\newtheorem{theorem}{Theorem}[section]
\newtheorem{lemma}[theorem]{Lemma}
\newtheorem{corollary}[theorem]{Corollary}
\theoremstyle{definition}
\newtheorem{assumption}[theorem]{Assumption}
\theoremstyle{remark}
\newtheorem{remark}[theorem]{Remark}
\numberwithin{equation}{section}
\DeclarePairedDelimiter\jump{\llbracket}{\rrbracket}
\def\tens#1{\pmb{\mathsf{#1}}}
\def\vec#1{\boldsymbol{#1}}
\def\R{\mathbb{R}}
\def\sym{\mathop{\mathrm{sym}}\nolimits}
\def\tr{\mathop{\mathrm{tr}}\nolimits}
\def\Rds{\mathbb{R}^{d \times d}_{\sym}}
\def\Rdst{\mathbb{R}^{d \times d}_{\sym,\tr}}
\def\supp{\mathop{\mathrm{supp}}\nolimits}
\def\macrostar{\mathop{\mathrm{macrostar}}\nolimits} 
\def\diver{\mathop{\mathrm{div}}\nolimits} 
\def\twodot{\,{:}\,}
\newcommand{\Cinfty}[3][\Omega]{C^{\infty}_{#2}(#1)^{#3}}
\newcommand{\Cinfd}{\Cinfty{0}{d}}
\newcommand{\Lebs}[4][\Omega]{L^{#2}_{#3}(#1)^{#4}}
\newcommand{\Lp}[1]{\Lebs{#1}{}{}}
\newcommand{\Lmean}[1]{\Lebs{#1}{0}{}}
\newcommand{\Lsym}[1]{\Lebs{#1}{\sym}{d \times d}}
\newcommand{\Lsymtr}[1]{\Lebs{#1}{\sym,\tr}{d \times d}}
\newcommand{\Sobs}[5][\Omega]{W^{#2, #3}_{#4}(#1)^{#5}}
\newcommand{\SobsH}[4][\Omega]{H^{#2}_{#3}(#1)^{#4}}
\newcommand{\Wonep}[2][d]{\Sobs{1}{#2}{}{#1}}
\newcommand{\Wmonep}[2][d]{\Sobs{-1}{#2}{}{#1}}
\newcommand{\Hone}[1][d]{\SobsH{1}{}{#1}}
\newcommand{\Hmone}[1][d]{\SobsH{-1}{}{#1}}
\newcommand{\Wonezdiv}[1]{\Sobs{1}{#1}{0,\diver}{d}}
\newcommand{\Wbdry}[4][\Gamma]{W^{1/#2',#2}_{#3}(#1)^{#4}}
\newcommand{\Hbdry}[3][\Gamma]{H^{1/2}_{#2}(#1)^{#3}}
\newcommand{\Wtrzero}[2][\Gamma]{\Wbdry[#1]{#2}{00}{}}
\newcommand{\Hhalfzero}[1][\Gamma]{\Hbdry[#1]{00}{}}
\def\be{\vec{e}}
\def\bu{\vec{u}}
\def\bv{\vec{v}}
\def\bw{\vec{w}}
\def\bz{\vec{z}}
\def\bsigma{\vec{\sigma}}
\def\btau{\vec{\tau}}
\def\BD{\tens{D}}
\def\BG{\tens{G}}
\def\BH{\tens{H}}
\def\BS{\tens{S}}
\def\Du{\BD(\bu)}
\def\Dcr{\bm{\mathcal{D}}}
\def\Scr{\bm{\mathcal{S}}}
\def\Pr{\mathrm{Pr}}
\def\Ra{\mathrm{Ra}}
\def\Re{\mathrm{Re}}
\def\Gr{\mathrm{Gr}}
\def\Di{\mathrm{Di}}
\def\Bn{\mathrm{Bn}}
\def\Pe{\mathrm{Pe}}
\def\Br{\mathrm{Br}}
\def\Vndiv{V^n_{\diver}}
\def\Signtr{\Sigma^n_{\tr}}
\newcommand{\txtb}[1]{{\color{black}#1}}
\newcommand{\txtbl}[1]{{\color{black}#1}}
\begin{document}

\title[FEM for Heat-conducting Implicit Fluids]{Finite Element Approximation and Preconditioning for Anisothermal  \txtb{Flow of} Implicitly-constituted Non-Newtonian \txtb{Fluids}}


\author[P.~E.~Farrell]{Patrick Farrell}
\address{Mathematical Institute, University of Oxford, Oxford OX2 6GG, UK}
\email{patrick.farrell@maths.ox.ac.uk}
\thanks{This research was supported by the Engineering and Physical Sciences Research Council grant EP/R029423/1, and by the EPSRC Centre for Doctoral Training in Partial Differential Equations: Analysis and Applications, grant EP/L015811/1. The second author was supported by CONACyT (Scholarship 438269)}

\author[P.~A.~Gazca~Orozco]{Pablo Alexei Gazca~Orozco}
\address{Mathematical Institute, University of Oxford, Oxford OX2 6GG, UK}
\curraddr{Department of Mathematics, FAU Erlangen-N\"{u}rnberg, 91058 Erlangen, Germany}
\email{alexei.gazca@math.fau.de}

\author[E.~S\"{u}li]{Endre S\"uli}
\address{Mathematical Institute, University of Oxford, Oxford OX2 6GG, UK}
\email{endre.suli@maths.ox.ac.uk}
\subjclass[2020]{Primary 65N30, 65F08; Secondary 65N55, 76A05}

\date{\today}

\dedicatory{}

\begin{abstract}
We devise 3-field and 4-field finite element approximations of a system describing the steady state of an incompressible heat-conducting fluid with implicit non-Newtonian rheology. We prove that the sequence of numerical approximations converges to a weak solution of the problem. We develop a block preconditioner based on augmented Lagrangian stabilisation for a discretisation based on the Scott--Vogelius finite element pair for the velocity and pressure. The preconditioner involves a specialised multigrid algorithm that makes use of a space decomposition that captures the kernel of the divergence and non-standard intergrid transfer operators. The preconditioner exhibits robust convergence behaviour when applied to the Navier--Stokes and power-law systems, including temperature-dependent viscosity, heat conductivity and viscous dissipation.
\end{abstract}

\maketitle


\section{Introduction}
For $d\in\{2,3\}$, let $\Omega\subset \mathbb{R}^d$ be a bounded polytopal domain with a Lipschitz boundary. The steady form of the Oberbeck--Boussinesq \cite{Oberbeck1879,Boussinesq1903} approximation used in the modelling of natural convection reads:
\begin{subequations}\label{eq:PDE_OB}
	\begin{alignat}{2}
		\txtb{-\diver (2\hat{\mu}(\theta)\Du)} + \rho_0 \diver (\bu\otimes\bu)
			&+ \nabla p = -\rho_0\beta g (\theta - \theta_C)\be_d
			\quad & &\text{ in }\Omega,\\
			\diver\bu &= 0\quad & &\text{ in }\Omega,\\
			-\diver (\hat{\kappa}(\theta)\nabla\theta)
			+ \rho_0 c_p \diver(\bu & \theta )
			+ \beta\rho_0 g \theta\bu\cdot \be_d
			= \txtb{2\hat{\mu} |\BD(\bu)|^2}\quad & &\text{ in }\Omega, \label{eq:PDE_OB_energy}
	\end{alignat}
\end{subequations}
	where $\be_d$ is the unit vector pointing against gravity, $\BD(\bu) = \frac{1}{2}(\nabla\bu + \nabla\bu^\top)$ denotes the symmetric gradient, and the quantities appearing in the equations are as follows:

\vspace{0.5cm}
\noindent
\begin{center}
\begin{tabular}{cc}
\toprule
$\bu\colon\Omega\to \R^d$ & velocity field\\
$p \colon \Omega \to \R$ & pressure\\
$\theta \colon \Omega \to \R$ & temperature\\
$\hat{\kappa} \colon \R \to \R$ & heat conductivity\\
$\hat{\mu} \colon \R \to \R$ & \txtb{viscosity}\\
\bottomrule
\end{tabular}
\hspace{0.1cm}
\begin{tabular}{cc}
\toprule
$\beta$ & thermal expansion coefficient\\
$c_p$ & specific heat capacity\\
$g$ & acceleration due to gravity\\
$\rho_0$ & reference density\\
$\theta_C$ & reference temperature\\
\bottomrule
\end{tabular}
\end{center}
\vspace{0.5cm}

\txtb{The system \eqref{eq:PDE_OB} assumes that the fluid in question behaves according to the Newtonian constitutive relation $\BS=2\hat{\mu}(\theta)\Du$, where $\BS\colon \Omega \to \Rdst$ is the deviatoric stress; here $\Rdst$ denotes the set of $d\times d$ symmetric and traceless matrices.} The system is supplemented with the boundary conditions
\begin{equation}\label{eq:bdry_conditions}
	\bu|_{\partial\Omega}= \bm{0}, \qquad \theta|_{\Gamma_D} = \theta_b,
	\qquad \hat{\kappa}(\theta)\nabla\theta \cdot\bm{n}|_{\partial\Omega \setminus \Gamma_D} = 0,
\end{equation}
where $\Gamma_D$ is a relatively open subset of $\partial\Omega$ with $|\Gamma_D|\neq 0$, $\bm{n}$ is the unit outward-pointing normal vector to the boundary, and $\theta_b$ is a given temperature distribution on $\Gamma_D$. In many applications the effects of viscous dissipation are ignored, i.e.\ only the first two terms in the temperature equation \eqref{eq:PDE_OB_energy} are kept. However, it has been observed that in some cases the effects of the viscous dissipation term $\BS\twodot\BD(\bu) = 2\hat{\mu}(\theta)|\Du|^2$ are non-negligible and should be taken into account \cite{Hewitt1975,Turcotte1974,Velarde1976,Ostrach1958}. Furthermore, as noted in \cite{Bayly1992,Turcotte1974}, the viscous dissipation must be balanced with the adiabatic heating term $\beta\rho_0 g \theta\bu\cdot\be_d$; for a mathematically rigorous derivation of the system \eqref{eq:PDE_OB} see \cite{Kagei2000}. The existence of distributional solutions of \eqref{eq:PDE_OB} with non-Newtonian rheology of power-law type was shown in \cite{Roubicek2001,Necas2001}.

{\color{black}
	A distinct way of modelling the effects of the temperature on the flow is to employ the following system describing a general (homogeneous) incompressible fluid \cite{Bulicek2009}:
\begin{subequations}\label{eq:PDE_Forced}
	\begin{alignat}{2}
			-\diver \BS + \rho_* \diver (\bu\otimes\bu)
			&+ \nabla p =  \rho_*\bm{f}
			\quad & &\text{ in }\Omega,\\
			\diver\bu = &\:0\quad & &\text{ in }\Omega,\\
			-\diver (\hat{\kappa}(\theta)\nabla\theta)
			+ \rho_* \diver(c_v&\bu  \theta )
			= \BS \twodot \BD(\bu)\quad & &\text{ in }\Omega, \label{eq:PDE_Forced_energy}
	\end{alignat}
\end{subequations}
where now $\rho_*$ is the (constant) density of the fluid, $c_v$ is the specific heat capacity at constant volume, and $\bm{f}$ is the body force.
	The systems \eqref{eq:PDE_OB} and \eqref{eq:PDE_Forced} have different origins; for instance, the temperature appearing in \eqref{eq:PDE_Forced} is an absolute temperature (and therefore strictly positive), while the one appearing in \eqref{eq:PDE_OB} is a perturbation with respect to a reference temperature. However, the two systems share sufficient common structure to render a unified numerical analysis possible. We note that \eqref{eq:PDE_OB} and \eqref{eq:PDE_Forced} could be seen as particular cases of the following system (here the physical constants have been set to unity):

\begin{subequations}\label{eq:PDE_unified}
	\begin{alignat}{2}
			-\diver \BS + \diver (\bu\otimes\bu)
			&+ \nabla p = \bm{f} -\alpha\theta\be_d
			\quad & &\text{ in }\Omega,\\
			\diver\bu &= 0\quad & &\text{ in }\Omega,\\
			-\diver (\hat{\kappa}(\theta)\nabla\theta)
			+   \diver(\bu & \theta )
			+ \alpha  \theta\bu\cdot \be_d
			= \BS \twodot \BD(\bu)\quad & &\text{ in }\Omega, \label{eq:PDE_unified_energy}
	\end{alignat}
\end{subequations}
where $\alpha\in \mathbb{R} $ is a parameter (when $\alpha=0$ one recovers the system \eqref{eq:PDE_Forced}). This
is the system we will study in this work.
}

The \txtb{systems \eqref{eq:PDE_Forced} and \eqref{eq:PDE_unified}} must be closed with a constitutive relation that relates the \txtb{deviatoric} stress $\BS$ and the symmetric velocity gradient $\BD(\bu)$\txtb{; one could for instance employ here the Newtonian relation $\BS=2\hat{\mu}(\theta)\BS$, as was done in \eqref{eq:PDE_OB}.} 
In this work we consider much more general implicit constitutive relations of the form $\BG(\BS,\BD(\bu),\theta) = \bm{0}$ and $\BH(\BS,\BD(\bu)) = \bm{0}$. The framework of implicitly constituted fluids is a generalisation of classical continuum mechanics that allows the study of a much wider class of materials in a thermodynamically consistent manner (see \cite{Rajagopal:2003,Rajagopal2006,Rajagopal2008}).

The first rigorous existence results within the implicitly constituted framework for the isothermal system can be found in \cite{Bulicek:2009,Bulicek:2012} (see also \cite{Blechta2019}), while an extension to a temperature-dependent system was carried out in \cite{Maringova2018}. Regarding the numerical approximation of these systems, only the isothermal case has been considered so far. The finite element approximation of the steady isothermal system was analysed in \cite{Diening:2013,refId0}, and extensions to the unsteady problem can be found in \cite{Sueli2018,Farrell2019}. An augmented Lagrangian preconditioner was proposed for a 3-field formulation of the isothermal system in \cite{FarrellGazca2019}.

Our analysis focuses on the constitutive relation defined by
\begin{equation}\label{eq:implicit_CR}
	\BG(\BS,\BD(\bu),\theta) := 2\hat{\mu}(\theta)\frac{(|\BD(\bu)|-\hat{\sigma}(\theta))^+}{|\BD(\bu)|}\BD(\bu)
	- \frac{(|\BS| - \hat{\tau}(\theta))^+}{|\BS|}\BS,
\end{equation}
where $\hat{\tau}\hat{\sigma} = 0$ (the precise assumptions on $\hat{\mu},\hat{\tau},\hat{\sigma}$ will be introduced later). The relation \eqref{eq:implicit_CR} describes a fluid with either Bingham or activated-Euler rheology in which the viscosity and activation parameters may depend on the temperature. Naturally, this family of constitutive relations also includes the Navier--Stokes model with a temperature-dependent viscosity (when $\hat{\tau}\equiv 0 \equiv \hat{\sigma}$). The relation \eqref{eq:implicit_CR} was introduced in \cite{Maringova2018}, where existence of weak solutions to the unsteady version of a similar system was shown. Our results will also cover relations with more general power-law behaviour if one restricts the system to have either an explicit constitutive relation or constant rheological parameters.

We make two main contributions in this work. First, we introduce a finite element approximation of the system \txtb{\eqref{eq:PDE_unified}} and prove convergence of the sequence of finite element approximations to a weak solution. This represents the first finite element convergence result for heat-conducting implicitly constituted fluids. For the sake of simplicity, we will neglect the viscous dissipation in the analysis. However, this can be included in the numerical algorithm without any difficulties. The main challenge associated with this term in the analysis stems from the fact that $\BS\twodot\BD(\bu)$ belongs \textit{a priori} to $L^1(\Omega)$ only,  and hence a suitable notion of renormalised solution must be employed for the temperature equation. We note that this difficulty has been circumvented in the PDE analysis of \txtb{system \eqref{eq:PDE_Forced}} in the transient case \cite{Bulicek2009,Bulicek2009a}. We would expect that by imposing certain restrictions on the mesh and for $\mathbb{P}_1$ elements, a similar convergence result would hold for an appropriately defined renormalised solution (c.f.\ \cite{Casado-Diaz}). When restricted to constant rheological parameters and the isothermal problem, the convergence result here improves on the result for $r$-graphs from \cite{Diening:2013} by extending it to cover the whole admissible range $r>\frac{2d}{d+2}$, even without pointwise divergence-free elements. This is possible by making use of reconstruction operators, which in recent years were introduced to restore the pressure-robustness in the finite element formulations (see e.g.\ \cite{John2017}).

The second main contribution is the development of a preconditioner based on an augmented Lagrangian approach for linearisations of the discretisation of \eqref{eq:PDE_OB}, including the viscous dissipation term. After Newton linearisation the system takes the following form
\begin{equation}\label{eq:block_structure}
    \begin{bmatrix}
    A & B^\top\\ B & 0
    \end{bmatrix}
    \begin{bmatrix}
    \bz \\ p
    \end{bmatrix}
    =
    \begin{bmatrix}
	    \bm{f} \\ g
    \end{bmatrix},
\end{equation}
where $\bz = (\theta,\bu)^\top$ or $\bz = (\BS,\theta,\bu)^\top$, depending on whether a 3-field or a 4-field formulation is employed, and $B$ represents the divergence operator acting on the velocity space. After performing Gaussian elimination on the blocks, the problem of solving \eqref{eq:block_structure} reduces to solving smaller systems involving $A$ and the Schur complement $S := -B A^{-1}B^\top$. In many cases, such as in a velocity-pressure formulation of the Stokes system, $A$ represents a symmetric and coercive operator which can be inverted efficiently, and so the challenge is to develop an effective and efficient approximation for the Schur complement inverse $\tilde{S}^{-1}$. For the Stokes system with constant viscosity $\nu$ it is known that the choice $\tilde{S}^{-1} = -\nu M_p^{-1}$, where $M_p$ is the pressure mass matrix, results in a spectrally equivalent preconditioner \cite{Silvester:1994,Mardal2011}. When the convective term is introduced to the formulation, the performance of this strategy degrades as the Reynolds number $\Re$ gets larger (meaning that the number of Krylov subspace iterations per nonlinear iteration grows with $\Re$) \cite{Elman2006}. This loss of robustness occurs also with other well-known preconditioners, such as the PCD \cite{Kay:2006} and LSC \cite{Elman2006} preconditioners (see e.g.\ \cite{Elman2014}). Block preconditioners based on PCD for the system \eqref{eq:PDE_OB} without viscous dissipation were proposed in \cite{Howle2012,Ke2017}, where it was observed that the number of linear iterations increased strongly with the Rayleigh number $\Ra$.

Alternatively, one can consider the system with an augmented Lagrangian term, with $\gamma > 0$:
\begin{equation}\label{eq:block_augmented}
\begin{bmatrix}
A + \gamma B^{\top}M_p^{-1}B & B^{\top}\\B &0
\end{bmatrix}
\begin{bmatrix}
\bz \\ p
\end{bmatrix}
=
\begin{bmatrix}
	\bm{f} + \gamma B^{\top}M_p^{-1} g \\ g
\end{bmatrix},
\end{equation}
which has the same solution as \eqref{eq:block_structure}, since $B\bz = g$. The advantage of this is that using the Sherman--Morrison--Woodbury formula (see e.g.\ \cite{Bacuta2006}), the Schur complement can be approximated in a straightforward way:
\begin{align*}
	S^{-1} &= (-B(A + \gamma B^{\top}M_p^{-1}B)^{-1}B^{\top})^{-1} = -(BA^{-1}B^{\top})^{-1} - \gamma M_p^{-1} \\
	       &     \approx -(\nu + \gamma)M_p^{-1},
\end{align*}
and the approximation gets better as $\gamma\to \infty$. The difficulty now becomes solving the linear system associated with top block $A + \gamma B^\top M_p^{-1}B$ efficiently, since the augmented Lagrangian term possesses a large kernel (the set of all discretely divergence-free velocities). This approach was used for the 2D Navier--Stokes system by Benzi and Olshanskii \cite{Benzi2006} and later extended to three dimensions by Farrell, Mitchell and Wechsung \cite{Farrell2019a}. The strategy for efficiently solving the top block in these works was based on ideas developed by Sch\"{o}berl in the context of nearly incompressible elasticity \cite{Schoeberl1999,Schoeberl:1999}, where it became clear that constructing robust relaxation and transfer operators is essential for obtaining a $\gamma$-robust multigrid algorithm.

These ideas will be applied here to develop a preconditioner for the \txtbl{anisothermal} system based on a discretisation using the Scott--Vogelius pair for the velocity and pressure, which has the advantage of preserving the divergence constraint exactly (to machine precision and solver tolerances). This builds on previous work for the Navier--Stokes system \cite{Farrell2020} and a stress-velocity-pressure formulation for isothermal \txtb{flow of} non-Newtonian fluids with implicit rheology \cite{FarrellGazca2019}. An augmented Lagrangian-based preconditioner (AL) for buoyancy-driven flow was already presented in \cite{Ke2018} for a stabilised $\mathbb{P}_1$--$\mathbb{P}_1$ velocity-pressure pair, in which the augmented velocity block was substituted by $A + \gamma B^\top \mathrm{diag}(M_p)^{-1}B$ and handled by GMRES preconditioned with algebraic multigrid; in that work it was shown that the AL preconditioner performed better than non-augmented variants, at least for Prandtl and Rayleigh numbers in the ranges $0.04\leq \Pr \leq 1$, $500\leq \Ra \leq 10000$. Numerical experiments with the preconditioner will show good performance with the Navier--Stokes and power-law models for a wider range of non-dimensional numbers, even with temperature-dependent viscosity, heat conductivity, and viscous dissipation. It is remarkable that the robustness properties of the preconditioner hold in these cases, given that the available parameter-robust multigrid theory pioneered by Sch\"oberl does not apply, since the block $A$ is non-symmetric and \txtb{possibly} non-coercive.

\section{Preliminaries}
\subsection{Function spaces}
Throughout this work we will employ standard notation for Sobolev and Lebesgue spaces (e.g.\ $(W^{k,s}(\Omega),\|\cdot\|_{W^{k,s}(\Omega)})$ and $(\Lp{q},\|\cdot\|_{\Lp{q}})$). The space $W^{k,r}_0(\Omega)$, for $r\in[1,\infty)$, is defined as the closure of the space of smooth functions with compact support $C_0^\infty(\Omega)$ with respect to $\|\cdot\|_{k,r}$; its dual space will be denoted by $W^{-1,r'}(\Omega)$, where $r'$ is the H\"{o}lder-conjugate of the number $r$, i.e.\ $1/r' + 1/r = 1$. When $r=2$ we will write $W^{k,2}(\Omega) = H^{k}(\Omega)$ and $\Wmonep[]{2} = \Hmone[]$. Let us also define the following useful subspaces for $r>1$ and $\Gamma \subset \partial\Omega$:
\begingroup
\allowdisplaybreaks
\begin{gather*}
	\Lmean{r} := \left\{q \in L^r(\Omega) \colon \int_\Omega q = 0 \right\},\\
	\Wonezdiv{r} := \overline{\{\bv \in \Cinfd\, :\, \diver\bv =0\}}^{\|\cdot\|_{W^{1,r}(\Omega)}},\\
	\Sobs{1}{r}{\Gamma}{} := \overline{\{w \in C^\infty(\Omega)\, :\, w|_{\Gamma}=0 \}}^{\|\cdot\|_{W^{1,r}(\Omega)}},\\
\Lsym{r} := \{\btau \in {\Lp{r}}^{d\times d}\colon \btau^\top = \btau \},\\
\Lsymtr{r} := \{\btau \in \Lsym{r} \colon \tr{\btau} = 0 \},\\
\Wtrzero{r} := \{w|_{\Gamma} \colon w\in \Wonep[]{r},\, w=0\text{ on }\partial\Omega\setminus \overline{\Gamma}   \}.
\end{gather*}
\endgroup
The operator $\tr$ in the definition of $\Lsymtr{r}$ denotes the trace of a $d\times d$ matrix. The letter $c$ will be used in various estimates to denote a generic positive constant whose value might change from line to line (the dependence on the parameters will be made explicit whenever necessary).

\subsection{Implicit constitutive relations} \label{sec:implcr}
Let us assume now that the material parameters $\hat{\mu},\hat{\tau},\hat{\sigma},\hat{\kappa}$ are continuous functions of one variable such that
\begin{equation}
\begin{gathered}\label{eq:assumptions_materialparameters}
	0 \leq \hat{\tau}(s),\hat{\sigma}(s) \leq c_0,\\
	c_1 \leq \hat{\mu}(s),\hat{\kappa}(s) \leq c_2,\\
	\hat{\tau}(s)\hat{\sigma}(s) =0,
\end{gathered}
\end{equation}
for all $s\in\R$, and some positive constants $c_0,c_1,c_2$. It is not difficult to show that under these assumptions, the relation \eqref{eq:implicit_CR} defines a monotone and coercive 2-graph \cite[Lemma 3]{Maringova2018}.

\begin{lemma}\label{lm:monotonicity_coercivity_temp}
	Let $\BG\colon \Rds \times \Rds \times \R \to \R$ be the function defined by \eqref{eq:implicit_CR} and suppose that $\hat{\mu},\hat{\tau},\hat{\sigma}\in C(\R)$ satisfy \eqref{eq:assumptions_materialparameters}. Then there exist two constants $\alpha,\beta>0$ such that
	\begin{equation}\label{eq:monotonicity_temp}
		\BS\twodot\BD \geq \alpha (|\BS|^2 + |\BD|^2) - \beta
	\end{equation}
	\begin{equation}
		(\BS - \overline{\BS})\twodot (\BD - \overline{\BD}) \geq 0,
	\end{equation}
	for any $(\BS,\BD,\theta),(\overline{\BS},\overline{\BD},\theta)\in \Rds\times\Rds\times \R$ such that $\BG(\BS,\BD,\theta)=\bm{0}=\BG(\overline{\BS},\overline{\BD},\theta)$.
\end{lemma}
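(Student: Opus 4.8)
The plan is to exploit the condition $\hat{\tau}\hat{\sigma}=0$, which forces the relation into one of two regimes, and to reduce both inequalities to scalar statements about $|\BS|$ and $|\BD|$ after first establishing that the two tensors are colinear. First I would fix $\theta$ (so that $\hat{\mu},\hat{\tau},\hat{\sigma}$ become constants) and rewrite $\BG(\BS,\BD,\theta)=\bm{0}$ as
\begin{equation*}
2\hat{\mu}\,(|\BD|-\hat{\sigma})^+\frac{\BD}{|\BD|} = (|\BS|-\hat{\tau})^+\frac{\BS}{|\BS|},
\end{equation*}
with the usual convention that each fraction vanishes when its tensor argument is zero. Since $\hat{\mu}>0$ and both cut-off factors are nonnegative, whenever $\BS$ and $\BD$ are simultaneously nonzero they must point in the same direction, so $\BS\twodot\BD = |\BS|\,|\BD|$; this identity holds trivially when either tensor vanishes. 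Because $\hat{\tau}\hat{\sigma}=0$, at the fixed value of $\theta$ either $\hat{\sigma}=0$ or $\hat{\tau}=0$. In the first (Bingham) case the relation reduces to $|\BS| = 2\hat{\mu}|\BD|+\hat{\tau}$ when $\BD\neq\bm{0}$, together with $|\BS|\leq\hat{\tau}$ when $\BD=\bm{0}$; the second (activated-Euler) case is entirely symmetric with the roles of $\BS$ and $\BD$ interchanged. I would carry out the remaining argument in the Bingham case and invoke symmetry for the other.

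For coercivity I would use $\BS\twodot\BD = |\BS|\,|\BD| = (2\hat{\mu}|\BD|+\hat{\tau})|\BD| \geq 2c_1|\BD|^2$ together with the upper bound $|\BS|^2 = (2\hat{\mu}|\BD|+\hat{\tau})^2 \leq 8c_2^2|\BD|^2 + 2c_0^2$, so that $|\BS|^2+|\BD|^2 \leq (8c_2^2+1)|\BD|^2 + 2c_0^2$. Choosing $\alpha$ small enough that $\alpha(8c_2^2+1)\leq c_1$ and then $\beta\geq 2\alpha c_0^2$ yields $\BS\twodot\BD \geq \alpha(|\BS|^2+|\BD|^2)-\beta$; the degenerate case $\BD=\bm{0}$, $|\BS|\leq c_0$ is absorbed by the same choice of $\beta$.

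For monotonicity, expanding the product and applying the Cauchy--Schwarz inequality to the two mixed terms gives
\begin{equation*}
(\BS-\overline{\BS})\twodot(\BD-\overline{\BD}) \geq |\BS|\,|\BD| + |\overline{\BS}|\,|\overline{\BD}| - |\BS|\,|\overline{\BD}| - |\overline{\BS}|\,|\BD| = \bigl(|\BS|-|\overline{\BS}|\bigr)\bigl(|\BD|-|\overline{\BD}|\bigr),
\end{equation*}
where the colinearity identity was used on the two diagonal terms. It then remains to check that the scalar graph sending $|\BD|$ to $|\BS|$ is nondecreasing: on $|\BD|>0$ it equals $2\hat{\mu}|\BD|+\hat{\tau}$, while at $|\BD|=0$ it may take any value in $[0,\hat{\tau}]$, and the two branches match monotonically since the top of the lower branch is $\hat{\tau}$. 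Hence $(|\BS|-|\overline{\BS}|)(|\BD|-|\overline{\BD}|)\geq 0$. I expect the main obstacle to be purely bookkeeping: verifying the colinearity identity and the scalar comonotonicity across all the degenerate configurations in which one or both tensors vanish, and keeping track of which regime $\theta$ selects. Crucially, monotonicity genuinely requires the two states to share the same $\theta$, since otherwise $\hat{\mu},\hat{\tau},\hat{\sigma}$ would differ between the two branches and the scalar comparison would break down.
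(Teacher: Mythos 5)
Your proof is correct, and it follows what is essentially the only natural route: use $\hat{\tau}\hat{\sigma}=0$ to split into the Bingham and activated-Euler regimes, establish colinearity of $\BS$ and $\BD$ so that $\BS\twodot\BD=|\BS|\,|\BD|$, and reduce both inequalities to statements about the scalar monotone graph $|\BD|\mapsto|\BS|$ (respectively $|\BS|\mapsto|\BD|$). The paper itself does not prove this lemma; it defers entirely to Lemma 3 of Maringov\'a and \v{Z}abensk\'y, whose argument proceeds along the same lines, so there is nothing substantive to contrast. One small point to tighten: as written, your justification of colinearity (``since $\hat{\mu}>0$ and both cut-off factors are nonnegative'') is not by itself sufficient, because both truncations could in principle vanish with both tensors nonzero; it is only after invoking $\hat{\tau}\hat{\sigma}=0$ that one of the two cut-offs is guaranteed to be active whenever the corresponding tensor is nonzero, and your subsequent regime-by-regime analysis does supply exactly this, so the gap is presentational rather than mathematical.
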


In the same spirit as \cite{Diening:2013,Farrell2019}, in the numerical scheme we will employ a sequence of continuous explicit approximations of the implicit constitutive relation \eqref{eq:implicit_CR}. Let us define for $n\in\mathbb{N}$ the approximations as follows:
\begin{equation}
	\begin{split}\label{eq:approximate_constrel}
	\Dcr^n(\BS,\theta) &:= \min \left\{n + \frac{1}{2\hat{\mu}(\theta)},
	\frac{\frac{1}{2\hat{\mu}(\theta)} (|\BS| - \hat{\tau}(\theta))^+ + \hat{\sigma}(\theta)}{|\BS|}\right\},\\
	\Scr^n(\BD,\theta) &:= \min \left\{n + 2\hat{\mu}(\theta),
	\frac{2\hat{\mu}(\theta) (|\BD| - \hat{\sigma}(\theta))^+ + \hat{\tau}(\theta)}{|\BD|}\right\}.
	\end{split}
\end{equation}
Either of the two can be chosen, depending on whether one wishes to consider explicit approximations of the \txtb{deviatoric} stress in terms of the symmetric velocity gradient or vice-versa. The functions $\Dcr^n$ and $\Scr^n$ satisfy the same monotonicity and coercivity conditions as those stated in Lemma \ref{lm:monotonicity_coercivity_temp}, uniformly in $n$. More importantly, the following localised Minty's lemma is available for these approximations, which will be useful when proving that the limit of the numerical approximations satisfies the constitutive relation.

\begin{lemma}[\cite{Maringova2018}, Lemma 6]\label{lm:minty}
	Let $M\subset \Omega$ be measurable and let $\BG$ be defined by \eqref{eq:implicit_CR}. Now suppose that $\{\BD^n\}_{\mathbb{N}}$ and $\{\theta^n\}_{\mathbb{N}}$ are sequences of measurable functions on $\Omega$ and let $\BS^n:= \Scr^n(\BD^n,\theta^n)$. Assume that the following conditions hold:
	\begin{align*}
		\Scr^n(\BD^n,&\theta^n) = \bm{0} && \text{a.e. in } M,\\
		\BS^n &\rightharpoonup \BS  && \text{weakly in }L^2(M)^{d\times d},\\
		\BD^n &\rightharpoonup \BD && \text{weakly in }L^2(M)^{d\times d},\\
\theta^n &\rightharpoonup \theta && \text{a.e. in }M,\\
\limsup_{n\to\infty} \int_M \BS^n  \twodot &\BD^n \leq \int_M \BS\twodot \BD. &&
	\end{align*}
Then $\BG(\BS,\BD,\theta)=\bm{0}$ and $\BS^n\twodot\BD^n \rightharpoonup \BS\twodot\BD$ weakly in $L^1(M)$. An analogous statement holds for $\Dcr^n$.
\end{lemma}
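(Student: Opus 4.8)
The plan is to run Minty's monotonicity trick, adapted to the truncated, temperature-dependent graphs defined by $\Scr^n$. I read the approximate relation $\BS^n=\Scr^n(\BD^n,\theta^n)$ as the statement that, for a.e.\ $x\in M$, the pair $(\BD^n(x),\BS^n(x))$ lies on the single-valued, maximal monotone graph $\mathcal{A}^n_{\theta^n(x)}:=\{(\BD,\BS) : \BS=\Scr^n(\BD,\theta^n(x))\}$, and I write $\mathcal{A}_s:=\{(\BD,\BS) : \BG(\BS,\BD,s)=\bm{0}\}$ for the limit graph, which is maximal monotone by Lemma~\ref{lm:monotonicity_coercivity_temp}. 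Both assertions will be read off from a single integrated monotonicity inequality.

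First I would fix an admissible limit test pair, i.e.\ bounded measurable functions $(\overline{\BS},\overline{\BD})$ with $\BG(\overline{\BS},\overline{\BD},\theta)=\bm{0}$ a.e., and approximate it by a pair $(\overline{\BD}^n,\overline{\BS}^n)\in\mathcal{A}^n_{\theta^n}$ converging strongly in $L^2(M)$ to $(\overline{\BD},\overline{\BS})$. On the single-valued branch (where $\overline{\BD}\neq\bm{0}$) this is immediate: take $\overline{\BD}^n=\overline{\BD}$ and $\overline{\BS}^n=\Scr^n(\overline{\BD},\theta^n)$, which converges a.e.\ and dominatedly by continuity of $\hat\mu,\hat\tau,\hat\sigma$ and the a.e.\ convergence of $\theta^n$. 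The uniform-in-$n$ monotonicity of the approximations then gives, after integration,
\begin{equation*}
\int_M (\BS^n-\overline{\BS}^n)\twodot(\BD^n-\overline{\BD}^n)\ge 0,
\end{equation*}
and passing to the $\limsup$ — controlling the energy term by the last hypothesis and the remaining terms by the weak convergences of $\BS^n,\BD^n$ against the strong convergence of $(\overline{\BS}^n,\overline{\BD}^n)$ — yields
\begin{equation*}
\int_M (\BS-\overline{\BS})\twodot(\BD-\overline{\BD})\ge 0.
\end{equation*}

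The step I expect to be the main obstacle is the multivaluedness of $\mathcal{A}_s$ on the yield set $\{\overline{\BD}=\bm{0},\,|\overline{\BS}|\le\hat\tau\}$, which intervenes twice. First, such test points are not limits of the single-valued branch, so to keep the displayed inequality valid for them I would exploit the truncation: for a target $(\bm{0},\overline{\BS})$ with $|\overline{\BS}|<\hat\tau$ the choice $\overline{\BD}^n:=\overline{\BS}/(n+2\hat\mu(\theta^n))\to\bm{0}$ makes the first argument of the $\min$ in $\Scr^n$ active for large $n$, whence $\overline{\BS}^n=(n+2\hat\mu(\theta^n))\overline{\BD}^n=\overline{\BS}$; this recovers the strong convergence, and hence the inequality, on the yield set as well. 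Second, to pass from the integral inequality (now valid for every measurable selection of $x\mapsto\mathcal{A}_{\theta(x)}$) to the pointwise relation I would argue by maximality: were the set $E:=\{x:(\BD(x),\BS(x))\notin\mathcal{A}_{\theta(x)}\}$ of positive measure, then for a.e.\ $x\in E$ maximal monotonicity supplies a graph point $(\BD_\ast(x),\BS_\ast(x))\in\mathcal{A}_{\theta(x)}$ with $(\BS(x)-\BS_\ast(x))\twodot(\BD(x)-\BD_\ast(x))<0$; a measurable selection theorem provides an $L^2$ selection $(\BD_\ast,\BS_\ast)$ (integrability guaranteed by the coercivity and boundedness bounds of Lemma~\ref{lm:monotonicity_coercivity_temp}), and extending by $(\BD,\BS)$ off $E$ contradicts the inequality. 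Hence $E$ is null, giving $\BG(\BS,\BD,\theta)=\bm{0}$ a.e.

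For the weak $L^1$ convergence, knowing that the limit lies on $\mathcal{A}_\theta$ I would derive a matching lower bound by reusing the approximating pair $\overline{\BD}^n=\BD$, $\overline{\BS}^n=\Scr^n(\BD,\theta^n)$: on $\{\BD\neq\bm{0}\}$ this converges strongly to $(\BD,\BS)$ as above, while on $\{\BD=\bm{0}\}$ the relevant products vanish, so monotonicity yields $\liminf_n\int_{M'}\BS^n\twodot\BD^n\ge\int_{M'}\BS\twodot\BD$ on every measurable $M'\subseteq M$. Applying this bound also to the complement $M\setminus M'$ and combining with the global upper hypothesis forces $\int_{M'}\BS^n\twodot\BD^n\to\int_{M'}\BS\twodot\BD$ for every such $M'$. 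Since $\BS^n\twodot\BD^n\ge\alpha(|\BS^n|^2+|\BD^n|^2)-\beta\ge-\beta$ is bounded below, the Vitali--Hahn--Saks theorem converts this convergence of set integrals into equi-integrability of $\{\BS^n\twodot\BD^n\}$, and Dunford--Pettis then delivers $\BS^n\twodot\BD^n\rightharpoonup\BS\twodot\BD$ in $L^1(M)$. The statement for $\Dcr^n$ follows verbatim after interchanging the roles of $\BS$ and $\BD$.
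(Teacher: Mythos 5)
The paper does not prove this lemma: it is imported verbatim from \cite{Maringova2018} (Lemma 6), so there is no in-paper argument to compare against. Your sketch is the standard Minty monotonicity trick for temperature-dependent truncated graphs, which is also the route taken in the cited source, and I believe it is essentially correct: the integrated monotonicity inequality against strongly convergent graph approximants, the maximality/measurable-selection contradiction, and the upgrade of the energy identity on arbitrary measurable subsets to weak $L^1$ convergence of the product all go through. Three points deserve tightening. First, you attribute maximality of the limit graph $\mathcal{A}_s$ to Lemma \ref{lm:monotonicity_coercivity_temp}, but that lemma only asserts monotonicity and coercivity; maximality of the graph induced by \eqref{eq:implicit_CR} is true (for fixed $s$ it is the graph of a subdifferential in the Bingham case and of a continuous monotone map in the activated-Euler case) but must be checked or cited separately. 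Second, your yield-set approximation excludes the boundary case $|\overline{\BS}|=\hat\tau(\theta)$; in fact the same ansatz $\overline{\BD}^n=\overline{\BS}/(n+2\hat\mu(\theta^n))$ still yields $\overline{\BS}^n\to\overline{\BS}$ there (whichever branch of the $\min$ is active, the limit is $\hat\tau(\theta)\overline{\BS}/|\overline{\BS}|=\overline{\BS}$), so no separate limiting argument is needed, but you should say so. Third, the contradiction selection $(\BD_\ast,\BS_\ast)$ should be taken as the Minty resolvent point $\BD_\ast+\BS_\ast=\BD+\BS$, which is automatically in $L^2$ and measurable, since your integrated inequality must then be verified for $L^2$ rather than bounded selections (your approximant construction does support this, as the dominating functions are of the form $c|\overline{\BD}|+c$ and $c|\overline{\BS}|+c$). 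Finally, a small simplification: once $h^n:=(\BS^n-\overline{\BS}^n)\twodot(\BD^n-\BD)\ge 0$ is known to satisfy $\int_M h^n\to 0$, it converges strongly in $L^1(M)$, and writing $\BS^n\twodot\BD^n=h^n+\BS^n\twodot\BD+\overline{\BS}^n\twodot\BD^n-\overline{\BS}^n\twodot\BD$ gives the weak $L^1$ convergence directly from weak--strong pairings, without invoking Vitali--Hahn--Saks and Dunford--Pettis.
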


If the rheological parameters are constant (i.e.\ do not depend on the temperature), it is possible to generalise the convergence result to cover implicit relations of the form $\BH(\cdot,\BS,\BD(\bu)) = \bm{0}$, where $\BH\colon \Omega\times \Rds\times \Rds\to \Rds$, that satisfy the coercivity condition \eqref{eq:monotonicity_temp} with an exponent other than 2; this would for instance capture the Herschel--Bulkley constitutive relation. For convenience, the assumptions will be written in terms of the graph induced by $\BH$, which is defined in the standard way:
\begin{equation*}
	(\BD,\BS)\in \mathcal{A}(\cdot) \Longleftrightarrow \BH(\cdot,\BS,\BD) = \bm{0}.
\end{equation*}

\begin{assumption}\label{as:rgraph}
	The graph $\mathcal{A}$ is a maximal monotone $r$-graph for some $r>\frac{2d}{d+2}$. More precisely, the following properties hold for almost every $x\in\Omega$:
\begin{itemize}[leftmargin = 0.8cm]
	\item ($\mathcal{A}$ contains the origin). $(\bm{0},\bm{0})\in \mathcal{A}(x)$;
	\item ($\mathcal{A}$ is a monotone graph). For every $(\BD_1,\BS_1),(\BD_2,\BS_2)\in\mathcal{A}(x)$,
		\begin{equation*}
			(\BS_1 - \BS_2)\twodot(\BD_1 - \BD_2) \geq 0;
		\end{equation*}
	\item ($\mathcal{A}$ is maximal monotone). If $(\BD,\BS)\in \Rds\times \Rds$ is such that
		\begin{equation*}
			(\hat{\BS}-\BS)\twodot(\hat{\BD}-\BD)\geq 0
			\quad \text{for all }(\hat{\BD},\hat{\BS})\in\mathcal{A}(x),
		\end{equation*}
		then $(\BD,\BS)\in\mathcal{A}(x)$;
	\item  ($\mathcal{A}$ is an $r$-graph). There is a non-negative function $m\in L^1(\Omega)$ and a constant $c>0$ such that
		\begin{equation*}
			\BS\colon \BD \geq -m + c(|\BD|^r + |\BS|^{r'})\quad \text{for all }(\BD,\BS)\in \mathcal{A}(x);
		\end{equation*}
	\item (Measurability). The set-valued map $x\mapsto \mathcal{A}$ is $\mathcal{L}(\Omega)$--$(\mathcal{B}(\Rds)\otimes \mathcal{B}(\Rds))$ measurable; here $\mathcal{L}(\Omega)$ denotes the family of Lebesgue measurable subsets of $\Omega$ and $\mathcal{B}$ is the family of Borel subsets of $\Rds$;
	\item (Compatibility). For any $(\BD,\BS)\in\mathcal{A}(x)$ we have that
		\begin{equation*}
			\tr(\BD)=0\Longleftrightarrow \tr(\BS)=0.
		\end{equation*}
\end{itemize}
\end{assumption}

\subsection{Finite element spaces}\label{sc:FEM_spaces}
Let $\{\mathcal{T}_n\}_{n\in \mathbb{N}}$ be a family of shape-regular triangulations such that the mesh size $h_n := \max_{K\in \mathcal{T}_n} h_K$ tends to zero as $n\to \infty$, where $h_K$ denotes the diameter of an element $K\in\mathcal{T}_n$. We define the following conforming families of finite element spaces:
\begin{align*}
	\Sigma^n &:= \left\{\bsigma \in \Lsym{\infty} \colon \bsigma|_K \in \mathbb{P}_{\mathbb{S}}(K)^{d\times d},\, K\in\mathcal{T}_n \right\}, \\
V^n &:= \left\{\bv \in \Sobs{1}{\infty}{0}{d} \colon \bv|_K \in \mathbb{P}_{\mathbb{V}}(K)^d,\, K\in\mathcal{T}_n \right\}, \\
M^n &:= \left\{q \in \Lp{\infty} \colon q|_K \in \mathbb{P}_{\mathbb{M}}(K),\, K\in\mathcal{T}_n \right\}, \\
U^n &:= \left\{w \in \Sobs{1}{\infty}{\Gamma_D}{} \colon w|_K \in \mathbb{P}_{\mathbb{U}}(K),\, K\in\mathcal{T}_n \right\},
\end{align*}
where $\mathbb{P}_{\mathbb{S}}(K),\mathbb{P}_{\mathbb{V}}(K),\mathbb{P}_{\mathbb{M}}(K),\mathbb{P}_{\mathbb{U}}(K)$ are spaces of polynomials on the element $K\in \mathcal{T}_n$. It will be convenient to define the following subspaces:
\begin{gather*}
	M_0^n := M^n \cap \Lmean{2},
	\quad \Signtr := \Sigma^n \cap L^2_{\sym,\mathrm{tr}}(\Omega)^{d\times d}, \\
	\Vndiv := \left\{\bv \in V^n \colon \int_\Omega q\diver \bv = 0\quad \forall q\in M^n \right\}.
\end{gather*}

\begin{assumption}[Approximability]\label{as:approximability}
For every $s\in [1,\infty)$ we have that
\begin{align*}
	\inf_{\overline{\bm{v}}\in V^n}\|\bm{v}-\overline{\bm{v}}\|_{W^{1,s}(\Omega)} &\rightarrow 0 \quad \text{ as }n\rightarrow\infty\quad \forall\,\bm{v}\in W^{1,s}_0(\Omega)^d,\\
	\inf_{\overline{q}\in M^n}\|q-\overline{q}\|_{L^{s}(\Omega)} &\rightarrow 0 \quad \text{ as }n\rightarrow\infty\quad \forall\,q\in L^s(\Omega),\\
	\inf_{\overline{\bm{\sigma}}\in \Sigma^n}\|\bm{\sigma}-\overline{\bm{\sigma}}\|_{L^{s}(\Omega)} &\rightarrow 0 \quad \text{ as }n\rightarrow\infty\quad \forall\,\bm{\sigma}\in L^s(\Omega)^{d\times d},\\
	\inf_{\overline{w}\in U^n}\|w-\overline{w}\|_{W^{1,s}(\Omega)} &\rightarrow 0 \quad \text{ as }n\rightarrow\infty\quad \forall\,w\in W^{1,s}_{\Gamma_D}(\Omega).
\end{align*}
\end{assumption}
\begin{assumption}[Fortin Projector $\Pi^n_\Sigma$]\label{as:Projector_Stress}
For each $n\in\mathbb{N}$ there is a linear projector $\Pi^n_\Sigma \colon L_{\sym}^1(\Omega)^{d\times d}\to \Sigma^n$ such that:
\begin{itemize}[leftmargin = 0.8cm]
	\item (Preservation of divergence). For any $\bm{\sigma}\in \Lsym{1}$ we have that
\begin{equation*}
\int_\Omega \bm{\sigma}:\BD(\bv) = \int_\Omega \Pi^n_\Sigma(\bm{\sigma}):\BD(\bm{v}) \quad \forall\, \bm{v}\in \Vndiv.
\end{equation*}
\item ($L^{s}$--stability). For every $s\in (1,\infty)$ there is a constant $c>0$, independent of $n$, such that:
\begin{equation*}
	\|\Pi^n_\Sigma \bm{\sigma}\|_{L^{s}(\Omega)} \leq c \|\bm{\sigma}\|_{L^{s}(\Omega)}\qquad \forall\, \bm{\sigma}\in \Lsym{s}.
\end{equation*}
\end{itemize}
\end{assumption}
\begin{assumption}[Fortin Projector $\Pi^n_V$]\label{as:Projector_Velocity}
For each $n\in\mathbb{N}$ there is a linear projector $\Pi^n_V :W^{1,1}_0(\Omega)^d\rightarrow V^n$ such that the following properties hold:
\begin{itemize}[leftmargin = 0.8cm]
\item (Preservation of divergence). For any $\bm{v}\in W^{1,1}_0(\Omega)^d$ we have that
\begin{equation*}
\int_\Omega q\,\diver\bv = \int_\Omega q\,\diver(\Pi^n_V\bv) \quad\, \forall\, q\in M^n.
\end{equation*}
\item ($W^{1,s}$--stability). For every $s\in (1,\infty)$ there is a constant $c>0$, independent of $n$, such that:
\begin{equation*}
\|\Pi^n_V\bm{v}\|_{W^{1,s}(\Omega)} \leq c \|\bm{v}\|_{W^{1,s}(\Omega)}\qquad \forall \,\bm{v}\in W^{1,s}_0(\Omega)^d.
\end{equation*}
\end{itemize}
\end{assumption}
\begin{assumption}[Projectors $\Pi^n_M, \Pi^n_U$]\label{as:Projectors_PressureTemp}
	For each $n\in\mathbb{N}$ there is a linear projector $\Pi^n_M :L^1(\Omega)\rightarrow M^n$ and a linear projector $\Pi^n_U\colon W^{1,1}_{\Gamma_D}(\Omega) \to U^n$ such that for all $s\in (1,\infty)$ there is a constant $c>0$, independent of $n$, such that:
\begin{align*}
	\|\Pi^n_M q\|_{L^{s}(\Omega)} &\leq c \|q\|_{L^{s}(\Omega)} && \forall\, q\in L^s(\Omega),\\
	\|\Pi^n_U w\|_{L^{s}(\Omega)} &\leq c \|w\|_{W^{1,s}(\Omega)} && \forall\, w\in W^{1,s}_{\Gamma_D}(\Omega).
\end{align*}
\end{assumption}

The stability and approximability assumptions above imply immediately that for any $s\in[1,\infty)$ we have:
\begin{align}\label{eq:fem_projector_convergence}
	\begin{split}
		\|\bm{\sigma} - \Pi^n_\Sigma\bm{\sigma}\|_{L^{s}(\Omega)} & \rightarrow 0\quad \text{ as }n\rightarrow\infty\quad \forall\, \bm{\sigma}\in L_\text{sym}^s(\Omega)^{d\times d},\\
		\|\bm{v} - \Pi^n_V\bm{v}\|_{W^{1,s}(\Omega)} &\rightarrow 0\quad \text{ as }n\rightarrow\infty\quad \forall \,\bm{v}\in W_0^{1,s}(\Omega)^{d},\\
		\|q - \Pi^n_Mq\|_{L^{s}(\Omega)} &\rightarrow 0\quad \text{ as }n\rightarrow\infty\quad \forall\, q\in L^{s}(\Omega),\\
		\|w - \Pi^n_Uw\|_{W^{1,s}(\Omega)} &\rightarrow 0\quad \text{ as }n\rightarrow\infty\quad \forall\, w\in W^{1,s}_{\Gamma_D}(\Omega).
\end{split}
\end{align}
In addition, the assumptions guarantee that the velocity-pressure and stress-velocity pairs are inf-sup stable: for any $s\in (1,\infty)$ there are two constants $\beta_s,\gamma_s>0$, independent of $n$, such that the following inf-sup conditions are satisfied:
\begin{align}
	\adjustlimits \inf_{q\in M^n\setminus\{0\}}\sup_{\bv\in V^n\setminus\{0\}} \frac{\int_\Omega q\,\diver\bv}{\|\bv\|_{W^{1,s}(\Omega)}\|q\|_{L^{s'}(\Omega)}} \geq \beta_s,\label{eq:infsupvel}\\
	\adjustlimits\inf_{\bv\in V^n_{\diver}\setminus\{0\}}\sup_{\btau\in \Sigma^n_{\sym}\setminus\{0\}}
\frac{\int_\Omega \bm{\tau}\twodot\BD(\bm{v})}{\|\bm{\tau}\|_{L^{s'}(\Omega)}\|\bm{v}\|_{W^{1,s}(\Omega)}} \geq \gamma_s.\label{eq:infsupstress}
\end{align}

In the literature there are several well-known examples of velocity-pressure pairs $V^n$--$M^n$ that satisfy the approximability and stability assumptions above. They include, among others, the MINI element, the Taylor--Hood element $\mathbb{P}_k$--$\mathbb{P}_{k-1}$, and the conforming Crouzeix--Raviart element (see e.g.\ \cite{Boffi2013,Girault1986,Crouzeix1973}). The Scott--Vogelius pair $\mathbb{P}_k$--$\mathbb{P}_{k-1}^{\mathrm{disc}}$ is another example that in addition has the remarkable property that discretely divergence-free functions are also pointwise divergence-free \cite{Scott1985}. This element can be shown to be inf-sup stable for instance on barycentrically refined meshes \cite{Qin1994,Zhang2005}, and the preconditioner to be introduced in Section \ref{sec:AL_preconditioner} will be based on a discretisation using this pair. As for the stress variable, if the velocity space consists of continuous piecewise polynomials of degree $k$ (as is the case of the Scott--Vogelius element), then a space satisfying Assumption \ref{as:Projector_Stress} is \cite{Farrell2019}:
\begin{equation}\label{eq:space_discrete_stresses}
	\Sigma^n = \{\bm{\sigma}\in \Lsym{\infty} \, :\, \bm{\sigma}|_K\in\mathbb{P}_{k-1}(K)^{d\times d},\text{ for all }K\in \mathcal{T}_n\}.
\end{equation}
The space of discrete temperatures $U^n$ is not required to satisfy any inf-sup stability conditions, and so it suffices to choose any $H^1$-conforming space for which the expected order of accuracy is consistent with that of the other variables.

\subsection{Convective term}
A useful property in the analysis of systems describing incompressible fluids is that the convective term vanishes when testing with the divergence-free velocity itself. This is a consequence of the identity
\begin{equation}
	-\int_\Omega (\bv\otimes \bv)\twodot \BD(\bv) = 0
	\quad \text{for all }\bv\in \Cinfd \text{ with }\diver\bv =0.
\end{equation}
Such an identity will not be satisfied in general with only discretely divergence-free elements. In order to recover this cancellation property at the discrete level let us define a skew-symmetric form of the convective term as follows:
\begin{equation*}
\renewcommand{\arraystretch}{1.5}
\mathcal{B}(\bm{u},\bm{v},\bm{w}) := \left\{
\begin{array}{cc}
- \displaystyle\int_\Omega\bm{u}\otimes\bm{v}\twodot\nabla \bm{w}, & \textrm{ if } \Vndiv \subset W^{1,1}_{0,\textrm{div}}(\Omega)^d,\\
  \displaystyle\frac{1}{2}\int_\Omega  \bm{u}\otimes\bm{w}\twodot\nabla\bm{v}-\bm{u}\otimes\bm{v}\twodot\nabla \bm{w}, & \textrm{ otherwise}.\\
\end{array}
\right.
\end{equation*}
This new trilinear form now satisfies $\mathcal{B}(\bv,\bv,\bv)=0$ for any $\bv\in W^{1,\infty}_0(\Omega)^d$, regardless of whether $\bv$ is divergence-free or not, and it reduces to the original trilinear form $-\int_\Omega (\bu\otimes \bw)\twodot \nabla \bw$ if $\diver \bv=0$.

Let us now define
\begin{equation*}
	\tilde{r}:= \min \{r', r^*/2\}, \text{ where }
	r^* := \left\{\begin{array}{cc}\frac{dr}{d-r} & \text{if }r<d,\\
	 \infty, & \text{otherwise}.\end{array} \right.
\end{equation*}
Observe that the condition $\tilde{r}>1$ is equivalent to $r> \frac{2d}{d+2}$, which is the natural condition required to have a well-defined weak form of the convective term, because it ensures that $\Wonep{r}  \hookrightarrow L^2(\Omega)^d$. In this case, for exactly divergence-free functions $\bu,\bv,\bw\in \Vndiv$ one has that
\begin{equation}
	|\mathcal{B}(\bu, \bv, \bw)| \leq \int_\Omega |\bu\otimes\bv\twodot \nabla \bw|
	\leq c \|\bu\|_{\Wonep[]{r}}\|\bv\|_{\Wonep[]{r}}\|\bw\|_{\Wonep[]{\tilde{r}'}}.
\end{equation}
Otherwise one needs the stronger assumption $r>\frac{2d}{d+1}$; this ensures that there is an $s\in (1,\infty)$ such that $\frac{1}{r} + \frac{1}{2\tilde{r}} + \frac{1}{s} = 1$ and so (c.f.\ \cite{Diening:2013})
\begin{equation}\label{eq:bound_convective_skew}
	\begin{split}
		\int_\Omega |\bu \otimes \bw \twodot\nabla \bv|& \leq \|\bu\|_{\Lp{2\tilde{r}}} \|\bv\|_{\Wonep[]{r}} \|\bw\|_{\Lp{s}}\\
							       &	\leq c \|\bu\|_{\Wonep[]{r}}\|\bv\|_{\Wonep[]{r}}\|\bw\|_{\Wonep[]{\tilde{r}'}},
\end{split}
\end{equation}
for any $\bu,\bv\in \Sobs{1}{r}{}{d},\bw\in \Sobs{1}{\tilde{r}'}{}{d}$. Thus we deduce that the trilinear form $\mathcal{B}(\cdot,\cdot,\cdot)$ is bounded on $\Wonep{r}\times \Wonep{r}\times \Wonep{\tilde{r}'}$ if $r>\frac{2d}{d+2}$ when using exactly divergence-free elements and if $r>\frac{2d}{d+1}$ otherwise. This does not pose a problem when working with the constitutive relation \eqref{eq:implicit_CR} (for which $r=2$), but for relations with more general $r$-growth the more demanding requirement that $r>\frac{2d}{d+1}$ would impose a restriction on the convergence result that can be obtained (see \cite[Thm.\ 18]{Diening:2013}). In order to circumvent this issue we shall make use of a reconstruction operator.

\begin{assumption}[Reconstruction operator $\pi^n$]\label{as:reconstruction_operator}
	Let $X^n$ be an auxiliary $H(\diver;\Omega)$-conforming finite element space. There exists a map $\pi^n\colon \Wonep{1}\to V^n + X^n$ (usually called a reconstruction operator) that satisfies:
\begin{itemize}[leftmargin = 0.8cm]
	\item (Preservation of Divergence). If $\bv \in \Vndiv$ then $\diver (\pi^n \bv) = 0$ pointwise.
	\item (Consistency). For every $\bv\in V^n$ and $K\in \mathcal{T}_n$ it holds that
		\begin{equation*}
			\|\bv - \pi^n \bv\|_{L^s(K)}\leq c h_K^m |\bv|_{W^{m,s}(K)}, \quad \text{ for }s\in[1,\infty),\, m\in \{0,1,2\}.
		\end{equation*}
\end{itemize}
\end{assumption}

Operators with the properties described above have been constructed in \cite{Linke2014,Linke2016a,Linke2016,Linke2017,John2017} for elements with discontinuous pressures; the construction is based on the interpolation operators associated with the Raviart--Thomas and Brezzi--Douglas--Marini elements. A slightly more complicated construction for elements with continuous pressures was introduced in \cite{Lederer2017}; however, this construction is computationally expensive and so might not be advantageous in practice. These reconstruction operators have been employed to obtain pressure-robust discretisations by ``repairing'' the $L^2$-orthogonality between discretely divergence-free functions and gradient fields; see \cite{John2017} for more details. In order to exploit the advantages of this framework one has to replace the $L^2$ inner products in the discrete formulation in the following way:
\begin{equation}
\int_\Omega \bw\cdot\bv  \mapsto \int_\Omega \bw \cdot \pi^n\bv,
\end{equation}
where $\bv\in V^n$ is a test function. As for the convective term, let us define
\begin{equation}
\renewcommand{\arraystretch}{1.5}
\tilde{\mathcal{B}}_n(\bu,\bv,\bw) := \left\{
\begin{array}{cc}
- \displaystyle\int_\Omega\bm{u}\otimes\bm{v}\twodot\nabla \bm{w}, & \textrm{ if } \Vndiv \subset W^{1,1}_{0,\textrm{div}}(\Omega)^d,\\
 - \int_\Omega \bu\otimes \pi^n \bv\twodot \nabla \bw, & \textrm{ otherwise}.\\
\end{array}
\right.
\end{equation}
From the properties of $\pi^n$ stated in Assumption \ref{as:reconstruction_operator} one readily sees that the trilinear form $\tilde{\mathcal{B}}_n$ is bounded on $\Wonep{r}\times \Wonep{r}\times \Wonep{\tilde{r}'}$, and that $\tilde{\mathcal{B}}_n(\bv,\bv,\bv)=0$ for any $\bv\in \Vndiv$.

For the advective term for the temperature one can analogously define the trilinear form
\begin{equation*}
\renewcommand{\arraystretch}{1.5}
\mathcal{C}(\bm{u},\theta,\eta) := \left\{
\begin{array}{cc}
- \displaystyle\int_\Omega\bm{u}\theta \cdot\nabla \eta, & \textrm{ if } \Vndiv \subset W^{1,1}_{0,\textrm{div}}(\Omega)^d,\\
  \displaystyle\frac{1}{2}\int_\Omega  \bm{u}\eta\cdot\nabla\theta - \bm{u}\theta \cdot \nabla \eta, & \textrm{ otherwise},\\
\end{array}
\right.
\end{equation*}
which is well defined and bounded on $\Wonep{r}\times \Hone[] \times \Wonep[]{\infty}$ assuming that $r>\frac{2d}{d+2}$. In addition, this form satisfies $\mathcal{C}(\bu,\eta,\eta) = 0$ for any $\eta\in \Wonep[]{\infty}$, regardless of whether $\bu$ is divergence-free or not. The form $\mathcal{C}$ does not impose additional restrictions like $\mathcal{B}$ does for small $r$, but a trilinear form using a reconstruction operator $\tilde{\mathcal{C}}_n$  could be used instead (and defined analogously).

\section{Finite Element Approximation} \label{sec:discretisation}
Let us now set the physical constants to unity for ease of readability (appropriate non-dimensional forms of the system will be employed in Section \ref{sec:experiments}). \txtb{For simplicity the body force $\bm{f}$ is also set to zero; the result however remains valid for any $\bm{f}\in H^{-1}(\Omega)^d$.} Suppose that $\theta_b \in \Hhalfzero[\Gamma_D] := \Sobs[\Gamma_D]{1/2}{2}{00}{} $, and let $\hat{\theta}_b\in \Hone[]$ be such that $\hat{\theta}_b|_{\Gamma_D} = \theta_b$. We can now define the weak formulation of the system (without viscous heating).

\noindent
\textbf{Formulation A$_{\mathbf{0}}$.} Find $(\BS,\theta,\bu,p)\in \Lsymtr{2}\times (\hat{\theta}_b + \SobsH{1}{\Gamma_D}{}) \times \SobsH{1}{0}{d} \times \Lmean{2}$ such that:
\begin{subequations}\label{eq:WeakFormulation}
\begin{alignat}{2}
\int_\Omega \BS:\BD(\bv) -\int_\Omega \bu\otimes\bu&:\BD(\bv) -\int_\Omega p  \diver\bv = \int_\Omega \theta \bv\cdot\be_d \quad & \forall\,\bv\in C^\infty_0(\Omega)^d,\\
	-&\int_\Omega q  \diver\bu = 0 & \forall\, q\in C^\infty_0(\Omega),\\
	\int_\Omega \hat{\kappa}(\theta)\nabla&\theta\cdot \nabla\eta - \bu\theta\cdot\nabla\eta = 0 & \forall\, \eta \in C^\infty_{\Gamma_D}(\Omega),\\
	\BG(&\BS,\BD(\bu),\theta) = \bm{0} & \textrm{a.e. in }\Omega.\label{eq:weak_form_constrel}
\end{alignat}
\end{subequations}

Let $\hat{\theta}_b^n$ be the standard Scott--Zhang interpolant of $\hat{\theta}_b$ into $\hat{U}^n$, where $\hat{U}^n$ is the same finite element space as $U^n$, but without strongly imposed boundary conditions. We have everything in place to state the finite element approximation of the problem.

\noindent
\textbf{Formulation A$^\mathbf{n}_{\mathbf{0}}$.} Find $(\theta^n,\bu^n,p^n) \in (\hat{\theta}^n_b + U^n)\times V^n \times M^n_0$ such that:
\begin{subequations}\label{eq:FEFormulation}
\begin{alignat}{2}
	\int_\Omega \Scr^n(\BD(\bu^n),\theta^n)\twodot\BD(\bv) +\mathcal{B}(&\bu^n,\bu^n,\bv) -\int_\Omega p^n  \diver\bv = \int_\Omega \theta^n  \bv\cdot\be_d \quad & \forall\,\bv\in V^n,\\
	-&\int_\Omega q  \diver\bu^n = 0 & \forall\, q\in M^n,\\
	\int_\Omega \hat{\kappa}(\theta^n )\nabla(\theta^n&)\cdot \nabla\eta
	+  \mathcal{C}(\bu^n,\theta^n,\eta)  = 0 & \forall\, \eta \in U^n.
\end{alignat}
\end{subequations}
In case one wishes to compute the \txtb{deviatoric} stress directly, a 4-field formulation may be employed instead. We refer to this formulation as Formulation B$^n_0$. We will prove that the solutions to the discrete formulations A$^n_0$ and B$^n_0$ converge to a weak solution of Formulation A$_0$.

\noindent
\textbf{Formulation B$^\mathbf{n}_{\mathbf{0}}$.} Find $(\BS^n,\theta^n,\bu^n,p^n) \in \Sigma^n\times (\hat{\theta}^n_b + U^n)\times V^n \times M^n_0$ such that:
\begin{subequations}\label{eq:FEFormulation2}
\begin{alignat}{2}
	\int_\Omega (&\Dcr^n(\BS^n,\theta^n ) - \BD(\bu^n))\twodot  \btau =0  &\forall\,\btau \in \Sigma^n,\label{eq:discrete_CR_B0}\\
	\int_\Omega \BS^n\twodot\BD(\bv) +\mathcal{B}(&\bu^n,\bu^n,\bv) -\int_\Omega p^n  \diver\bv = \int_\Omega \theta^n \bv\cdot\be_d \quad & \forall\,\bv\in V^n, \label{eq:discrete_momentum_B0}\\
	&-\int_\Omega q  \diver\bu^n = 0 & \forall\, q\in M^n,\label{eq:discrete_mass_B0}\\
	\int_\Omega \hat{\kappa}(\theta^n )&\nabla\theta^n\cdot \nabla\eta
	+  \mathcal{C}(\bu^n,\theta^n)  = 0 & \forall\, \eta \in U^n.\label{eq:discrete_temp_B0}
\end{alignat}
\end{subequations}
We define Formulations $\tilde{\textrm{A}}^n_0$ and $\tilde{\textrm{B}}^n_0$ as the analogues of the formulations $\textrm{A}_0^n$ and $\textrm{B}_0^n$, respectively, in which we replace $\mathcal{B}$ and  $\mathcal{C}$ by $\tilde{\mathcal{B}}_n$ and $\tilde{\mathcal{C}}_n$. The following lemma asserts that all of these formulations have a solution.

\begin{lemma}\label{lm:existence_discrete}
	Suppose the material parameters satisfy condition \eqref{eq:assumptions_materialparameters} and suppose that $\{U^n,V^n,M^n\}_{n\in\mathbb{N}}$ (respectively $\{\Sigma^n,U^n,V^n,M^n\}_{n\in\mathbb{N}}$) is a family of finite element spaces satisfying Assumptions \ref{as:approximability} and \ref{as:Projector_Velocity}--\ref{as:Projectors_PressureTemp} (resp. \ref{as:approximability}--\ref{as:Projectors_PressureTemp}). In the case of formulations $\tilde{\mathrm{A}}^n_0$ and $\tilde{\mathrm{B}}^n_0$ suppose further that Assumption \ref{as:reconstruction_operator} holds. Then, for every $n\in \mathbb{N}$, Formulations $\mathrm{A}^n_0$ and $\tilde{\mathrm{A}}_0^n$ (resp. $\mathrm{B}^n_0$ and $\tilde{\mathrm{B}}^n_0$) admit a solution $(\theta^n,\bu^n,p^n)\in (\hat{\theta}^n_b + U^n)\times V^n\times M^n_0$ (resp.\  $(\BS^n,\theta^n,\bu^n,p^n) \in \Sigma^n \times (\hat{\theta}^n_b + U^n) \times V^n \times M^n_0$). Moreover, the following \textit{a priori} estimate holds:
\begin{subequations}\label{eq:apriori}
	\begin{equation}
		\|\bu^n\|_{\Hone[]} + \|\theta^n\|_{\Hone[]}
		+ \|p^n\|_{\Lp{2}}  + \|\BS^n\|_{\Lp{2}} \leq c,
	\end{equation}
where the constant $c$ is independent of $n$; we denote $\BS^n := \Scr^n(\BD(\bu^n),\theta^n)$ in the case of Formulations $\mathrm{A}_0^n$ and $\tilde{\mathrm{A}}^n_0$. In addition, for Formulations $\mathrm{B}^n_0$ and $\tilde{\mathrm{B}}^n_0$ we have
\begin{equation}
	\|\Dcr^n(\BS^n,\theta^n)\|_{\Lp{2}} \leq c.
\end{equation}
\end{subequations}
\end{lemma}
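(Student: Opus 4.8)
The plan is to treat each fixed $n$ as a finite-dimensional nonlinear system and invoke the standard consequence of Brouwer's theorem: a continuous map $P\colon\mathbb{R}^N\to\mathbb{R}^N$ satisfying $(P(\xi),\xi)>0$ on a sphere $|\xi|=R$ must vanish somewhere inside the ball. First I would eliminate the pressure by restricting the velocity trial and test spaces to the discretely divergence-free subspace $\Vndiv$, on which the pressure term disappears and the mass equation is automatically satisfied; this reduces the unknown to $(\bu^n,\phi^n)$ with $\phi^n:=\theta^n-\hat{\theta}^n_b\in U^n$. Having solved on $\Vndiv$, the pressure $p^n\in M^n_0$ is then recovered from the momentum residual via the discrete inf-sup condition \eqref{eq:infsupvel}. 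Continuity of $P$ is immediate, since $\Scr^n$ and $\Dcr^n$ are continuous (minima of continuous functions built from the continuous material parameters), $\hat{\kappa}$ is continuous, and the (possibly reconstructed) convective forms are multilinear and hence continuous on the finite-dimensional spaces.

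The positivity of $(P(\xi),\xi)$ is exactly the \textit{a priori} estimate, which I obtain by testing with the discrete solution. Testing the momentum equation with $\bv=\bu^n$ annihilates the convective contribution ($\mathcal{B}(\bu^n,\bu^n,\bu^n)=0$, respectively $\tilde{\mathcal{B}}_n(\bu^n,\bu^n,\bu^n)=0$) and the pressure term, so the coercivity of $\Scr^n$ (Lemma \ref{lm:monotonicity_coercivity_temp}, valid uniformly in $n$) combined with Korn's inequality on $H^1_0(\Omega)^d$ yields $\alpha(\|\bu^n\|_{H^1(\Omega)}^2+\|\BS^n\|_{L^2(\Omega)}^2)\leq c(1+\|\theta^n\|_{L^2(\Omega)}\|\bu^n\|_{L^2(\Omega)})$, the right-hand side originating from the buoyancy term $\int_\Omega\theta^n\bu^n\cdot\be_d$. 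Testing the energy equation with $\eta=\phi^n$ and exploiting the cancellation $\mathcal{C}(\bu^n,\theta^n,\theta^n)=0$, together with $c_1\le\hat{\kappa}\le c_2$ and Poincaré's inequality (available because $|\Gamma_D|\neq0$), bounds $\|\nabla\theta^n\|_{L^2(\Omega)}$ in terms of the lifting data $\|\hat{\theta}^n_b\|_{H^1(\Omega)}$ (controlled $n$-uniformly by stability of the Scott--Zhang interpolant) and the residual advection term $\mathcal{C}(\bu^n,\theta^n,\hat{\theta}^n_b)$.

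The crux is the two-way coupling between these bounds: the velocity estimate feeds on $\|\theta^n\|_{L^2(\Omega)}$ through buoyancy, while the temperature estimate feeds on $\|\bu^n\|_{H^1(\Omega)}$ through advection. Unlike the continuous problem, where a maximum principle bounds the temperature by the boundary data independently of the velocity, no such device is available for $H^1$-conforming elements, so the two estimates must be closed simultaneously. I would add them and absorb the cross terms $\int_\Omega\phi^n\bu^n\cdot\be_d$ and $\mathcal{C}(\bu^n,\theta^n,\hat{\theta}^n_b)$ by Young's inequality into the coercive quantities $\alpha\|\bu^n\|_{H^1(\Omega)}^2$ and $c_1\|\nabla\theta^n\|_{L^2(\Omega)}^2$. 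This is the delicate step, because the coupling is genuinely quadratic and balanced; closure therefore rests on the lower bounds $\alpha,c_1$ dominating the relevant Korn, Poincaré and Sobolev constants, which I would secure by choosing the lifting $\hat{\theta}_b$ through a Hopf-type construction so that its advective contribution carries an arbitrarily small coefficient (and, if needed, under a corresponding compatibility on the buoyancy strength). Since every constant involved — $\alpha,\beta$, $c_0,c_1,c_2$, the Korn and Poincaré constants, the stability constants of the projectors and of the reconstruction operator, the inf-sup constants $\beta_s,\gamma_s$, and the lifting norm — is independent of $n$, the resulting bound $\|\bu^n\|_{H^1(\Omega)}+\|\theta^n\|_{H^1(\Omega)}+\|p^n\|_{L^2(\Omega)}+\|\BS^n\|_{L^2(\Omega)}\le c$ is uniform in $n$, and Brouwer then delivers a solution.

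For the four-field Formulations $\mathrm{B}^n_0$ and $\tilde{\mathrm{B}}^n_0$ I would carry the stress $\BS^n\in\Sigma^n$ as an independent unknown; the same testing procedure applies, with \eqref{eq:discrete_CR_B0} identifying $\BD(\bu^n)$ with the $\Sigma^n$-projection of $\Dcr^n(\BS^n,\theta^n)$, and the stress inf-sup condition \eqref{eq:infsupstress} used both to control $\|\BS^n\|_{L^2(\Omega)}$ and to recover $\BS^n$ within the Brouwer argument. The remaining bound $\|\Dcr^n(\BS^n,\theta^n)\|_{L^2(\Omega)}\le c$ then follows from the coercivity of $\Dcr^n$ after testing \eqref{eq:discrete_CR_B0} with $\btau=\BS^n$ and invoking the bounds already established. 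Finally, the reconstructed formulations require no modification: Assumption \ref{as:reconstruction_operator} ensures that $\tilde{\mathcal{B}}_n$ and $\tilde{\mathcal{C}}_n$ remain bounded on the relevant product spaces while still satisfying the energy-conservation identities, so every step above carries over unchanged.
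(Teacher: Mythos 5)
Your proposal reaches the same destination with the same basic tools (Brouwer's corollary on a sphere, testing with the discrete solution, inf-sup recovery of the pressure), but its architecture differs from the paper's in one essential respect: you apply Brouwer directly to the fully coupled velocity--temperature (or stress--velocity--temperature) system, whereas the paper first decouples it. The paper constructs a Picard-type sequence $\theta^n_j$: given $\theta^n_{j-1}$ it solves the flow subproblem \eqref{eq:discrete_fixedp1} (augmented by an elliptic regularisation $\tfrac{1}{j}\int_\Omega\BD(\bu^n_j)\twodot\BD(\bv)$, which supplies the coercivity in the velocity component that the 4-field map otherwise lacks), then solves the temperature subproblem \eqref{eq:discrete_fixedp2} with the new velocity; each subproblem is handled by Brouwer's corollary, and a solution of the coupled discrete system is obtained by letting $j\to\infty$ and using the equivalence of weak and strong convergence in finite dimensions. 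The payoff of the decoupling is that in each subproblem the troublesome bilinear couplings (buoyancy in the momentum equation, advection in the energy equation) involve a \emph{frozen} factor, so the sign condition on the sphere is immediate without any balancing of constants. Your direct route is legitimate and arguably cleaner, but it forces you to confront the quadratic coupling already at the level of the sphere condition, which is exactly where your argument becomes conditional.

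That conditionality is the one substantive divergence. You close the coupled estimate by Young's inequality and concede that this rests on the coercivity constants dominating the relevant Korn, Poincar\'e and Sobolev constants, to be secured by a Hopf-type lifting ``and, if needed, under a corresponding compatibility on the buoyancy strength''. The lemma as stated carries no smallness or compatibility hypothesis, so a proof that needs one proves a weaker statement. (To be fair, you have put your finger on a real delicacy: the paper's own chain of estimates --- $\|\bu^n_j\|_{\Hone[]}\lesssim\|\theta^n_{j-1}\|_{\Lp{2}}$ from the flow subproblem, $\|\theta^n_j\|_{\Hone[]}\lesssim\|\bu^n_j\|_{\Hone[]}$ from the temperature subproblem, and the final test of the coupled system with $(\BS^n,\theta^n-\hat{\theta}^n_b,\bu^n,p^n)$ --- has the same circular structure and is not explicitly closed there either; the decoupling yields existence of each iterate but not, by itself, $j$-uniform bounds.) If you follow the paper, state the iteration, justify each subproblem by Brouwer, and derive \eqref{eq:apriori} by testing the coupled limit system; if you keep the direct route, you must either prove that the absorption works for the given data or add the compatibility condition to the statement. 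Two smaller inaccuracies: the stress inf-sup condition \eqref{eq:infsupstress} is used to bound $\|\bu^n\|_{\Hone[]}$ by $\|\Dcr^n(\BS^n,\theta^n)\|_{\Lp{2}}$ through the discrete constitutive block, not to ``recover'' $\BS^n$ (the stress is a genuine unknown of the Brouwer map, and $\|\BS^n\|_{\Lp{2}}$ is controlled by the coercivity of $\Dcr^n$); and in the 4-field case some regularisation of the momentum block, or an equivalent device, is needed to make the Brouwer map coercive in the velocity direction.
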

\begin{proof}
	We will carry out the proof for Formulation $\mathrm{B}^n_0$; the proof for the other formulations is analogous with some simplifications. The existence proof will make use of a fixed point argument. Let $\theta^n_0$ be an arbitrary nonzero element of $\hat{\theta}^n_b+U^n$ and define, for $j\in\mathbb{N}$, the function $\theta^n_j\in \hat{\theta}^n_b+U^n$ as follows: given $\theta^n_{j-1}$ we first find $(\BS^n_j,\bu^n_j,p^n_j)\in \Sigma^n\times V^n\times M_0^n$ by solving
\begin{subequations}\label{eq:discrete_fixedp1}
\begin{alignat}{2}
	\int_\Omega (&\Dcr^n(\BS^n_j,\theta^n_{j-1}) - \BD(\bu^n_j))\twodot  \btau =0  &\forall\,\btau \in \Sigma^n,\\
	\int_\Omega \left( \frac{1}{j}\BD(\bu^n_j) + \BS^n_j\right)\twodot\BD(\bv) +\mathcal{B}(&\bu^n_j,\bu^n_j,\bv) -\int_\Omega p^n_j  \diver\bv = \int_\Omega \theta^n_{j-1} \bv\cdot\be_d \: & \forall\,\bv\in V^n,\\
	&-\int_\Omega q  \diver\bu^n_j = 0 & \forall\, q\in M^n,
\end{alignat}
\end{subequations}
and then $\theta^n_j$ is defined as $\hat{\theta}^n_b + \tilde{\theta}^n_j$, where $\tilde{\theta}^n_j\in U^n$ is the solution of the nonlinear problem
\begin{equation}\label{eq:discrete_fixedp2}
	\int_\Omega \hat{\kappa}(\tilde{\theta}^n_j + \hat{\theta}^n_b)\nabla(\tilde{\theta}^n_j + \hat{\theta}^n_b)\cdot \nabla\eta
	+  \mathcal{C}(\bu^n_j,\tilde{\theta}^n_j + \hat{\theta}^n_b,\eta)
 = 0 \qquad \forall\, \eta \in U^n.
\end{equation}
In order to show that the problem \eqref{eq:discrete_fixedp1} is well-posed, let us define a mapping $F^n_j\colon \Sigma^n\times \Vndiv \to (\Sigma^n \times \Vndiv)^*$ by
\begin{align*}
	\langle F^n_j(\bsigma,\bv); (\btau,\bw)\rangle :=
	&\int_\Omega  ( \Dcr^n(\bsigma,\theta^n_{j-1})\twodot \btau
	- \BD(\bv)\twodot\btau
	+ \frac{1}{j}\BD(\bv)\twodot\BD(\bw) \\
						       &	+ \bsigma\twodot \BD(\bw)
+						       \mathcal{B}(\bv,\bv,\bw)
- \theta^n_j \bv\cdot \be_d).
\end{align*}
By using the coercivity of $\Dcr^n$ and the fact that $\mathcal{B}(\bv,\bv,\bv)=0$, one obtains using the inequalities of Young, Korn and Poincar\'{e} that there exists a $\delta(j)>0$ such that
\begin{equation*}
	\langle F^n_j(\bsigma,\bv),(\bsigma,\bv) \rangle >0 \quad \text{ if } \quad
	\|(\bsigma,\bv)\| = \delta(j).
\end{equation*}
A corollary of Brouwer's fixed point theorem \cite[Ch.\ 4, Cor.\ 1.1]{Girault1986} guarantees the existence of functions $(\BS^n_j,\bu^n_j)\in \Sigma^n\times \Vndiv$ satisfying $F^n_j(\BS^n_j,\bu^n_j)=0$ (which is equivalent to \eqref{eq:discrete_fixedp1} with divergence-free test functions) and such that $\|(\BS^n_j,\bu^n_j)\|\leq \delta(j)$. The existence of  $p^n_j\in M^n_0$ then follows from the inf-sup condition \eqref{eq:infsupvel}. A similar argument can be used to prove the well-posedness of the problem \eqref{eq:discrete_fixedp2}.

Now, the inf-sup condition \eqref{eq:infsupstress} and the discrete form of the constitutive relation \eqref{eq:discrete_CR_B0} allow us to control, uniformly in $j$ and $n$, the norm of the velocity in terms of the stress:
\begin{equation}
	\gamma_2 \|\bu^n_j\|_{\Hone[]} \leq \|\BS^n_j\|_{\Lp{2}}.
\end{equation}
Therefore, testing \eqref{eq:discrete_fixedp1} with $(\BS^n_j,\bu^n_k,p^n_j)$ yields the estimate
\begin{equation}\label{eq:apriori_fixedp}
	\|\Dcr^n(\BS^n_j,\theta^n_{j-1})\|^2_{\Lp{2}}
	+ \|\BS^n_j\|^2_{\Lp{2}}
	+ \|\bu^n_j\|^2_{\Hone[]}
	\leq  c \|\theta^n_{j-1}\|^2_{\Lp{2}},
\end{equation}
where $c>0$ is independent of $j$ and $n$. The inf-sup condition \eqref{eq:infsupvel} and the discrete momentum equation in turn imply an estimate for the pressure:
\begin{equation}
	\|p^n_j\|^2_{\Lp{2}} \leq c\|\theta^n_{j-1}\|^2_{\Lp{2}}.
\end{equation}
Furthermore, testing \eqref{eq:discrete_fixedp2} with $\theta^n_j-\hat{\theta}^n_b$ results in
\begin{equation}
	\|\theta^n_j\|^2_{\Hone[]} \leq c \|\bu^n_j\|^2_{\Hone[]}.
\end{equation}
Hence, up to a subsequence, we have as $j\to\infty$ that
\begin{align}
	\Dcr^n(\BS^n_j,\theta^n_{j-1}) &\rightharpoonup \overline{\BD}^n && \text{weakly in }\Lsym{2}, \notag \\
	\BS^n_j &\to \BS^n && \text{strongly in }\Lsym{2}, \notag \\
	\bu^n_j &\to \bu^n && \text{strongly in }\Hone, \\
	p^n_j &\to p^n && \text{strongly in }\Lp{2}, \notag \\
	\theta^n_j &\to \theta^n && \text{strongly in }\Hone[], \notag
\end{align}
where we used the fact that weak and strong convergence are equivalent in finite-dimensional spaces. Since $\Dcr^n$ is continuous and the convergences are strong, one can straightforwardly identify $\overline{\BD}^n = \Dcr^n(\BS^n,\theta^n)$ and pass to the limit to show that $(\BS^n,\theta^n,\bu^n,p^n)$ solve Formulation $\mathrm{B}^n_0$. Now, testing Formulation $\mathrm{B}^n_0$ with $(\BS^n,\theta^n-\hat{\theta}^n_b,\bu^n,p^n)$ allows one to obtain the estimate \eqref{eq:apriori}. Note that the inf-sup conditions were essential to obtain estimates that are uniform in $n$.
\end{proof}

Having shown that the discrete problems admit solutions, we now consider the question of convergence.

\begin{theorem}\label{thm:convergence_temp_dependent}
	Suppose the same assumptions as in  Lemma \ref{lm:existence_discrete} hold and suppose that $\{(\theta^n,\bu^n,p^n)\}_{n\in\mathbb{N}}$ (respectively $(\{\BS^n,\theta^n,\bu^n,p^n\}_{\mathbb{N}})$) is a sequence of solutions of Formulation $\mathrm{A}^n_0$ or $\tilde{\mathrm{A}}^n_0$ (resp.\ Formulation $\mathrm{B}^n_0$ or $\tilde{\mathrm{B}}^n_0$). Then there exists a solution $(\BS,\theta,\bu,p)\in \Lsymtr{2}\times (\hat{\theta}_b+ H^1_{\Gamma_D}(\Omega)) \times \SobsH{1}{0}{d} \times \Lmean{2}$ of Formulation $\mathrm{A}_0$ such that, up to a subsequence, as $n\to\infty$:
	\begin{equation}\label{eq:convergences}
	\begin{split}
	\BS^n &\rightharpoonup \BS \qquad \text{weakly in }\Lsym{2}, \\
	\bu^n &\rightharpoonup \bu \qquad  \text{weakly in }\Hone,\\
	p^n &\rightharpoonup p \qquad \text{weakly in }\Lp{2},  \\
	\theta^n &\rightharpoonup \theta \qquad \text{weakly in }\Hone[],
\end{split}
\end{equation}
where in the case of Formulations $\mathrm{A}^n_0$ and $\tilde{\mathrm{A}}^n_0$ we denote $\BS^n:= \Scr^n(\BD(\bu^n),\theta^n)$.
\end{theorem}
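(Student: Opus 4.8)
The plan is to extract limits from the uniform \textit{a priori} bounds of Lemma \ref{lm:existence_discrete}, pass to the limit in the linear and transport terms using compactness, and finally identify the nonlinear constitutive relation by a Minty-type monotonicity argument. I will carry out the details for Formulation $\mathrm{B}^n_0$; the argument for $\mathrm{A}^n_0$ is identical with $\BS^n$ replaced by $\Scr^n(\BD(\bu^n),\theta^n)$, and the reconstructed variants $\tilde{\mathrm{A}}^n_0,\tilde{\mathrm{B}}^n_0$ differ only through the consistency of $\pi^n$.

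\emph{Compactness.} The estimate \eqref{eq:apriori} bounds $\bu^n,\theta^n$ in $\Hone[]$ and $\BS^n,p^n$ in $\Lp{2}$ uniformly in $n$. By reflexivity I extract a subsequence realising \eqref{eq:convergences}, together with $\BD(\bu^n)\rightharpoonup\BD(\bu)$ and $\Dcr^n(\BS^n,\theta^n)\rightharpoonup\overline{\BD}$ weakly in $\Lsym{2}$. Since $\Hone[]\hookrightarrow\hookrightarrow L^q(\Omega)$ for every $q$ below the Sobolev exponent $\tfrac{2d}{d-2}$ (any $q<\infty$ if $d=2$), Rellich--Kondrachov gives $\bu^n\to\bu$ and $\theta^n\to\theta$ strongly in $L^q(\Omega)$ and, along a further subsequence, a.e. in $\Omega$. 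The Dirichlet datum survives because the Scott--Zhang interpolant satisfies $\hat\theta^n_b\to\hat\theta_b$ in $\Hone[]$, placing $\theta-\hat\theta_b$ in $H^1_{\Gamma_D}(\Omega)$.

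\emph{Passage to the limit in all but the constitutive relation.} I fix smooth test functions and replace them by their Fortin projections $\Pi^n_V\bphi,\Pi^n_U\varphi,\Pi^n_M q,\Pi^n_\Sigma\btau$, which converge in the relevant norms by \eqref{eq:fem_projector_convergence}. Weak--strong pairing handles the stress and pressure terms, and the mass equation yields $\diver\bu=0$. For the buoyancy and transport terms the strong $L^q$ convergence of $\bu^n,\theta^n$ paired against the strongly convergent test functions gives $\mathcal{B}(\bu^n,\bu^n,\Pi^n_V\bphi)\to\mathcal{B}(\bu,\bu,\bphi)$ and $\mathcal{C}(\bu^n,\theta^n,\Pi^n_U\varphi)\to\mathcal{C}(\bu,\theta,\varphi)$; in the reconstructed case one additionally uses $\pi^n\bphi\to\bphi$ and $\|\bu^n-\pi^n\bu^n\|_{L^q}\to 0$ from the consistency estimate. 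The temperature-dependent conductivity is treated by noting $\hat\kappa(\theta^n)\nabla(\Pi^n_U\varphi)\to\hat\kappa(\theta)\nabla\varphi$ strongly in $\Lp{2}$ (dominated convergence, using $c_1\le\hat\kappa\le c_2$ and $\theta^n\to\theta$ a.e.) paired against $\nabla\theta^n\rightharpoonup\nabla\theta$. Thus $(\BS,\theta,\bu,p)$ satisfies every equation of Formulation $\mathrm{A}_0$ save the constitutive relation \eqref{eq:weak_form_constrel}; testing \eqref{eq:discrete_CR_B0} against $\Pi^n_\Sigma\btau$ also identifies $\overline{\BD}=\BD(\bu)$.

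\emph{Energy identity and Minty.} This is the crux. Testing the discrete momentum equation \eqref{eq:discrete_momentum_B0} with $\bu^n\in\Vndiv$ and using $\mathcal{B}(\bu^n,\bu^n,\bu^n)=0$ together with $\int_\Omega p^n\diver\bu^n=0$ (valid since $\bu^n\in\Vndiv$, $p^n\in M^n$, and exact for Scott--Vogelius) gives
\[
\int_\Omega\BS^n\twodot\BD(\bu^n)=\int_\Omega\theta^n\bu^n\cdot\be_d .
\]
The right-hand side tends to $\int_\Omega\theta\bu\cdot\be_d$, since $\theta^n\bu^n\to\theta\bu$ in $L^1(\Omega)$. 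On the other hand, the limiting momentum equation, extended by density to the admissible direction $\bu\in H^1_0(\Omega)^d$ and using $\diver\bu=0$ and $\mathcal{B}(\bu,\bu,\bu)=0$, yields $\int_\Omega\BS\twodot\BD(\bu)=\int_\Omega\theta\bu\cdot\be_d$. Hence $\lim_n\int_\Omega\BS^n\twodot\BD(\bu^n)=\int_\Omega\BS\twodot\BD(\bu)$, verifying the $\limsup$ hypothesis of Lemma \ref{lm:minty}. In the four-field case I first test \eqref{eq:discrete_CR_B0} with $\btau=\BS^n$ to obtain $\int_\Omega\BS^n\twodot\Dcr^n(\BS^n,\theta^n)=\int_\Omega\BS^n\twodot\BD(\bu^n)$, so the same identity holds for the pair $(\BS^n,\Dcr^n(\BS^n,\theta^n))$. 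With the weak limits of $\BS^n$ and $\BD(\bu^n)$ (resp.\ $\Dcr^n(\BS^n,\theta^n)\rightharpoonup\BD(\bu)$), the a.e. convergence of $\theta^n$, and this bound, the localised Minty lemma on $M=\Omega$ gives $\BG(\BS,\BD(\bu),\theta)=\bm{0}$ a.e., completing the identification of a solution of Formulation $\mathrm{A}_0$.

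The main obstacle is the final two steps: because $\BG$ is nonlinear and only weak convergence of $\BS^n$ and $\BD(\bu^n)$ is available, the constitutive relation cannot be passed to the limit directly. Everything rests on the energy identity, whose validity in turn depends on the exact cancellation $\mathcal{B}(\bu^n,\bu^n,\bu^n)=0$ furnished by the skew-symmetric (or reconstructed) convective form and on the discretely divergence-free structure annihilating the pressure contribution—precisely the properties engineered in Section~2.
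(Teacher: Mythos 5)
Your proposal is correct and follows essentially the same route as the paper: weak limits from the uniform bounds of Lemma \ref{lm:existence_discrete}, identification of $\overline{\BD}=\BD(\bu)$ and $\diver\bu=0$ via the Fortin projectors, strong $L^q$ convergence for the convective and buoyancy terms, the energy identity obtained by testing the discrete momentum equation with $\bu^n$ and the limiting one with $\bu$, and finally the localised Minty lemma (Lemma \ref{lm:minty}). The only detail you omit is the closing observation that taking traces in $\BG(\BS,\BD(\bu),\theta)=\bm{0}$ forces $\tr\BS=0$, which is needed to place $\BS$ in $\Lsymtr{2}$ as the theorem asserts.
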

\begin{proof}
	We will once again focus on Formulation $\mathrm{B}^n_0$, since the other cases are completely analogous. From the \textit{a priori} estimate \eqref{eq:apriori} and the fact that $\hat{\theta}^n_b \to \hat{\theta}_b$ in $H^1(\Omega)$, we immediately obtain the convergences \eqref{eq:convergences} (for a not relabelled subsequence) for some $(\BS,\theta,\bu,p) \in \Lsym{2} \times (\hat{\theta}_b + H^1_{\Gamma_D}(\Omega)) \times \SobsH{1}{0}{d} \times \Lmean{2}$, and that
	\begin{equation}
		\Dcr^n(\BS^n,\theta^n) \rightharpoonup \overline{\BD}\quad \text{weakly in }\Lsym{2}.
	\end{equation}
All that is left to prove is that the limiting functions are a solution of Formulation $\mathrm{A}_0$.

Let $\btau\in \Lsym{2}$ be arbitrary. Then \eqref{eq:convergences} and \eqref{eq:fem_projector_convergence} result in
\begin{equation}
	0 = \int_\Omega (\Dcr^n(\BS^n,\theta^n)-\BD(\bu^n))\twodot \Pi^n_{\Sigma}\btau \xrightarrow[\: n\to\infty \:]{}
	\int_\Omega (\overline{\BD} - \BD(\bu))\twodot \btau ,
\end{equation}
and therefore $\overline{\BD}=\BD(\bu)$ almost everywhere. Similarly, for an arbitrary $q\in \Lmean{2}$ one obtains that
\begin{equation}
	0 = \int_\Omega \diver\bu^n \,\Pi^n_M q \xrightarrow[\: n\to \infty \:]{}
	\int_\Omega \diver \bu \, q,
\end{equation}
and so $\bu$ is pointwise divergence-free. One can pass to the limit in \eqref{eq:discrete_momentum_B0} and \eqref{eq:discrete_temp_B0}  in a similar manner, but perhaps the convective terms are worth looking at in more detail. To that end, first note that the Sobolev embedding theorem ensures that (up to a subsequence) we have, for any $p\in [1,2^*)$,
\begin{align}\label{eq:strong_convergence}
	\bu^n &\to \bu &&\text{strongly in }{\Lp{p}}^d, \notag \\
	\theta^n &\to \theta &&\text{strongly in }{\Lp{p}},\\
	\theta^n &\to \theta &&\text{a.e. in }\Omega. \notag
\end{align}
The strong convergence of $\bu^n$ suffices to prove that, for an arbitrary $\bv\in H_0^1(\Omega)^d$:
\begin{equation}
\mathcal{B}(\bu^n,\bu^n,\Pi^n_V\bv) \xrightarrow[\: n\to\infty \:]{}
\frac{1}{2}\int_\Omega \bu \otimes \bv\twodot \nabla \bu
- \bu\otimes \bu \twodot \nabla\bv
= -\int_\Omega \bu\otimes \bu \twodot \BD\bv,
\end{equation}
where the last equality is a consequence of the fact that $\diver\bu = 0$.
Now, from testing the discrete momentum equation with $\bu^n$ and taking \eqref{eq:strong_convergence} into account we observe that
\begin{equation}\label{eq:limsup_temp_dependent}
	\limsup_{n\to\infty} \int_\Omega \BS^n\twodot\BD(\bu^n)
	= \lim_{n\to \infty} \int_\Omega \theta^n \bu^n\cdot\be_d
= \int_\Omega \theta \bu\cdot \be_d
= \int_\Omega \BS\twodot \BD(\bu),
\end{equation}
and hence by Lemma \ref{lm:minty} we conclude that $\BG(\BS,\BD(\bu),\theta) = \bm{0}$. Finally, by taking traces on both sides of the constitutive relation we also obtain that $\tr\BS = 0$ and so $\BS\in \Lsymtr{2}$, which concludes the proof.
\end{proof}

In the proof of Theorem \ref{thm:convergence_temp_dependent} it becomes clear that the only bottleneck that prevents one from considering constitutive laws with more general $r$-coercivity (e.g.\ a power-law with temperature dependent consistency), is the fact that Lemma \ref{lm:minty} is tied to the particular function $\BG$ defined in \eqref{eq:implicit_CR}. Using Minty's trick it is possible to show that if an explicit constitutive relation is available, an analogous convergence result will hold.

\begin{assumption}\label{as:explicit_S}
	Let $\Scr:\Omega\times \Rds \times \R \to \Rds$ be a continuous function satisfying for some $r>\frac{2d}{d+2}$:
\begin{itemize}[leftmargin = 0.8cm]
	\item (Monotonicity). For every $\btau_1,\btau_2 \in \Rds$:
		\begin{equation*}
			(\Scr(\btau_1,s)-\Scr (\btau_2,s))\twodot (\btau_1 - \btau_2)\geq 0 \text{ for fixed }s\in\R;
		\end{equation*}
	\item (Coercivity). There is a non-negative function $m\in \Lp{1}$ and a constant $c>0$ such that
		\begin{equation*}
			\Scr(\btau,s)\twodot \btau \geq -m + c(|\Scr(\btau,s)|^{r'} + |\btau|^r)\quad \text{ for all }\btau\in \Rds,s\in \R;
		\end{equation*}
	\item (Growth). There is a function $n\in \Lp{r'}$ and a constant $c>0$ such that
		\begin{equation*}
			|\Scr(\btau,s)| \leq c(|\btau|^{r'-1} + n);
		\end{equation*}
	\item (Compatibility). For a fixed $s\in\R$ we have that $\tr(\Scr(\btau,s)) = 0$ if and only if $\tr(\btau) = 0$, for any $\btau\in \Rds$.
\end{itemize}
\end{assumption}

When $r < \frac{3d}{d+2}$ the velocity $\bu$ is not an admissible test function anymore and so obtaining an identity such as \eqref{eq:limsup_temp_dependent} is not straightforward. This difficulty can be overcome by testing instead with a discrete Lipschitz truncation of the error $e^n := \bu - \bu^n$. The discrete Lipschitz truncation was introduced in \cite{Diening:2013}, and the idea is that it turns $\be^n$ into a Lipschitz function belonging to $V^n$ in such a way that the size of the set where the truncation does not equal the original function can be controlled. We note that the construction of this discrete Lipschitz truncation requires a refined version of Assumption \ref{as:Projector_Velocity}.

\begin{assumption}[Fortin Projector $\Pi^n_V$]\label{as:Projector_Velocity_local}
For each $n\in\mathbb{N}$ there is a linear projector $\Pi^n_V :W^{1,1}_0(\Omega)^d\rightarrow V^n$ such that it preserves the divergence in the same sense as in Assumption \ref{as:Projector_Velocity}, but the stability condition is replaced by:
\begin{itemize}[leftmargin = 0.8cm]
\item (Local $W^{1,1}$-stability). For every $s\in (1,\infty)$ there is a constant $c>0$, independent of $n$, such that
\begin{equation*}
	\frac{1}{|K|}\int_K |\nabla \Pi^n_V\bv|
	\leq c \frac{1}{|\Omega_K^n|}\int_{\Omega_K^n} |\nabla\bv|
	\qquad \forall \,\bm{v}\in W^{1,s}_0(\Omega)^d, K\in \mathcal{T}_n,
\end{equation*}
where $\Omega_K^n$ denotes the patch of elements in $\mathcal{T}_n$ whose intersection with $K$ is nonempty.
\end{itemize}
\end{assumption}

It can be shown that the local $W^{1,1}$-stability from Assumption \ref{as:Projector_Velocity_local} implies the global $W^{1,s}$-stability of Assumption \ref{as:Projector_Velocity} \cite{Belenki:2012,Diening:2013}. Some examples of finite elements satisfying Assumption \ref{as:Projector_Velocity_local} include the conforming Crouzeix--Raviart element, the MINI element, the Bernardi--Raugel element, the $\mathbb{P}_2$--$\mathbb{P}_0$ and the Taylor--Hood pair $\mathbb{P}_k$--$\mathbb{P}_{k-1}$ for $k\geq d$ \cite{Belenki:2012}; the lowest order Taylor--Hood pair in 3D also satisfies the assumption if the mesh has a certain macroelement structure \cite{Girault2015}. As for exactly divergence-free elements, this assumption can also be verified for low order Guzm\'{a}n--Neilan elements and the Scott--Vogelius pair \cite{Diening:2013,Tscherpel2018}.

\begin{corollary}\label{cor:explicit_relation_convergence}
	Let $r>\frac{2d}{d+2}$ and let $\Scr:\Rds\times \R\to\Rds$ be a function satisfying Assumption \ref{as:explicit_S} and suppose that $\{U^n,V^n,M^n\}_{n\in\mathbb{N}}$ is a family of finite element subspaces satisfying Assumptions \ref{as:approximability}, \ref{as:Projectors_PressureTemp}, \ref{as:reconstruction_operator}, and \ref{as:Projector_Velocity_local}. Then, for any $n\in\mathbb{N}$, the finite element formulation obtained by replacing $\Scr^n$ by $\Scr$ in Formulation $\tilde{\mathrm{A}}^n_0$ admits a solution $(\theta^n,\bu^n,p^n)\in (\hat{\theta}^n_b + U^n)\times V^n\times M^n_0$ and we have, up to subsequences, that
	\begin{align*}
		\bu^n &\rightharpoonup \bu &\text{weakly in }\Wonep{r},\\
		p^n &\rightharpoonup p &\text{weakly in }\Lp{\tilde{r}},\\
		\theta^n &\rightharpoonup \theta &\text{weakly in }H^1(\Omega),\\
		\Scr(\BD(\bu^n),\theta^n) &\rightharpoonup \BS &\text{weakly in }\Lsym{r'},
	\end{align*}
	where $(\BS,\theta,\bu,p)\in \Lsymtr{r'}\times(\hat{\theta}_b +\SobsH{1}{\Gamma_D}{})\times W^{1,r}_0(\Omega)^d\times \Lmean{\tilde{r}}$ is a solution of Formulation $\mathrm{A}_0$.
\end{corollary}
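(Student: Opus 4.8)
The plan is to follow the same three-stage strategy as in the proof of Theorem~\ref{thm:convergence_temp_dependent}---existence of discrete solutions, extraction of weak limits from uniform \textit{a priori} bounds, and identification of the limiting constitutive relation---but to replace the use of Lemma~\ref{lm:minty} (which is tied to the specific $\BG$ of \eqref{eq:implicit_CR}) by a \emph{discrete Lipschitz-truncation} argument combined with Minty's trick. This is precisely the ingredient that accommodates general $r$-growth down to $r>\tfrac{2d}{d+2}$.

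First I would establish existence. Since $\Scr(\cdot,s)$ is continuous, monotone and coercive, the fixed-point scheme of Lemma~\ref{lm:existence_discrete} carries over verbatim (indeed with a simplification, as no $\tfrac1j$-regularisation is needed): for a frozen temperature the velocity-pressure problem is solved by Brouwer's theorem, using coercivity together with $\tilde{\mathcal{B}}_n(\bv,\bv,\bv)=0$, the pressure being recovered from the inf-sup condition \eqref{eq:infsupvel}; one then solves the temperature equation and iterates. The coercivity and growth hypotheses in Assumption~\ref{as:explicit_S}, with the inf-sup conditions, give the uniform bounds $\|\bu^n\|_{\Wonep{r}}+\|\Scr(\BD(\bu^n),\theta^n)\|_{\Lp{r'}}+\|\theta^n\|_{\Hone[]}+\|p^n\|_{\Lp{\tilde{r}}}\le c$; the pressure is controlled only in $L^{\tilde{r}}$ rather than $L^{r'}$ because the convective term forces the test functions into $\Wonep{\tilde{r}'}$. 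These bounds yield, up to a subsequence, the weak convergences in the statement, say $\Scr(\BD(\bu^n),\theta^n)\rightharpoonup\BS$ weakly in $\Lsym{r'}$, and---thanks to the compact embeddings $\Hone[]\hookrightarrow\hookrightarrow\Lp{p}$ and $\Wonep{r}\hookrightarrow\hookrightarrow\Lp{2}$, which require exactly $r>\tfrac{2d}{d+2}$---strong convergence of $\theta^n,\bu^n$ in Lebesgue spaces and a.e.\ convergence $\theta^n\to\theta$.

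With these convergences in hand one passes to the limit in the mass equation (so that $\diver\bu=0$ pointwise, using $\Pi^n_M$), in the temperature equation (using $c_1\le\hat{\kappa}\le c_2$ and a.e.\ convergence of $\theta^n$, so that $\hat{\kappa}(\theta^n)\nabla\eta\to\hat{\kappa}(\theta)\nabla\eta$ strongly against $\nabla\theta^n\rightharpoonup\nabla\theta$, together with the cancellation property of $\mathcal{C}$), and in the discrete momentum equation tested against $\Pi^n_V\bv$ for $\bv\in\Cinfd$; there the convective term converges because $\bu^n\to\bu$ strongly in $\Lp{2}$ and $\pi^n\bu^n\to\bu$ by the consistency of the reconstruction operator (Assumption~\ref{as:reconstruction_operator}). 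This produces the limiting momentum balance, but with the as-yet-unidentified stress $\BS$.

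The main obstacle is the final identification $\BS=\Scr(\BD(\bu),\theta)$ a.e. When $r\ge\tfrac{3d}{d+2}$ the velocity $\bu$ is an admissible test function and one argues exactly as in \eqref{eq:limsup_temp_dependent}; for the full range $r>\tfrac{2d}{d+2}$ this fails, and instead I would test the discrete momentum equation with a discrete Lipschitz truncation of the error $\be^n:=\Pi^n_V\bu-\bu^n\in\Vndiv$. Using the local $W^{1,1}$-stability of Assumption~\ref{as:Projector_Velocity_local} one constructs, at truncation level $\lambda$, functions $\be^n_\lambda\in\Vndiv$ that are Lipschitz with $\|\nabla\be^n_\lambda\|_{L^\infty}\le c\lambda$ and differ from $\be^n$ only on a set of small measure. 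Testing with the reconstruction $\pi^n\be^n_\lambda$ (which stays divergence-free, so the pressure term drops out), the buoyancy and convective contributions are shown to be negligible as $n\to\infty$ and then $\lambda\to\infty$; here the Lipschitz bound and the strong $\Lp{2}$ convergence of $\bu^n$ are essential, and $\pi^n$ is what keeps the convective term controlled without demanding $r>\tfrac{2d}{d+1}$. This yields
\[
\limsup_{n\to\infty}\int_\Omega \Scr(\BD(\bu^n),\theta^n)\twodot\bigl(\BD(\bu^n)-\BD(\bu)\bigr)\le 0
\]
on the ``good'' sets where no truncation occurs, and hence globally by exhaustion. I would then run Minty's trick at frozen temperature: monotonicity gives $\int(\Scr(\BD(\bu^n),\theta^n)-\Scr(\btau,\theta^n))\twodot(\BD(\bu^n)-\btau)\ge0$ for any $\btau\in{\Lp{\infty}}^{d\times d}$; passing to the limit, using a.e.\ convergence $\theta^n\to\theta$ with the continuity and growth of $\Scr$ (so that $\Scr(\btau,\theta^n)\to\Scr(\btau,\theta)$ strongly in $\Lp{r'}$), gives $\int(\BS-\Scr(\btau,\theta))\twodot(\BD(\bu)-\btau)\ge0$; choosing $\btau=\BD(\bu)\mp t\bm{\zeta}$ and letting $t\downarrow0$ identifies $\BS=\Scr(\BD(\bu),\theta)$ a.e. The tracelessness $\BS\in\Lsymtr{r'}$ then follows from the compatibility condition in Assumption~\ref{as:explicit_S}, completing the proof. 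The delicate, work-heavy part is the truncation construction and the verification that the stray terms vanish; everything else is routine given the projector and reconstruction machinery.
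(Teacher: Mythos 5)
Your proposal follows essentially the same route as the paper: the same \emph{a priori} bounds and weak limits, the reconstruction operator to handle the convective term, the discrete Lipschitz truncation of the error to obtain the limsup inequality $\limsup_n\int_\Omega \Scr(\BD(\bu^n),\theta^n)\twodot\BD(\bu^n)\le\int_\Omega\BS\twodot\BD(\bu)$, and Minty's trick at frozen temperature using the growth condition and dominated convergence to get $\Scr(\btau,\theta^n)\to\Scr(\btau,\theta)$ strongly in $L^{r'}(\Omega)^{d\times d}$. The only place the paper is slightly more careful is the convective term, where it upgrades the strong convergence of $\bu^n$ and $\pi^n\bu^n$ to $L^{2+\varepsilon}(\Omega)^d$ for some small $\varepsilon>0$ (exploiting the strictness of $r>\tfrac{2d}{d+2}$ and a local inverse inequality for the reconstruction), since the test function $\Pi^n_V\bv$ is controlled only in $W^{1,s}$ for finite $s$, so plain $L^2$ convergence of the velocities would not quite suffice.
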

\begin{proof}
	The proof is entirely analogous to the proofs of Lemma \ref{lm:existence_discrete} and Theorem \ref{thm:convergence_temp_dependent}, with a couple of small differences. Firstly, the \emph{a priori} estimate \eqref{eq:apriori} changes to
	\begin{equation}
		\|\bu^n\|_{\Wonep{r}} + \|\theta^n\|_{\Hone[]}
		+ \|p^n\|_{\Lp{\tilde{r}}}  + \|\BS^n\|_{\Lp{r'}} \leq c,
	\end{equation}
which implies the desired weak convergences. On the other hand, since $r>\frac{2d}{d+2}$, for a small enough $\varepsilon>0$ we have that $r>\frac{(2+\varepsilon)d}{d + (2+\varepsilon)}$, which implies that $\bu^n\to\bu$ strongly in ${\Lp{2+\varepsilon}}^d$ as $n\to \infty$. Furthermore, from the consistency condition in Assumption \ref{as:reconstruction_operator} we see that
\begin{equation*}
	\|\pi^n\bu^n - \bu\|_{L^{2+\varepsilon}(K)} \leq \|\bu^n - \bu\|_{L^{2+\varepsilon}(K)}
	+ c h_K^{1 + d(\frac{1}{2+\varepsilon} - \frac{1}{r})}\|\bu^n\|_{W^{1,r}(K)},
\end{equation*}
where we have used a standard local inverse inequality; the exponent of $h_K$ is positive by the choice of $\varepsilon$, which implies that $\pi^n\bu^n \to \bu$ strongly in ${\Lp{2+\varepsilon}}^d$ as $n\to\infty$. This is enough to pass to the limit in the convective term:
\begin{equation}
	\tilde{\mathcal{B}}_n(\bu^n,\bu^n,\Pi^n\bv) \xrightarrow[\: n\to\infty \:]{} -\int_\Omega \bu\otimes \bu \twodot \BD(\bv),
\end{equation}
for any $\bv\in W_0^{1,(\frac{2+\varepsilon}{2})'}(\Omega)^d$.
As for the identification of the constitutive relation, by testing the discrete momentum equation with the discrete Lipschitz truncation of the error $\be^n:=\bu-\bu^n$ it is possible to prove that (see \cite{Tscherpel2018} for a similar argument)
\begin{equation}\label{eq:limsup_explicit}
	\limsup_{n\to\infty} \int_\Omega \Scr(\BD(\bu^n),\theta^n)\twodot \BD(\bu^n) \leq
	\int_\Omega \BS\twodot \BD(\bu).
\end{equation}
Furthermore, from the growth condition of $\Scr$ and the dominated convergence theorem (note that, up to a subsequence, we have that $\theta^n\to\theta$ almost everywhere, c.f.\ \eqref{eq:strong_convergence}) we see that, for any $\btau\in \Lsym{r}$,
\begin{equation}\label{eq:strong_convergence_explicit}
\Scr(\btau,\theta^n) \to \Scr(\btau,\theta)\quad \text{strongly in }{\Lp{r'}}^{d\times d},
\end{equation}
as $n\to\infty$. Combining the monotonicity of $\Scr$ with \eqref{eq:limsup_explicit} and \eqref{eq:strong_convergence_explicit} yields for an arbitrary $\btau\in {\Lsym{r}}$:
\begin{align*}
	0 &\leq \limsup_{n\to\infty} \int_\Omega (\Scr(\BD(\bu^n),\theta^n) - \Scr(\btau,\theta^n))\twodot (\BD(\bu^n) - \btau)\\
	  &\leq \int_\Omega (\BS - \Scr(\btau,\theta))\twodot (\BD(\bu^n) - \btau).
\end{align*}
Choosing $\btau = \BD(\bu) \pm \varepsilon \bsigma$ with an arbitrary $\bsigma\in C_0^\infty(\Omega)^{d\times d}$ and letting $\varepsilon\to 0$ concludes the proof.
\end{proof}

\begin{remark}
	The use of the discrete Lipschitz truncation is only necessary when the velocity $\bu$ is not an admissible test function in the momentum equation, which occurs when $r<\frac{3d}{d+2}$. If $r\geq \frac{3d}{d+2}$ then one can substitute Assumption \ref{as:Projector_Velocity_local} with Assumption \ref{as:Projector_Velocity}. It is also important to note that if the trilinear form $\mathcal{B}$ is used instead, the stronger assumption $r>\frac{2d}{d+1}$ is required (see \eqref{eq:bound_convective_skew}).
\end{remark}

\begin{remark}
	If the constitutive relation can be written in the form $\BD(\bu) = \Dcr(\BS,\theta)$, where $\Dcr$ satisfies analogous conditions to the ones stated in Assumption \ref{as:explicit_S}, then the corresponding 4-field formulation will also satisfy an analogous convergence result. An example of a constitutive relation captured by these assumptions is the Ostwald--de Waele power-law model with $r>\frac{2d}{d+2}$:
	\begin{align*}
 		\Scr(\BD,\theta) &:= K(\theta)|\BD|^{r-2}\BD,\\
		\Dcr(\BS,\theta) &:= \frac{1}{K(\theta)}\left|\frac{\BS}{K(\theta)}\right|^{r'-2}\BS,
	\end{align*}
	where $K:\R\to\R$ is a continuous function satisfying $c_1\leq K(s) \leq c_2$ for any $s\in\R$, where $c_1,c_2$ are two positive constants.
\end{remark}

As mentioned in Section \ref{sec:implcr}, if the rheological parameters are not temperature-dependent, the convergence result can cover very general constitutive relations defined by maximal monotone $r$-graphs (which include, for instance, Herschel--Bulkley fluids). For this problem let us define Formulation $\textrm{C}_{0}$ in exactly the same way as Formulation A$_0$, but replacing \eqref{eq:weak_form_constrel} with
\begin{equation}
	\BH(\cdot,\BS,\BD(\bu)) = \bm{0} \qquad \text{ a.e. in }\Omega.
\end{equation}

In order to introduce the finite element formulation, the only necessary ingredient is an approximation to the graph $\mathcal{A}$, for which a result analogous to Lemma \ref{lm:minty} holds. This is the case e.g.\ for the generalised Yosida approximation described in \cite{Tscherpel2018}:
\begin{equation}\label{eq:approximation_Yosida}
	\Dcr^n(x,\BS):= \{\BD\in \Rds\, :\, (\BD,\BS)\in \mathcal{A}^n(x)\},
\end{equation}
where the approximate graph $\mathcal{A}^n$ is defined as follows
\begin{equation}
	\mathcal{A}^n(x):= \{\left(\BD ,\BS + \tfrac{1}{n}|\BD|^{r-2}\BD  \right)\in \Rds\times \Rds \, :\, (\BD,\BS)\in \mathcal{A}(x) \},
\end{equation}
where $x\in\Omega$. The relation \eqref{eq:approximation_Yosida} defines in fact a single-valued function that can be employed in the definition of the finite element formulation. Formulation $\tilde{\mathrm{C}}_0^n$ is then defined in the same way as Formulation $\tilde{\mathrm{B}}^n_0$, but with $\Dcr^n(\BS^n,\theta^n)$ replaced by \eqref{eq:approximation_Yosida}. However, it is worth pointing out that in the numerical computations one can simply work with the implicit function directly by writing
\begin{equation}
	\int_\Omega \BH(\cdot,\BS^n,\BD(\bu^n))\twodot \btau = 0 \quad \forall\,\btau\in\Sigma^n,
\end{equation}
instead of \eqref{eq:discrete_CR_B0}.

\begin{corollary}\label{cor:convergence_constant_parameters}
	Let $r>\frac{2d}{d+2}$ and let $\BH\colon \Omega\times \Rds\times \Rds\to \Rds$ be a function satisfying Assumption \ref{as:rgraph}. Suppose that $\{\Sigma^n,U^n,V^n,M^n\}_{n\in\mathbb{N}}$ is a family of finite element subspaces satisfying Assumptions \ref{as:approximability}, \ref{as:Projector_Stress}, \ref{as:Projectors_PressureTemp}, \ref{as:reconstruction_operator}, and \ref{as:Projector_Velocity_local}. Then, for any $n\in\mathbb{N}$, Formulation $\tilde{\mathrm{C}}^n_0$ admits a solution $(\BS^n,\theta^n,\bu^n,p^n)\in \Sigma^n\times (\hat{\theta}^n_b+ U^n)\times V^n \times M^n$, and we have, up to subsequences, that
	\begin{align*}
		\bu^n &\rightharpoonup \bu &\text{weakly in }\Wonep{r},\\
		p^n &\rightharpoonup p &\text{weakly in }\Lp{\tilde{r}},\\
		\theta^n &\rightharpoonup \theta &\text{weakly in }H^1(\Omega),\\
		\BS^n &\rightharpoonup \BS &\text{weakly in }\Lsym{r'},
	\end{align*}
	where $(\BS,\theta,\bu,p)\in \Lsymtr{r'}\times(\hat{\theta}_b +\SobsH{1}{\Gamma_D}{})\times W^{1,r}_0(\Omega)^d\times \Lmean{\tilde{r}}$ is a solution of Formulation $\mathrm{C}_0$.
\end{corollary}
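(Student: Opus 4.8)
The plan is to recycle almost verbatim the machinery of Lemma~\ref{lm:existence_discrete}, Theorem~\ref{thm:convergence_temp_dependent} and Corollary~\ref{cor:explicit_relation_convergence}, the single genuinely new ingredient being the identification of the limiting constitutive relation, which must now be extracted from the multivalued graph $\mathcal{A}$ rather than from an explicit function. Since the regularised graph $\mathcal{A}^n$ of \eqref{eq:approximation_Yosida} is single-valued, continuous, monotone and (uniformly in $n$) $r$-coercive---the extra term $\tfrac1n|\BD|^{r-2}\BD$ serving only to make it single-valued and to enhance coercivity---the Brouwer fixed-point argument of Lemma~\ref{lm:existence_discrete} applies unchanged and produces a solution $(\BS^n,\theta^n,\bu^n,p^n)$ of Formulation $\tilde{\mathrm{C}}^n_0$, the pressure being recovered from \eqref{eq:infsupvel} and the velocity controlled by the stress through \eqref{eq:infsupstress}.

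Testing the momentum equation with $\bu^n$ (so that $\tilde{\mathcal{B}}_n(\bu^n,\bu^n,\bu^n)=0$ and the pressure term drops) and the temperature equation with $\theta^n-\hat{\theta}^n_b$, and invoking the $r$-coercivity of Assumption~\ref{as:rgraph} together with Korn's and Poincar\'e's inequalities, I would obtain the uniform bound $\|\bu^n\|_{\Wonep{r}}+\|\theta^n\|_{\Hone[]}+\|p^n\|_{\Lp{\tilde r}}+\|\BS^n\|_{\Lp{r'}}\le c$; here the buoyancy coupling $\int_\Omega\theta^n\bu^n\cdot\be_d$ closes exactly as in Lemma~\ref{lm:existence_discrete}, since $r>\tfrac{2d}{d+2}$ embeds $\Wonep{r}$ into $L^2(\Omega)^d$. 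The stated weak convergences then follow on passing to a subsequence. Passing to the limit in the mass, momentum and temperature equations is identical to Corollary~\ref{cor:explicit_relation_convergence}: Rellich compactness gives $\bu^n\to\bu$ in ${\Lp{2+\varepsilon}}^d$ and $\theta^n\to\theta$ in $\Lp{s}$ (for $s<2^*$) and a.e., which handles both the buoyancy term and, via the reconstruction operator of Assumption~\ref{as:reconstruction_operator}, the convective forms $\tilde{\mathcal{B}}_n,\tilde{\mathcal{C}}_n$. Crucially, testing the discrete momentum equation with the discrete Lipschitz truncation of $\be^n:=\bu-\bu^n$ (this is where Assumption~\ref{as:Projector_Velocity_local} enters when $r<\tfrac{3d}{d+2}$) yields, exactly as in \eqref{eq:limsup_explicit}, the estimate $\limsup_{n\to\infty}\int_\Omega\BS^n\twodot\BD(\bu^n)\le\int_\Omega\BS\twodot\BD(\bu)$.

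The heart of the argument, and the step I expect to be the main obstacle, is the identification $(\BD(\bu),\BS)\in\mathcal{A}(\cdot)$ a.e. I would run Minty's method on the graph. Set $\tilde{\BD}^n:=\Dcr^n(\cdot,\BS^n)$, so that by construction $(\tilde{\BD}^n,\BS^n_*)\in\mathcal{A}(\cdot)$ pointwise, with $\BS^n_*:=\BS^n-\tfrac1n|\tilde{\BD}^n|^{r-2}\tilde{\BD}^n$. Choosing $\btau=\BS^n\in\Sigma^n$ in the discrete constitutive relation \eqref{eq:discrete_CR_B0} gives the clean identity $\int_\Omega\tilde{\BD}^n\twodot\BS^n=\int_\Omega\BD(\bu^n)\twodot\BS^n$, whence $\int_\Omega\BS^n_*\twodot\tilde{\BD}^n=\int_\Omega\BS^n\twodot\BD(\bu^n)-\tfrac1n\int_\Omega|\tilde{\BD}^n|^r\le\int_\Omega\BS^n\twodot\BD(\bu^n)$, and combining this with the $\limsup$ estimate above yields $\limsup_{n}\int_\Omega\BS^n_*\twodot\tilde{\BD}^n\le\int_\Omega\BS\twodot\BD(\bu)$. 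The $r$-coercivity bounds $\tilde{\BD}^n$ in $\Lsym{r}$, so (as in Theorem~\ref{thm:convergence_temp_dependent}) $\tilde{\BD}^n\rightharpoonup\BD(\bu)$ weakly in $\Lsym{r}$ and, since $\||\tilde{\BD}^n|^{r-2}\tilde{\BD}^n\|_{\Lp{r'}}=\|\tilde{\BD}^n\|_{\Lp{r}}^{r-1}$ is bounded, also $\BS^n_*\rightharpoonup\BS$ weakly in $\Lsym{r'}$. For any measurable selection $(\hat{\BD},\hat{\BS})$ of $\mathcal{A}$ with $\hat{\BD}\in\Lp{r}$, $\hat{\BS}\in\Lp{r'}$, monotonicity of $\mathcal{A}$ gives $\int_\Omega(\BS^n_*-\hat{\BS})\twodot(\tilde{\BD}^n-\hat{\BD})\ge0$; taking the $\limsup$ and using the three facts just listed produces $\int_\Omega(\BS-\hat{\BS})\twodot(\BD(\bu)-\hat{\BD})\ge0$ for every such selection.

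It remains to localise this family of integral inequalities to a pointwise statement, and this is the delicate point. Using the $\mathcal{L}(\Omega)$--measurability of $x\mapsto\mathcal{A}(x)$ in Assumption~\ref{as:rgraph}, the fact that $(\bm{0},\bm{0})\in\mathcal{A}(x)$, and a measurable-selection argument (of biting/Young-measure type, as in \cite{Bulicek:2012,Tscherpel2018}), one upgrades the inequalities to $(\BS(x)-\hat{\BS})\twodot(\BD(\bu)(x)-\hat{\BD})\ge0$ for a.e.\ $x$ and all $(\hat{\BD},\hat{\BS})\in\mathcal{A}(x)$; maximal monotonicity of $\mathcal{A}(x)$ then forces $(\BD(\bu)(x),\BS(x))\in\mathcal{A}(x)$, i.e.\ $\BH(\cdot,\BS,\BD(\bu))=\bm{0}$ a.e. Finally the compatibility property of Assumption~\ref{as:rgraph} gives $\tr\BS=0$, so that $\BS\in\Lsymtr{r'}$ and the limit is a solution of Formulation $\mathrm{C}_0$, completing the proof.
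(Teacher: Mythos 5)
Your overall skeleton matches what the paper intends: the paper gives no standalone proof of this corollary, but justifies it by the preceding results together with the observation that the generalised Yosida approximation \eqref{eq:approximation_Yosida} admits a Minty-type identification lemma ``analogous to Lemma \ref{lm:minty}'' (cited from \cite{Tscherpel2018}). Your existence argument, \emph{a priori} bounds, treatment of the convective terms via the reconstruction operator, and the $\limsup$ estimate via the discrete Lipschitz truncation all align with Lemma \ref{lm:existence_discrete}, Theorem \ref{thm:convergence_temp_dependent} and Corollary \ref{cor:explicit_relation_convergence}, and your manipulation of $\tilde{\BD}^n$, $\BS^n_*$ and the discrete constitutive relation (testing with $\btau=\BS^n$, discarding $-\tfrac1n\int|\tilde{\BD}^n|^r\le 0$, and killing the perturbation in $L^{r'}$) is sound.

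The genuine gap is in your final localisation. From the global inequality $\int_\Omega(\BS-\hat{\BS})\twodot(\BD(\bu)-\hat{\BD})\ge 0$, valid for every admissible measurable selection $(\hat{\BD},\hat{\BS})$ of $\mathcal{A}$, you cannot directly conclude the pointwise inequality: the natural attempt --- pick a selection realising negativity on a set $E$ of positive measure and extend it by $(\bm{0},\bm{0})$ off $E$ --- leaves the uncontrolled term $\int_{\Omega\setminus E}\BS\twodot\BD(\bu)$, which may be large and positive, so no contradiction results. What is actually needed is a \emph{local} version of the $\limsup$ estimate (equivalently, weak-$L^1$ or biting convergence of the products $\BS^n_*\twodot\tilde{\BD}^n$ on measurable subsets), and supplying that is precisely the content of the graph version of Lemma \ref{lm:minty} that the paper invokes. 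Your parenthetical appeal to ``biting/Young-measure type'' arguments points at the right literature but is doing all the real work at the one step that is new relative to Theorem \ref{thm:convergence_temp_dependent}; moreover, the remark following the corollary emphasises that the paper's identification argument deliberately \emph{avoids} Young measures, so reintroducing them is both a departure from the intended route and, as written, an unproved assertion rather than a proof. To close the argument you should either state and use the Minty-type lemma for the Yosida-approximated graph (as the paper does), or carry out the biting-lemma localisation in full.
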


\begin{remark}
When restricted to the isothermal case, the convergence result from Corollary \ref{cor:convergence_constant_parameters} improves the one presented in \cite{Diening:2013} in two respects: the graph is not required to be strictly monotone here, which allows models with a yield stress, for instance, and the result holds for the whole admissible range $r>\frac{2d}{d+2}$ even without the use of pointwise divergence-free elements, thanks to the modified convective term $\tilde{\mathcal{B}}_n$. In addition, the argument used here in the identification of the constitutive relation avoids the use of Young measures, simplifying the proof.
\end{remark}

\section{Augmented Lagrangian Preconditioner}\label{sec:AL_preconditioner}
In this section we employ the Scott--Vogelius pair for the velocity and pressure, and discontinuous and continuous elements for the stress and temperature, respectively, with $k\geq d$:
\begin{equation} \label{eq:sv}
	\begin{split}
	\Sigma^n &= \{\bsigma\in \Lsymtr{\infty} \, :\, \bsigma|_K\in\mathbb{P}_{k-1}(K)^{d\times d}\text{ for all }K\in \mathcal{T}_n\},\\
	U^n &= \{\eta\in W_{\Gamma_D}^{1,\infty}(\Omega)\, :\, \eta|_K\in\mathbb{P}_{k}(K)\text{ for all }K\in \mathcal{T}_n\},\\
	V^n &= \{\bw\in W^{1,\infty}_0(\Omega)^d\, :\, \bw|_K\in\mathbb{P}_{k}(K)^d\text{ for all }K\in \mathcal{T}_n\},\\
M^n &= \{q\in L_0^\infty(\Omega)\, :\, q|_K\in \mathbb{P}_{k-1}(K)\text{ for all }K\in \mathcal{T}_n\}.
\end{split}
\end{equation}
In order to ensure the inf-sup stability of the velocity-pressure pair, each level $\mathcal{T}_n$ of the mesh hierarchy is barycentrically refined, with the hierarchy itself constructed by uniform refinement, to prevent the appearance of degenerate elements (Figure \ref{fig:mesh_hierarchy}). A drawback of this approach is that the resulting mesh hierarchy is non-nested, which introduces some difficulties when dealing with the transfer operators in the multigrid algorithm.

\begin{figure}
\begin{center}
\begin{tikzpicture}[scale=1.8]
\usetikzlibrary{positioning}
	\node (A) [] at (-1,1){
		\begin{tikzpicture}[scale=2]
			\draw[line width=1.5pt] (-0.5,-0) -- (0.5,0) -- (0.,0.866) -- cycle;
		\end{tikzpicture}
		};
	\node (B) [] at (1,1) {
		\begin{tikzpicture}[scale=2]
			\draw[line width=1.5pt] (-0.5,-0) -- (0.5,0) -- (0.,0.866) -- cycle;
			\draw[line width=1.5pt] (-0,0.015) -- (0.235,0.425) -- (-0.235,0.425) -- cycle;
		\end{tikzpicture}
		};
	\draw [->] (A) -- (B);
	\node (A_bary) [] at (-1,-0.6){
		\begin{tikzpicture}[scale=2]
			\draw[line width=1.5pt] (-0.5,-0) -- (0.5,0) -- (0.,0.866) -- cycle;
			\begin{scope}[style=dashed]
				\coordinate (bar1) at (-0.5,0);
				\coordinate (bar2) at (0.5,0);
				\coordinate (bar3) at (0,0.866);
				\draw (barycentric cs:bar1=0.5,bar2=0.5,bar3=0.5) -- (barycentric cs:bar1=1,bar2=0,bar3=0);
				\draw (barycentric cs:bar1=0.5,bar2=0.5,bar3=0.5) -- (barycentric cs:bar1=0,bar2=1,bar3=0);
				\draw (barycentric cs:bar1=0.5,bar2=0.5,bar3=0.5) -- (barycentric cs:bar1=0,bar2=0,bar3=1);
			\end{scope}
		\end{tikzpicture}
		};
	\node (B_bary) [] at (1,-0.6) {
		\begin{tikzpicture}[scale=2]
			\draw[line width=1.5pt] (-0.5,-0) -- (0.5,0) -- (0.,0.866) -- cycle;
			\draw[line width=1.5pt] (-0,0.015) -- (0.235,0.425) -- (-0.235,0.425) -- cycle;
			\begin{scope}[style=dashed]
			\foreach \x in {-0.5,0.}{
				\coordinate (bar1) at (\x,0);
				\coordinate (bar2) at (\x+0.5,0);
				\coordinate (bar3) at (\x+0.25,0.433);
				\draw (barycentric cs:bar1=0.5,bar2=0.5,bar3=0.5) -- (barycentric cs:bar1=1,bar2=0,bar3=0);
				\draw (barycentric cs:bar1=0.5,bar2=0.5,bar3=0.5) -- (barycentric cs:bar1=0,bar2=1,bar3=0);
				\draw (barycentric cs:bar1=0.5,bar2=0.5,bar3=0.5) -- (barycentric cs:bar1=0,bar2=0,bar3=1);
			}
				\coordinate (bar1) at (0.,0);
				\coordinate (bar2) at (-0.25,0.433);
				\coordinate (bar3) at (0.25,0.433);
				\draw (barycentric cs:bar1=0.5,bar2=0.5,bar3=0.5) -- (barycentric cs:bar1=1,bar2=0,bar3=0);
				\draw (barycentric cs:bar1=0.5,bar2=0.5,bar3=0.5) -- (barycentric cs:bar1=0,bar2=1,bar3=0);
				\draw (barycentric cs:bar1=0.5,bar2=0.5,bar3=0.5) -- (barycentric cs:bar1=0,bar2=0,bar3=1);
				\coordinate (bar1) at (0,0.866);
				\coordinate (bar2) at (-0.25,0.433);
				\coordinate (bar3) at (0.25,0.433);
				\draw (barycentric cs:bar1=0.5,bar2=0.5,bar3=0.5) -- (barycentric cs:bar1=1,bar2=0,bar3=0);
				\draw (barycentric cs:bar1=0.5,bar2=0.5,bar3=0.5) -- (barycentric cs:bar1=0,bar2=1,bar3=0);
				\draw (barycentric cs:bar1=0.5,bar2=0.5,bar3=0.5) -- (barycentric cs:bar1=0,bar2=0,bar3=1);
			\end{scope}
		\end{tikzpicture}
		};
	\draw [->] (A_bary) -- (B_bary);
	\draw [->] (A) -- (A_bary);
	\draw [->] (B) -- (B_bary);
\end{tikzpicture}
\end{center}
\caption{\label{fig:mesh_hierarchy}
	Non-nested two-level barycentrically refined mesh hierarchy.}
\end{figure}
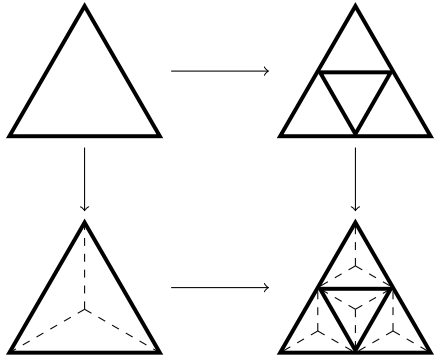
As mentioned in Section \ref{sc:FEM_spaces}, this choice of finite element space for the stress satisfies the inf-sup condition \eqref{eq:infsupstress}. In fact, since this discretisation has the property that discretely divergence-free velocities are exactly divergence-free, one can work with a traceless approximation for the \txtb{deviatoric} stress and hence fewer degrees of freedom will be required (c.f.\ \cite{FarrellGazca2019}). This exact enforcement of the divergence constraint was one of the motivations behind our choice of elements; it is known that a failure to enforce the divergence-free constraint appropriately can lead to unphysical behaviour in the solution of buoyancy-driven flow \cite{John2017}.

At this point the viscous dissipation and the adiabatic heating terms can be incorporated into the formulation. For instance, when working with the setting described by Corollary \ref{cor:explicit_relation_convergence}, in the finite element formulation we seek $(\theta^n,\bu^n,p^n)\in (\hat{\theta}_b + U^n)\times V^n\times M_0^n$ such that
\begin{subequations}\label{eq:FEM_viscous_dissipation}
\begin{alignat}{2}
	\int_\Omega \Scr(\BD(\bu^n),\theta^n)\twodot\BD(\bv)
	-\int_\Omega &(\bu^n\otimes \bu^n) \twodot \BD(\bv)
	-\int_\Omega p^n  \diver\bv = \int_\Omega \theta^n \bv\cdot\be_d \: & \forall\,\bv\in V^n, \notag \\
	&-\int_\Omega q  \diver\bu^n = 0 & \forall\, q\in M^n,\\
	\int_\Omega (\hat{\kappa}(\theta^n)\nabla\theta^n - \bu^n\theta^n)\cdot \nabla\eta
	&+ \int_\Omega \theta^n\bu^n\cdot\be_d\eta  = \int_\Omega \Scr(\BD(\bu^n),\theta^n)\twodot\BD(\bu^n)\eta & \forall\, \eta \in U^n,\notag
\end{alignat}
\end{subequations}
with analogous modifications for the other formulations. Note that the form of the convective term has been simplified since the elements are exactly divergence-free. The nonlinear finite element formulations are linearised using Newton's method; for instance, if the current guess for the solution of \eqref{eq:FEM_viscous_dissipation} is $(\tilde{\theta},\tilde{\bu},\tilde{p})$, then the method is defined by the correction step $(\tilde{\theta},\tilde{\bu},\tilde{p}) \mapsto (\tilde{\theta},\tilde{\bu},\tilde{p}) + (\theta,\bu,p)$ where $(\theta,\bu,p)$ is the solution of a linear system whose matrix has the block structure
\begin{equation}\label{eq:block_structure_3field}
	\begin{bmatrix}
		A_1&C&0\\
		E& A_2&\tilde{B}^\top\\
		0&\tilde{B}&0
 \end{bmatrix}\begin{bmatrix}
 \theta \\ \bu\\ p
 \end{bmatrix}.
\end{equation}
The blocks in \eqref{eq:block_structure_3field} are defined through the linear operators:
\begin{subequations}
	\begin{align*}
		\langle A_1 \theta,\eta \rangle := \int_\Omega \theta \hat{\kappa}'(\tilde{\theta}) \nabla \tilde{\theta}\cdot\nabla \eta
		&+ \int_\Omega \hat{\kappa}(\tilde{\theta})\nabla \theta \cdot\nabla \eta
		-\int_\Omega \tilde{\bu}\theta\cdot \nabla \eta &&  \\
								+\int_\Omega \tilde{\bu}\theta \cdot \be_d\eta
		-&\int_\Omega \Scr_\theta (\BD(\tilde{\bu}),\tilde{\theta})\twodot \BD(\tilde{\bu})\theta\eta
						&& \forall\, \theta,\eta\in U^n, \notag\\
		\langle C \bu, \eta\rangle :=
		\int_\Omega \tilde{\theta}\bu\cdot(\be_d\eta - \nabla\eta&)
		- \int_\Omega \Scr_{\BD}(\BD(\tilde{\bu}),\tilde{\theta})\BD(\bu)\twodot\BD(\bv)\eta  &&\\
	- \int_\Omega \Scr(\BD(\tilde{\bu}),&\tilde{\theta})\twodot\BD(\bu)\eta && \forall\, \bu\in V^n,\eta\in U^n,\\
   \langle E\theta, \bv\rangle :=  \int_\Omega \Scr_\theta(\BD(&\tilde{\bu}),\tilde{\theta})\theta\twodot \BD(\bv)
   - \int_\Omega \theta \bv\cdot\be_d
				  &&\forall\,\theta\in U^n,\bv\in V^n,\\
   \langle A_2\bu,\bv\rangle :=
   \int_\Omega \left( \Scr_{\BD}(\BD(\tilde{\bu}),\tilde{\theta})\right.&\left.\BD(\bu)- \tilde{\bu}\otimes\bu - \bu\otimes \tilde{\bu} \right){:}\:\BD(\bv) && \forall\, \bu,\bv\in V^n,\\
   \langle \tilde{B}\bv,q\rangle &:=  -\int_\Omega q\diver\bv
				    &&\forall\, \bv\in V^n,q\in M^n.
	\end{align*}
	We use the notation $\Scr_{\BD},\Scr_\theta$ to denote the partial derivatives of $\Scr$; for instance, for the Navier--Stokes model one would have $\Scr_{\BD}(\BD(\tilde{\bu}),\tilde{\theta})= 2\hat{\mu}(\tilde{\theta})\bm{I}$ and $\Scr_\theta(\BD(\tilde{\bu}),\tilde{\theta}) = 2 \hat{\mu}'(\tilde{\theta})\BD(\tilde{\bu})$, where $\bm{I}$ is the fourth-order identity tensor.
\end{subequations}

\subsection{Robust relaxation and prolongation}
Keeping \eqref{eq:block_structure_3field} as an illustrative example, we see that after augmentation the top block can be written in the form
\begin{equation}\label{eq:top_block}
	A + \gamma B^{\top}M_p^{-1}B =
	\begin{bmatrix}
		A_1 & C \\ E & A_2
	\end{bmatrix}
	+ \gamma \begin{bmatrix}
		0 \\ \tilde{B}^\top
	\end{bmatrix}
	M_p^{-1}\begin{bmatrix}
		0 & \tilde{B}
	\end{bmatrix},
\end{equation}
where $A$ is invertible and $\gamma B^\top M_p^{-1}B$ is symmetric and semi-definite. Let us define $Z^n := U^n\times V^n$ whenever the 3-field formulation is employed and $Z^n := \Sigma^n \times U^n \times V^n$ otherwise. Relaxation methods in multigrid algorithms are often studied as subspace correction methods \cite{Xu1992,Xu2001}. Consider the space decomposition
\begin{equation}\label{eq:subspace_decomposition}
Z^n = \sum_i Z_i^n,
\end{equation}
where the sum is not necessarily direct. The key insight from \cite{Schoeberl1999,LWXZ:2007} is that, assuming $A$ is symmetric and coercive, the subspace correction method induced by the decomposition \eqref{eq:subspace_decomposition} will be robust in $\gamma$ if the decomposition stably captures the kernel $\mathcal{N}^n$ of the semi-definite term:
\begin{equation}
	\mathcal{N}^n = \sum_i Z^n_i\cap \mathcal{N}^n.
\end{equation}
Here $\mathcal{N}^n$ consists of the elements of the form $(\theta,\bv)^\top$ and $(\bsigma,\theta,\bv)^\top$ for the 3-field and 4-field formulations, respectively, where $\bv\in \Vndiv$, and $\bsigma\in \Sigma^n$, $\theta\in U^n$ are arbitrary.  This means that the decomposition must allow for sufficiently rich subspaces such that divergence-free velocities can be written as combinations of divergence-free elements of the subspaces. A local characterisation of the kernel of the divergence for Scott--Vogelius elements on meshes with the macro element structure shown in Figure \ref{fig:mesh_hierarchy} was presented in \cite{Farrella} and used to construct a preconditioner for a system of nearly incompressible elasticity; this construction was then employed in \cite{Farrell2020} and \cite{FarrellGazca2019} to precondition the isothermal Navier--Stokes system and a 3-field non-Newtonian formulation, respectively. In \cite{Farrella} it was shown that the kernel is captured by using a decomposition based on the subspaces
\begin{equation}\label{eq:decomposition_SV}
	Z^n_i := \{\bz\in Z^n\, :\, \supp(\bz)\subset \macrostar(q_i)  \},
\end{equation}
where for a vertex $q_i$, the macrostar patch $\macrostar(q_i)$ is defined as the union of all macro cells touching the vertex (Figure \ref{fig:macrostar}). In the algorithm presented here the relaxation solves based on the decomposition \eqref{eq:decomposition_SV} are performed additively.

The work of Sch\"{o}berl \cite{Schoeberl1999} also revealed the necessity of controlling the continuity constant of the prolongation operator in order to obtain a robust solver. In our setting, this entails ensuring that the prolongation operator $P_N\colon V^N \to V^n$ mapping coarse grid functions in $V^N$ into fine grid functions in $V^n$ has the property that divergence-free velocities get mapped to (nearly) divergence-free velocities; note that when using a standard prolongation based on interpolation, the condition $\diver\bv^N = 0$ does not necessarily imply that $\diver (P_N\bv^n)=0$. For the setting described here a modified prolongation operator can be defined by computing a correction using local Stokes solves on the macro cells (see \cite{Farrella} for details).

For the formulations including the stress there is an additional difficulty: it is not obvious how to transfer piecewise discontinuous fields between non-nested meshes. Here we employ the supermesh projection described in \cite{FarrellGazca2019}. For the temperature we employ a standard interpolation-based prolongation operator.

While the macrostar iteration mentioned above results in a robust relaxation scheme for the linear elasticity problem considered in \cite{Farrella}, on its own it ceases to be effective when applied to the substantially more complex problem \eqref{eq:top_block}. However, we find that a handful of GMRES iterations preconditioned by the macrostar iteration are very effective for the problem under consideration.

\begin{figure}
\begin{center}
\begin{tikzpicture}[scale=1.5]
\usetikzlibrary{shapes.geometric}
\begin{scope}
\draw[step=1cm] (-2,-1) grid (2,2);
\draw (-2,2) -- (1,-1);
\draw (-1,2) -- (2,-1);
\draw (0,2) -- (2,0);
\draw (1,2) -- (2,1);
\draw (-2,0) -- (-1,-1);
\draw (-2,-1) -- (-1,-1);
\draw (-2,1) -- (0,-1);
\end{scope}

\begin{scope}[style=help lines]
\foreach \x  in {0,1,-1,-2} {
\foreach \y  in {0,1,-1,-1} {
\coordinate (bar)   at (\x,\y);
\coordinate (barr) at (\x,\y + 1);
\coordinate (barrr)      at (\x + 1,\y);
\coordinate (cbar)   at (\x + 1,\y + 1);
\draw (barycentric cs:cbar=0.5,barr=0.5 ,barrr=0.5) -- (barycentric cs:cbar=1.,barr=0. ,barrr=0);
\draw (barycentric cs:cbar=0.5,barr=0.5 ,barrr=0.5) -- (barycentric cs:cbar=0.,barr=1 ,barrr=0);
\draw (barycentric cs:cbar=0.5,barr=0.5 ,barrr=0.5) -- (barycentric cs:cbar=0.,barr=0. ,barrr=1);
\draw (barycentric cs:bar=0.5,barr=0.5 ,barrr=0.5) -- (barycentric cs:bar=1.,barr=0. ,barrr=0);
\draw (barycentric cs:bar=0.5,barr=0.5 ,barrr=0.5) -- (barycentric cs:bar=0.,barr=1 ,barrr=0);
\draw (barycentric cs:bar=0.5,barr=0.5 ,barrr=0.5) -- (barycentric cs:bar=0.,barr=0. ,barrr=1);}
}
\end{scope}

\node[regular polygon, regular polygon sides=6, draw] [dashed, ultra thick, rounded corners=8pt, rotate=-45,xscale=10,yscale=6,magenta] at (-1,0){};
\filldraw [magenta] (-1,0) circle (1.5pt);
\node[regular polygon, regular polygon sides=6, draw] [dashed, ultra thick,rounded corners=8pt,rotate=-45,xscale=10,yscale=6.,magenta] at (1,1){};
\filldraw [magenta] (1,1) circle (1.5pt);
\end{tikzpicture}
\end{center}
\caption{\label{fig:macrostar}
Macrostar patches on a barycentrically refined mesh.}
\end{figure}
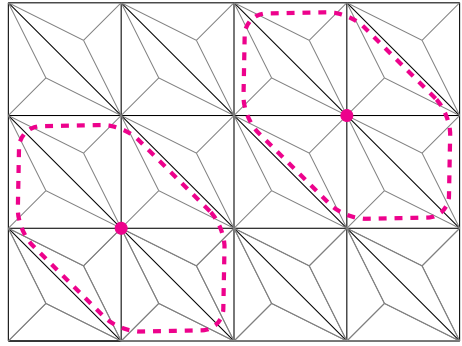

\section{Numerical experiments}\label{sec:experiments}
Let us suppose that the parameters in the constitutive relation \eqref{eq:implicit_CR} can be written as
\begin{equation}
	\frac{\hat{\mu}(\theta)}{\mu_0} = \mu(\theta),\quad
	\frac{\hat{\kappa}(\theta)}{\kappa_0} = \kappa(\theta),\quad
	\frac{\hat{\tau}(\theta)}{\tau_0} = \tau(\theta),\quad
	\frac{\hat{\sigma}(\theta)}{\sigma_0} = \sigma(\theta),\quad
\end{equation}
where $\mu_0,\kappa_0>0$ are reference values for the viscosity and heat conductivity, $\tau,\sigma\geq 0$ are reference values for the activation parameters, and $\mu,\kappa,\tau,\sigma$ are then non-dimensional functions.
In practice the Oberbeck-Boussinesq system \eqref{eq:PDE_OB} can be non-dimensionalised in distinct ways to give more importance to different physical regimes. For example, suppose that the time scale is chosen based on the diffusion of heat, and that the non-dimensional variables are introduced in the following way:
\begin{equation}
	\tilde{t}:= \frac{\alpha}{L^2}t,\quad
	\tilde{x}:= \frac{x}{L},\quad
	\tilde{\bu}:= \frac{L}{\alpha}\bu, \quad
	\tilde{p}:= \frac{L^2}{\rho_0\alpha^2}p,\quad
	\tilde{\theta} := \frac{\theta - \theta_C}{\theta_H - \theta_C},\quad
	\tilde{\BS}:= \frac{L^2}{\mu_0 \alpha}\BS,
\end{equation}
where $L$ is a characteristic length scale, $\theta_H$ is a reference temperature (e.g.~the temperature of the hot plate in a B\'{e}nard problem), and  $\alpha= \frac{\kappa_0}{\rho_0 c_p}$ is the thermal diffusion rate. The non-dimensional form of the system then reads (dropping the tildes):

\begin{subequations}\label{eq:PDE_OB_Rayleigh1}
	\begin{alignat}{2}
			- \Pr \diver \BS + \diver (\bu&\otimes\bu)
			+ \nabla p = \Ra\,\Pr\,\theta\be_d
			\quad & &\text{ in }\Omega,\\
			&\diver\bu = 0\quad & &\text{ in }\Omega,\\
			-\diver (\kappa(\theta)\nabla\theta)
			+ \diver(\bu & \theta )
			+ \Di(\theta + \Theta)\bu\cdot \be_d
			= \frac{\Di}{\Ra}\BS\twodot \BD(\bu)\quad & &\text{ in }\Omega,
	\end{alignat}
\end{subequations}
where the Rayleigh, Prandtl, Dissipation and Theta numbers are defined respectively as
\begin{equation}
	\Ra = \frac{\beta g (\theta_H - \theta_C)L^3}{\nu \alpha},\quad
	\Pr = \frac{\nu_0}{\alpha},\quad
	\Di = \frac{\beta g L }{c_p},\quad
\Theta =\frac{\theta_C}{\theta_H - \theta_C},\quad
\end{equation}
where $\nu_0:= \frac{\mu_0}{\rho_0}$ is the reference kinematic viscosity (more non-dimensional numbers could arise with a non-Newtonian constitutive relation). 
Alternatively, if one assumes that the gravitational potential energy is completely transformed into kinetic energy \cite{Hewitt1975,Ostrach1958}, the characteristic velocity is chosen as $U = (gL\beta(\theta_H - \theta_C))^{1/2}$ and the resulting non-dimensional system becomes
\begin{subequations}\label{eq:PDE_OB_Grashof}
	\begin{alignat}{2}
		- \frac{1}{\sqrt{\Gr}} \diver \BS + \diver &(\bu\otimes\bu)
			+ \nabla p = \theta\be_d
			\quad & &\text{ in }\Omega,\\
			&\diver\bu = 0\quad & &\text{ in }\Omega,\\
			-\frac{1}{\Pr\sqrt{\Gr}}\diver (\kappa(\theta)\nabla\theta)
			+ \diver(&\bu  \theta )
			+ \Di(\theta + \Theta)\bu\cdot \be_d
			= \frac{\Di}{\sqrt{\Gr}}\BS\twodot \BD(\bu)\quad & &\text{ in }\Omega,
	\end{alignat}
\end{subequations}
where the Grashof number is defined as
\begin{equation}
	\Gr = \frac{gL^3 \beta (\theta_H - \theta_C)}{\nu_0^2}.
\end{equation}

In the following section we will numerically investigate the convergence of Formulations $\textrm{A}_0^n$ and $\textrm{B}_0^n$, employing the non-dimensional form \eqref{eq:PDE_OB_Rayleigh1} (which includes the viscous dissipation terms) on a two-dimensional cavity problem by means of the method of manufactured solutions. In Section \ref{sc:heated_cavity} we will then examine the performance of the solver introduced in Section \ref{sec:AL_preconditioner} using the different forms \eqref{eq:PDE_OB_Rayleigh1} and \eqref{eq:PDE_OB_Grashof} with a heated cavity problem. The computational examples were implemented in Firedrake \cite{Rathgeber2016}, and PCPATCH \cite{Farrell} (a recently developed tool for subspace decomposition in multigrid in PETSc \cite{PETSc}) was employed for the macrostar patch solves in the multigrid algorithm. The augmented Lagrangian parameter was set to $\gamma=10^4$, and unless specified otherwise, the Newton solver was deemed to have converged when the Euclidean norm of the residual fell below $1\times 10^{-8}$ and the corresponding tolerance for the linear solver in 2D was set to $1\times 10^{-10}$ ($1\times 10^{-8}$ in 3D). In the implementation the uniqueness of the pressure was enforced by orthogonalizing against the nullspace of constants in the Krylov solver, instead of enforcing a zero mean condition.

\subsection{Convergence test}
We consider the exact solution
\begin{gather*}
	\bu_e(x,y) = \begin{pmatrix}
		2 y \sin(\pi x)\sin(\pi y)(x^2 -1) + \pi\sin(\pi x)\cos(\pi y)(x^2 -1)(y^2 -1)\\
		-2 x \sin(\pi x)\sin(\pi y)(y^2 -1) + \pi\cos(\pi x)\sin(\pi y)(x^2 -1)(y^2 -1)
	\end{pmatrix},\\
	p_e(x,y) = y^2 - x^2,\\
	\theta_e(x,y) = x^2 - y^4,
\end{gather*}
on $\Omega = (0,1)^2$.
The boundary data for the velocity and temperature are chosen so as to match the exact solution.
The constitutive relation is defined by a standard power-law
\begin{equation*}
	\BS = K(\theta)|\BD(\bu)|^{r-2}\BD(\bu),
\end{equation*}
where $r>1$, and the temperature dependence of the heat conductivity and consistency index are taken to be
\begin{align*}
	K(\theta) &:= \mathrm{e}^{-\frac{\theta}{4}},\\
	\kappa(\theta) &:= \mathrm{e}^{4\theta}.
\end{align*}

In this test we consider the system without augmentation (i.e.\ $\gamma = 0$) and solve the linear systems using a sparse direct solver from MUMPS \cite{MUMPS:1}. We employ the lowest order Taylor--Hood element for the velocity and pressure, and use continuous and discontinuous piecewise polynomials of degree 2 and 1 for the temperature and stress, respectively:

\begin{equation}
	\begin{split}
	\Sigma^n &= \{\bsigma\in \Lsym{\infty} \, :\, \bsigma|_K\in\mathbb{P}_{1}(K)^{2\times 2}\text{ for all }K\in \mathcal{T}_n\},\\
	U^n &= \{\eta\in  \theta_e + W_{0}^{1,\infty}(\Omega)\, :\, \eta|_K\in\mathbb{P}_{2}(K)\text{ for all }K\in \mathcal{T}_n\},\\
	V^n &= \{\bw\in \bu_e + W^{1,\infty}_0(\Omega)^2\, :\, \bw|_K\in\mathbb{P}_{2}(K)^2\text{ for all }K\in \mathcal{T}_n\},\\
	M^n &= \{q\in C(\overline{\Omega})\, :\, q|_K\in \mathbb{P}_{1}(K)\text{ for all }K\in \mathcal{T}_n\}.
\end{split}
\end{equation}

Tables \ref{tb:errors_Tup}--\ref{tb:errors_TSup2} show the computational errors and the experimental orders of convergence, which are defined as:
\begin{gather*}
	E^n_{\bu} := \|\bu^n - \bu_e\|_{L^r(\Omega)},\qquad
	EOC^n_{\bu} := \log_2(E^n_{\bu}/E^{n-1}_{\bu}), \\
	E^n_{p} := \|p^n - p_e\|_{L^{r'}(\Omega)},\qquad
	EOC^n_{p} := \log_2(E^n_{p}/E^{n-1}_{p}), \\
	E^n_{\theta} := \|\theta^n - \theta_e\|_{L^2(\Omega)},\qquad
	EOC^n_{\theta} := \log_2(E^n_{\theta}/E^{n-1}_{\theta}), \\
	E^n_{\BS} := \|\BS^n - \BS_e\|_{L^{r'}(\Omega)},\qquad
	EOC^n_{\BS} := \log_2(E^n_{\BS}/E^{n-1}_{\BS}). \\
\end{gather*}

\begin{table}
\centering
\captionsetup{justification=centering}
\begin{tabular}{c c c c c c}
\toprule
$h_n$ & {\# dofs} & $E^n_{\bu}$ & $EOC^n_{\bu}$ & $E^n_p$ & $EOC^n_p$ \\
\midrule
0.104 & $5.4\times 10^3$ & $4.014\times 10^{-4}$  & - & $1.124\times 10^{-3}$ &  - \\
0.052 & $2.1\times 10^4$ & $5.126\times 10^{-5}$  & 2.97 & $1.850\times 10^{-3}$ & -0.71 \\
0.026 & $8.3\times 10^4$ & $6.360\times 10^{-6}$  & 3.01 & $1.879\times 10^{-4}$ & 3.29 \\
0.013 & $3.3\times 10^5$ & $7.944\times 10^{-7}$  & 3.00 & $1.902 \times 10^{-5}$ & 3.30\\
0.006 & $1.3\times 10^6$ & $1.002\times 10^{-7}$  & 2.98 & $2.231\times 10^{-6}$ & 3.09 \\
  \bottomrule
\end{tabular}\\
\caption{\label{tb:errors_Tup}
Errors and experimental orders of convergence for the velocity and pressure obtained using Formulation $\textrm{A}^n_0$ with $r=3.5$, $\Ra = 10^4$, $\Di = 0.3$, $\Pr = 1$, and $\Theta = 0$.}
\end{table}

\begin{table}
\centering
\captionsetup{justification=centering}
\begin{tabular}{c c c c }
\toprule
$h_n$ & {\# dofs}  & $E^n_{\theta}$ & $EOC^n_{\theta}$\\
\midrule
0.104 & $5.4\times 10^3$  & $9.307\times 10^{-6}$ & - \\
0.052 & $2.1\times 10^4$  & $1.159\times 10^{-6}$ & 3.00 \\
0.026 & $8.3\times 10^4$  & $1.451\times 10^{-7}$ & 2.99 \\
0.013 & $3.3\times 10^5$  & $1.816\times 10^{-8}$ & 2.99 \\
0.006 & $1.3\times 10^6$  & $2.272\times 10^{-9}$ & 2.99 \\
  \bottomrule
\end{tabular}\\
\caption{\label{tb:errors_Tup2}
Errors and experimental orders of convergence for the temperature obtained using Formulation $\textrm{A}^n_0$ with $r=3.5$, $\Ra = 10^4$, $\Di = 0.3$, $\Pr = 1$, and $\Theta = 0$.}
\end{table}

\begin{table}
\centering
\captionsetup{justification=centering}
\begin{tabular}{c c c c c c}
\toprule
$h_n$ & {\# dofs} & $E^n_{\bu}$ & $EOC^n_{\bu}$ & $E^n_p$ & $EOC^n_p$ \\
\midrule
0.104 & $1.2\times 10^4$ & $2.405\times 10^{-4}$  & - & $1.632\times 10^{-3}$ &  - \\
0.052 & $4.9\times 10^4$ & $3.103\times 10^{-5}$  & 2.95 & $2.229\times 10^{-4}$ & 2.87 \\
0.026 & $1.9\times 10^4$ & $8.599\times 10^{-6}$  & 1.85 & $1.238\times 10^{-4}$ & 0.85 \\
0.013 & $7.8\times 10^5$ & $9.828\times 10^{-7}$  & 3.13 & $1.877 \times 10^{-5}$ & 2.72\\
0.006 & $3.1\times 10^6$ & $1.408\times 10^{-7}$  & 2.80 & $3.840\times 10^{-6}$ & 2.29 \\
  \bottomrule
\end{tabular}\\
\caption{\label{tb:errors_TSup}
Errors and experimental orders of convergence for the velocity and pressure obtained using Formulation $\textrm{B}^n_0$ with $r=1.6$, $\Ra = 10^4$, $\Di = 0.3$, $\Pr = 1$, and $\Theta = 0$.}
\end{table}

\begin{table}
\centering
\captionsetup{justification=centering}
\begin{tabular}{c c c c c c}
\toprule
$h_n$ & {\# dofs} & $E^n_{\theta}$ & $EOC^n_{\theta}$ & $E^n_{\BS}$ & $EOC^n_{\BS}$ \\
\midrule
0.104 & $1.2\times 10^4$ & $9.307\times 10^{-6}$  & - & $3.128$ &  - \\
0.052 & $4.9\times 10^4$ & $1.167\times 10^{-6}$  & 2.99 & $1.649\times 10^{-2}$ & 7.56 \\
0.026 & $1.9\times 10^4$ & $2.256\times 10^{-7}$  & 2.37 & $7.053\times 10^{-3}$ & 1.22 \\
0.013 & $7.8\times 10^5$ & $2.651\times 10^{-8}$  & 3.09 & $2.760 \times 10^{-3}$ & 1.35\\
0.006 & $3.1\times 10^6$ & $3.713\times 10^{-9}$  & 2.86 & $1.121\times 10^{-3}$ & 1.30 \\
  \bottomrule
\end{tabular}\\
\caption{\label{tb:errors_TSup2}
Errors and experimental orders of convergence for the temperature and stress obtained using Formulation $\textrm{B}^n_0$ with $r=1.6$, $\Ra = 10^4$, $\Di = 0.3$, $\Pr = 1$, and $\Theta = 0$.}
\end{table}

Given the generality of the implicitly constituted framework, one cannot expect higher regularity or uniqueness results in general (especially if the formulation includes viscous dissipation), and so there is little hope of guaranteeing convergence using error estimates. However, Tables \ref{tb:errors_Tup}--\ref{tb:errors_TSup2} show that the formulations introduced in this work can exhibit orders of convergence that are close to the orders one would anticipate, given the choice of elements.

\subsection{Heated cavity}\label{sc:heated_cavity}
The problem \eqref{eq:PDE_OB_Grashof} is solved on the unit square/cube $\Omega = (0,1)^d$ with boundary data
\begin{equation*}
	\bu = \bm{0} \quad \text{on }\partial\Omega,\quad
	\nabla\theta \cdot\bm{n} = 0 \quad\text{on }\partial\Omega\setminus(\Gamma_H\cup\Gamma_C),\quad
	\theta = \left\{\begin{array}{cc}
			1, & \text{on }\Gamma_H,\\
			0, & \text{on }\Gamma_C,
	\end{array}\right.
\end{equation*}
where $\Gamma_H := \{x_1 = 0\}$ and $\Gamma_C :=\{x_1 = 1\}$. For the problems with temperature-dependent viscosity and conductivity we choose the following functional dependences:
\begin{subequations}\label{eq:temp_dependence}
	\begin{align}
		\mu(\theta) &:= \mathrm{e}^{-\frac{\theta}{10}}, \label{eq:temp_dependent_viscosity}\\
		\kappa(\theta) &:= \frac{1}{2} + \frac{\theta}{2} + \theta^2.\label{eq:temp_dependent_conductivity}
	\end{align}
\end{subequations}
The viscosity defined by \eqref{eq:temp_dependent_viscosity} decreases with temperature, as is the case with most liquids \cite{Ferro2002}; heat conductivities of the form \eqref{eq:temp_dependent_conductivity} are a good fit for most liquid metals and gases \cite{Emery1999}. Let us denote the problem solved with $\mu(\theta)\equiv 1 \equiv \kappa(\theta)$ by (P1), the one using \eqref{eq:temp_dependent_viscosity} and $\kappa(\theta)\equiv 1$ by (P2), and by (P3) the one using both forms in \eqref{eq:temp_dependence}. As we wish to investigate the performance of the preconditioner proposed in Section \ref{sec:AL_preconditioner}, we employ the Scott--Vogelius element pair \eqref{eq:sv} with velocity degree $k \ge d$.

A simple continuation algorithm was used to reach the different values of the parameters; for instance, the solution corresponding to a Rayleigh number $\Ra$ was used as an initial guess in Newton's method for the problem with $\Ra + \Ra_{\mathrm{step}}$, where $\Ra_{\mathrm{step}}$ is some predetermined step. In some cases (most notably shear-thinning fluids) the use of advective stabilisation was essential; here we have added to the formulation an advective stabilisation term based on penalising the jumps between facets \cite{Burman2008,Douglas1976}:
\begin{equation}\label{eq:burman_stabilisation}
	S_h(\bv,\bw) := \sum_{K\in\mathcal{M}_h} \frac{1}{2}\int_{\partial K}\delta \, h^2_{\partial K}\,\jump{\nabla\bv} : \jump{\nabla\bw},
\end{equation}
\noindent
where $\jump{\bz}$ denotes the jump of $\bz$ across $\partial K$,  $h_{\partial K}$ is the diameter of each face in $\partial K$, and $\delta$ is an arbitrary stabilization parameter. In the numerical experiments the stabilization parameter was chosen to be cell-dependent and set to $5\times10^{-3}\|\tilde{u}\|_{L^\infty(K)}$; an analogous term was added to the temperature equation. The choice of stabilisation \eqref{eq:burman_stabilisation} was preferred over the more common SUPG stabilisation because the latter introduces additional couplings between the velocity and the pressure in the momentum equation, and between the velocity, stress and temperature in the energy equation, which can spoil the convergence of the nonlinear solver (this was already observed in the isothermal case in \cite{Farrell2020}). \txtb{One} disadvantage is that \eqref{eq:burman_stabilisation} introduces an additional kernel consisting of $C^1$ functions, that might not be captured by the relaxation; this means that unless $k\geq 3$ in 2D or $k\geq 5$ in 3D, a slight loss of robustness might be expected \cite{Farrell2020}. \txtb{Another disadvantage is that the stabilisation term \eqref{eq:burman_stabilisation} reduces the sparsity of the system, hence increasing the computational cost. In the temperature equation it is possible to employ SUPG stabilisation, but in the examples an analogous stabilisation term to \eqref{eq:burman_stabilisation} was used; we found, in fact, that interior penalty and SUPG stabilisation yielded almost identical results.}

Tables \ref{tb:Grashof2D_P1}--\ref{tb:Grashof2D_P3} show the average number of Krylov iterations for the problem with non-dimensional form \eqref{eq:PDE_OB_Grashof} and increasingly large Grashof number, comparing with different values of the Dissipation number; Tables \ref{tb:Grashof3D_P1}--\ref{tb:Grashof3D_P3} show the same for the three-dimensional problem. \txtb{In the two-dimensional problem the mesh was constructed to be finer towards the vertical walls, to allow for a better capturing of the thermal layer that arises (see Figure \ref{fig:meshes}). For Grashof numbers around $10^8$ the solver in 2D starts to break down; this can be seen e.g.\ in Tables \ref{tb:Grashof2D_P2} and \ref{tb:Grashof2D_P3}. In this example, the cause of this seems to be the failure of the coarse grid correction to resolve finer features of the flow; improving the resolution of the coarse grid employed delays the failure of the solver. An analogous statement holds for the three-dimensional problem.}

\begin{figure}
\centering
\subfloat[C][{\centering After one refinement.}]{{%
	\includegraphics[width=0.5\textwidth]{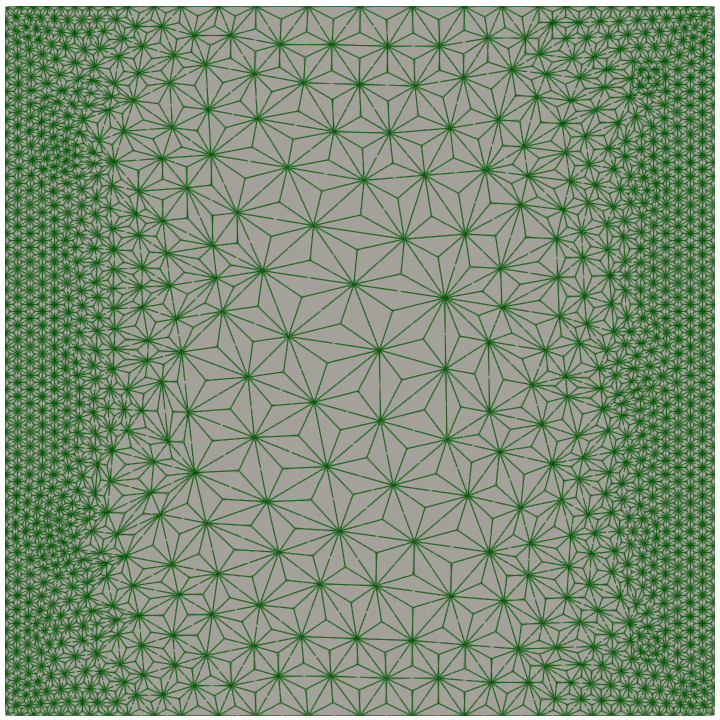}%
	}}%
	\subfloat[D][{\centering After two refinements.}]{{%
	\includegraphics[width=0.5\textwidth]{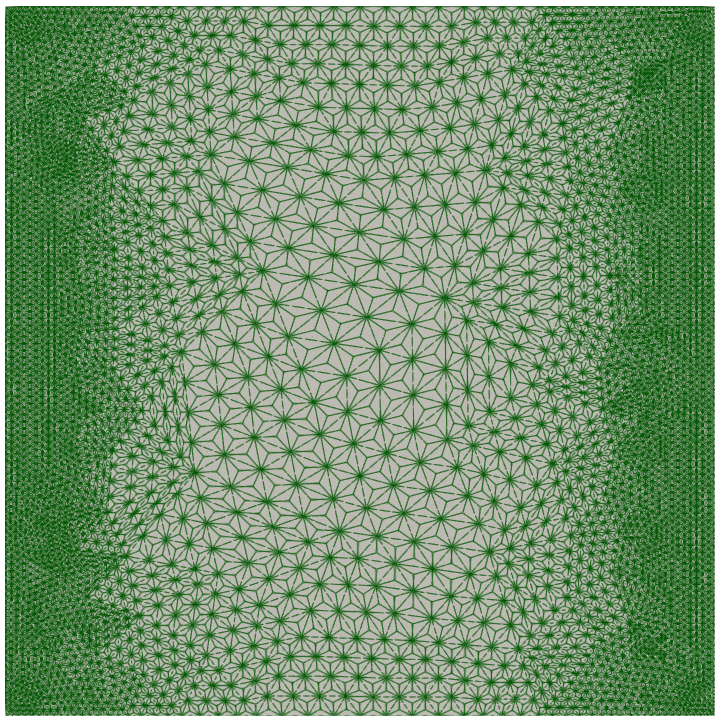}%
	}}%
	\caption{Computational meshes for the 2D heated cavity problem.}%
	\label{fig:meshes}
\end{figure}

It can be observed that the iteration count remains under control, and the previously mentioned loss of robustness occurs when $k=2$. Figure \ref{fig:Grashof2d} shows the streamlines and temperature contours for the problem (P3); it can be observed that the presence of the viscous dissipation term has a stabilising effect on the flow. Table \ref{tb:power_law} shows the number of iterations for the problem \eqref{eq:PDE_OB_Rayleigh1} using the temperature-dependent power-law relation
\begin{equation}\label{eq:example_power_law}
	\BS = \Scr(\BD(\bu),\theta) := \mathrm{e}^{-\frac{\theta}{10}} |\BD(\bu)|^{r-2}\BD(\bu),
\end{equation}
using $r=1.6$ and $\Di =2.0$, and the streamlines are shown in Figure \ref{fig:Rayleigh2d} alongside the ones of the Newtonian problem ($r=2$). In this case the tolerances for the linear and nonlinear iterations were set to $1\times 10^{-10}$. \txtb{In Table \ref{tb:power_law} it is apparent that the outer iteration counts are not as robust for this very challenging problem, although the performance is arguably still adequate. We note that the robustness in the iteration count can be improved by increasing the number of relaxation sweeps and multigrid cycles, but at this parameter range the less robust choice still results in an overall smaller computational time.}

\begin{table}
\centering
\captionsetup{justification=centering}
\begin{tabular}{c c c c  c c c c}
\toprule
 \multirow{2}{*}{$\Di$} & \multirow{2}{*}{$k$} &  \multirow{2}{*}{\# refs}& \multirow{2}{*}{\# dofs} & \multicolumn{4}{c}{$\Gr$ }\\ [0.1ex]
			& &  & & $5\times 10^4$ & $10^6$ & $10^7$ & ${10}^8$  \\
\midrule
 \multirow{4}{*}{$0$} & \multirow{2}{*}{2} & 1 & $5.5\times 10^4$  & 4.6 (5) & 4.5 (2) & 6 (2) & 20.3 (3) \\
											&  & 2 & $2.2\times 10^5$  & 4.4 (5) & 4 (2) & 6 (2) & 13.3 (3) \\
											& \multirow{2}{*}{3} & 1 & $1.2\times 10^5$ & 2.4 (5) & 2.5 (2) & 3 (2) & 6.33 (3) \\
											&  & 2 & $4.7\times 10^5$ & 2.4 (5) & 2 (2) & 2 (2) & 5 (3) \\
\midrule
 \multirow{4}{*}{$0.6$}& \multirow{2}{*}{2} & 1 & $5.5\times 10^4$ & 4.75 (4) & 4 (2) & 5.5 (2) & 11.5 (2) \\
											 &  & 2 & $2.2\times 10^5$ & 4.5 (4) & 4 (2)  & 4.5 (2) & 9 (2) \\
											 & \multirow{2}{*}{3} & 1 & $1.2\times 10^5$ & 2.75 (4) & 2 (2) & 2.5 (2) & 5 (2) \\
											 &  & 2 & $4.7\times 10^5$ & 2.5 (4) & 1.5 (2) & 2 (2) & 3.5 (2) \\
\midrule
 \multirow{4}{*}{$1.3$}& \multirow{2}{*}{2} & 1 & $5.5\times 10^4$ & 4.5 (4) & 3.5 (2) & 5 (2) & 7 (2) \\
											 &  & 2 & $2.2\times 10^5$ & 4.5 (4) & 3.5 (2) & 4 (2) & 5 (2) \\
											 & \multirow{2}{*}{3} & 1 & $1.2\times 10^5$ & 2.75 (4) & 2 (2) & 2 (2) & 3.5 (2) \\
											 &  & 2 & $4.7\times 10^5$ & 2.5 (4) & 1.5 (2) & 1.5 (2) & 2.5 (2) \\
\midrule
 \multirow{4}{*}{$2.0$}& \multirow{2}{*}{2} & 1 & $5.5\times 10^4$& 4.25 (4) & 3.5 (2) & 4.5 (2) & 5.5 (2) \\
											 &  & 2 & $2.2\times 10^5$ & 4.5 (4) & 3.5 (2) & 4 (2) & 4 (2) \\
											 & \multirow{2}{*}{3} & 1 & $1.2\times 10^5$ & 2.75 (4) & 2 (2) & 2 (2) & 3 (2) \\
											 &  & 2 & $4.7\times 10^5$ & 2.5 (4) & 1.5 (2) & 1.5 (2) & 2.5 (2) \\
  \bottomrule
\end{tabular}\\
\caption{\label{tb:Grashof2D_P1}
Average number of Krylov iterations per Newton step \txtb{and number of Newton iterations at the specified continuation step (in parentheses)}, as $\Gr$ increases for the 2D problem (P1) with $\Pr = 1$, obtained using \txtb{one} multigrid cycle with \txtb{6} relaxation sweeps.}
\end{table}

\begin{table}
\centering
\captionsetup{justification=centering}
\begin{tabular}{c c c c  c c c c}
\toprule
 \multirow{2}{*}{$\Di$} & \multirow{2}{*}{$k$} &  \multirow{2}{*}{\# refs}& \multirow{2}{*}{\# dofs} & \multicolumn{4}{c}{$\Gr$ }\\ [0.1ex]
			& &  & & $5\times 10^4$ & $10^6$ & $10^7$ & ${10}^8$  \\
\midrule
 \multirow{4}{*}{$0$} & \multirow{2}{*}{2} & 1 & $5.5\times 10^4$  & 4.5 (5) & 4.5 (2) & 6 (3) & 17.3 (3) \\
											&  & 2 & $2.2\times 10^5$  & 4.6 (5) & 4 (2) & 6 (3) &  - \\
											& \multirow{2}{*}{3} & 1 & $1.2\times 10^5$ & 2.4 (5) & 2.5 (2) & 2.67 (3) & 6.67 (3) \\
											&  & 2 & $4.7\times 10^5$ & 2.4 (5) & 2 (2) & 2 (3) & 4.67 (3) \\
\midrule
 \multirow{4}{*}{$0.6$}& \multirow{2}{*}{2} & 1 & $5.5\times 10^4$ & 5 (4) & 4 (2) & 5.5 (2) & 11.5 (2) \\
											 &  & 2 & $2.2\times 10^5$ & 4.5 (4) & 4 (2)  & 4.5 (2) & 13.5 (2) \\
											 & \multirow{2}{*}{3} & 1 & $1.2\times 10^5$ & 2.75 (4) & 2 (2) & 3 (2) & 5.5 (2) \\
											 &  & 2 & $4.7\times 10^5$ & 2.5 (4) & 2 (2) & 2 (2) & 4 (2) \\
\midrule
 \multirow{4}{*}{$1.3$}& \multirow{2}{*}{2} & 1 & $5.5\times 10^4$ & 4.5 (4) & 4 (2) & 5 (2) & 7 (2) \\
											 &  & 2 & $2.2\times 10^5$ & 4.5 (4) & 3.5 (2) & 4 (2) & 5.5 (2) \\
											 & \multirow{2}{*}{3} & 1 & $1.2\times 10^5$ & 2.75 (4) & 2 (2) & 2.5 (2) & 3.5 (2) \\
											 &  & 2 & $4.7\times 10^5$ & 2.5 (4) & 1.5 (2) & 2 (2) & 2.5 (2) \\
\midrule
 \multirow{4}{*}{$2.0$}& \multirow{2}{*}{2} & 1 & $5.5\times 10^4$& 4.5 (4) & 3.5 (2) & 4.5 (2) & 5 (2) \\
											 &  & 2 & $2.2\times 10^5$ & 4.5 (4) & 3.5 (2) & 4 (2) & 4 (2) \\
											 & \multirow{2}{*}{3} & 1 & $1.2\times 10^5$ & 2.75 (4) & 2 (2) & 2 (2) & 3 (2) \\
											 &  & 2 & $4.7\times 10^5$ & 2.5 (4) & 1.5 (2) & 1.5 (2) & 2.5 (2) \\
  \bottomrule
\end{tabular}\\
\caption{\label{tb:Grashof2D_P2}
Average number of Krylov iterations per Newton step \txtb{and number of Newton iterations at the specified continuation step (in parentheses)}, as $\Gr$ increases for the 2D problem (P2) with $\Pr = 1$, obtained using \txtb{one} multigrid cycle with \txtb{6} relaxation sweeps \txtb{(a dash indicates failure to converge in less than 200 linear iterations)}.}
\end{table}

\begin{table}
\centering
\captionsetup{justification=centering}
\begin{tabular}{c c c c  c c c c}
\toprule
 \multirow{2}{*}{$\Di$} & \multirow{2}{*}{$k$} &  \multirow{2}{*}{\# refs}& \multirow{2}{*}{\# dofs} & \multicolumn{4}{c}{$\Gr$ }\\ [0.1ex]
			& &  & & $5\times 10^4$ & $10^6$ & $10^7$ & ${10}^8$  \\
\midrule
 \multirow{4}{*}{$0$} & \multirow{2}{*}{2} & 1 & $5.5\times 10^4$  & 4.8 (5) & 4.5 (2) & 6 (3) & 14 (3) \\
											&  & 2 & $2.2\times 10^5$  & 5 (5) & 4.5 (2) & 5.33 (3) &  - \\
											& \multirow{2}{*}{3} & 1 & $1.2\times 10^5$ & 2.6 (5) & 2.5 (2) & 5 (3) & 8.33 (3) \\
											&  & 2 & $4.7\times 10^5$ & 2.4 (5) & 2 (2) & 3.67 (3) & 10 (2) \\
\midrule
 \multirow{4}{*}{$0.6$}& \multirow{2}{*}{2} & 1 & $5.5\times 10^4$ & 4.5 (4) & 4 (2) & 5.5 (2) & 11 (2) \\
											 &  & 2 & $2.2\times 10^5$ & 4.5 (4) & 4 (2)  & 4.5 (2) & 9 (2) \\
											 & \multirow{2}{*}{3} & 1 & $1.2\times 10^5$ & 2.75 (4) & 2 (2) & 2.5 (2) & 4.5 (2) \\
											 &  & 2 & $4.7\times 10^5$ & 2.5 (4) & 1.5 (2) & 2 (2) & 3.5 (2) \\
\midrule
 \multirow{4}{*}{$1.3$}& \multirow{2}{*}{2} & 1 & $5.5\times 10^4$ & 4.5 (4) & 4 (2) & 5 (2) & 6.5 (2) \\
											 &  & 2 & $2.2\times 10^5$ & 4.5 (4) & 3.5 (2) & 4 (2) & 5 (2) \\
											 & \multirow{2}{*}{3} & 1 & $1.2\times 10^5$ & 2.5 (4) & 2 (2) & 2 (2) & 3.5 (2) \\
											 &  & 2 & $4.7\times 10^5$ & 2.25 (4) & 1.5 (2) & 1.5 (2) & 2.5 (2) \\
\midrule
 \multirow{4}{*}{$2.0$}& \multirow{2}{*}{2} & 1 & $5.5\times 10^4$& 4.25 (4) & 3.5 (2) & 4 (2) & 5 (2) \\
											 &  & 2 & $2.2\times 10^5$ & 4.5 (4) & 3.5 (2) & 4 (2) & 4 (2) \\
											 & \multirow{2}{*}{3} & 1 & $1.2\times 10^5$ & 2.5 (4) & 2 (2) & 2 (2) & 3 (2) \\
											 &  & 2 & $4.7\times 10^5$ & 2 (4) & 1.5 (2) & 1.5 (2) & 2 (2) \\
  \bottomrule
\end{tabular}\\
\caption{\label{tb:Grashof2D_P3}
Average number of Krylov iterations per Newton step \txtb{and number of Newton iterations at the specified continuation step (in parentheses)}, as $\Gr$ increases for the 2D problem (P3) with $\Pr = 1$, obtained using \txtb{one} multigrid cycle with \txtb{6} relaxation sweeps \txtb{(a dash indicates failure to converge in less than 200 linear iterations)}.}
\end{table}

\begin{figure}
\centering
\subfloat[C][{\centering Streamlines for $\Di = 2$.}]{{%
	\includegraphics[width=0.5\textwidth]{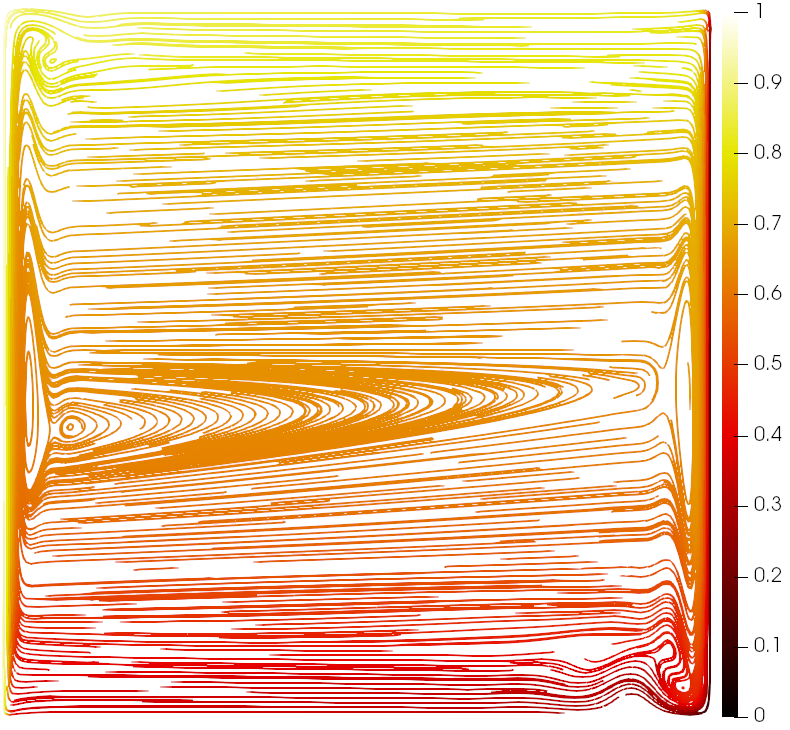}%
	}}%
	\subfloat[D][{\centering Streamlines for $\Di = 0$.}]{{%
	\includegraphics[width=0.5\textwidth]{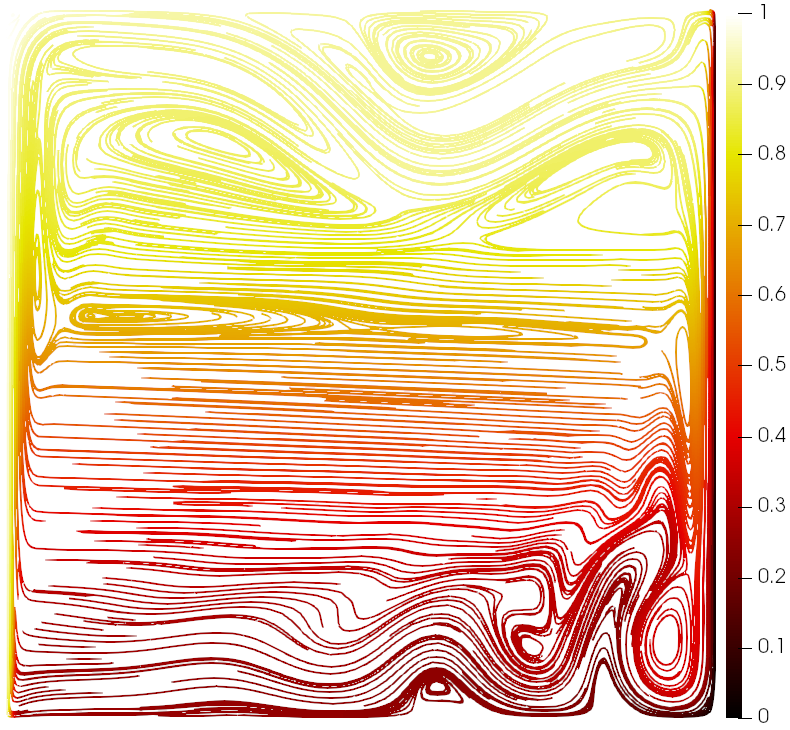}%
	}}\\
\subfloat[A][{\centering Temperature contours for $\Di = 2$.}]{{%
	\includegraphics[width=0.5\textwidth]{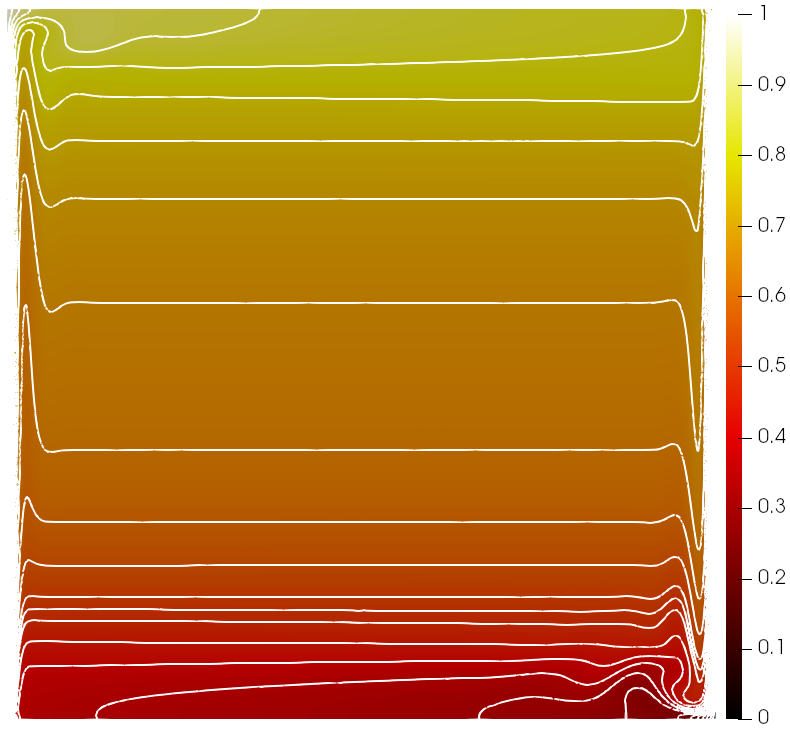}%
	}}%
	\subfloat[B][{\centering Temperature contours for $\Di = 0$.}]{{%
	\includegraphics[width=0.5\textwidth]{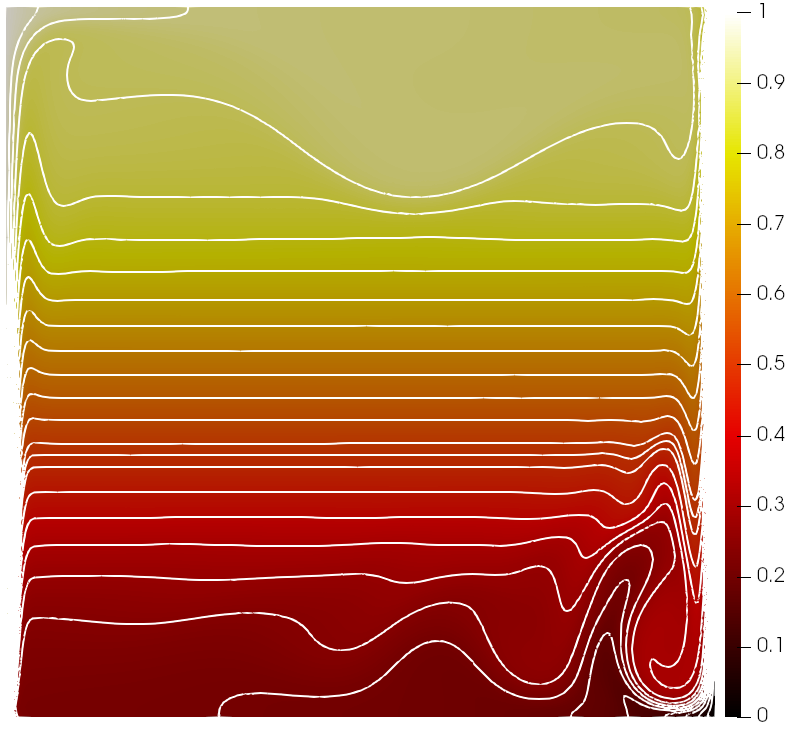}%
	}}\\
	\caption{Streamlines and temperature contours for the heated cavity with temperature dependent viscosity and heat conductivity with $\Gr= 10^8$.}%
	\label{fig:Grashof2d}
\end{figure}

\begin{table}
\centering
\captionsetup{justification=centering}
\begin{tabular}{c c c  c c c c}
\toprule
\multirow{2}{*}{$\Di$} &  \multirow{2}{*}{\# refs}& \multirow{2}{*}{\# dofs} & \multicolumn{4}{c}{$\Gr$ }\\ [0.1ex]
  &  & & $2.52\times 10^5$ & $6.30\times 10^5$ & $9.45\times 10^5$ & $1.26\times 10^6$  \\
\midrule
\multirow{2}{*}{0} & 1 & $3.2\times 10^5$ & 3.33 (3) & 4 (3) & 4.5 (2) & 9 (2) \\
									 & 2 & $2.6\times 10^6$ & 6 (3) & 4.5 (3) & 3.5 (2) & 3.5 (2) \\
 \midrule
\multirow{2}{*}{0.6} & 1 & $3.2\times 10^5$ & 3.33 (3) & 4 (3) & 4 (2) & 10.5 (2) \\
										 & 2 & $2.6\times 10^6$ & 4.33 (3) & 5 (3) & 4.5 (2) & 4.5 (2) \\
 \midrule
\multirow{2}{*}{1.3} & 1 & $3.2\times 10^5$ & 3.33 (3) & 4 (3) & 4 (2) & 10.5 (2) \\
										 & 2 & $2.6\times 10^6$ & 6 (3) & 4.5 (3) & 4.5 (2) & 4 (2) \\
 \midrule
\multirow{2}{*}{2} & 1 & $3.2\times 10^5$ & 3 (4) & 4 (3) & 4.5 (2) & 12 (2) \\
									 & 2 & $2.6\times 10^6$ & 6 (4) & 4.5 (3) & 3.5 (2) & 3.5 (2) \\
  \bottomrule
\end{tabular}\\
\caption{\label{tb:Grashof3D_P1}
Average number of Krylov iterations per Newton step as $\Gr$ increases for the 3D problem (P1) with $\Pr = 1$ and $k=3$, obtained using 2 multigrid cycles with 4 relaxation sweeps.}
\end{table}

\begin{table}
\centering
\captionsetup{justification=centering}
\begin{tabular}{c c c  c c c c}
\toprule
\multirow{2}{*}{$\Di$} &  \multirow{2}{*}{\# refs}& \multirow{2}{*}{\# dofs} & \multicolumn{4}{c}{$\Gr$ }\\ [0.1ex]
  &  & & $2.52\times 10^5$ & $6.30\times 10^5$ & $9.45\times 10^5$ & $1.26\times 10^6$  \\
\midrule
\multirow{2}{*}{0} & 1 & $3.2\times 10^5$ & 3.67 (4) & 4 (2) & 5 (2) & 13.5 (2) \\
									 & 2 & $2.6\times 10^6$ & 5 (4) & 5.5 (2) & 5.5 (2) & 5.5 (2) \\
 \midrule
\multirow{2}{*}{0.6} & 1 & $3.2\times 10^5$ & 3.33 (4) & 4 (3)  & 4.5 (2)  & 14 (2)  \\
  & 2 & $2.6\times 10^6$ & 4.33 (4)  & 5.5 (3)  & 4.5 (2)  & 5 (2)  \\
 \midrule
\multirow{2}{*}{1.3} & 1 & $3.2\times 10^5$ & 3.33 (4)  & 4 (3)  & 5 (2)  & 16.5 (2)  \\
  & 2 & $2.6\times 10^6$ & 6 (4)  & 4.5 (3)  & 4.5 (2)  & 4.5 (2)  \\
 \midrule
\multirow{2}{*}{2} & 1 & $3.2\times 10^5$ & 3.67 (4)  & 4 (3)  & 5 (2)  & 13.5 (2)  \\
  & 2 & $2.6\times 10^6$ & 6 (4)  & 4.5 (3)  & 3.5 (2)  & 4 (2)  \\
  \bottomrule
\end{tabular}\\
\caption{\label{tb:Grashof3D_P2}
Average number of Krylov iterations per Newton step as $\Gr$ increases for the 3D problem (P2) with $\Pr = 1$ and $k=3$, obtained using 2 multigrid cycles with 4 relaxation sweeps.}
\end{table}

\begin{table}
\centering
\captionsetup{justification=centering}
\begin{tabular}{c c c  c c c c}
\toprule
\multirow{2}{*}{$\Di$} &  \multirow{2}{*}{\# refs}& \multirow{2}{*}{\# dofs} & \multicolumn{4}{c}{$\Gr$ }\\ [0.1ex]
  &  & & $2.52\times 10^5$ & $6.30\times 10^5$ & $9.45\times 10^5$ & $1.26\times 10^6$  \\
\midrule
\multirow{2}{*}{0} & 1 & $3.2\times 10^5$ & 3.67 (4)  & 5 (3)  & 7.5 (2)  & 19 (2)  \\
  & 2 & $2.6\times 10^6$ & 5 (4)  & 6 (3)  & 6 (2)  & 9.5 (2)  \\
 \midrule
\multirow{2}{*}{0.6} & 1 & $3.2\times 10^5$ & 3.33 (4)  & 4 (3)  & 4 (2)  & 10.5 (2)  \\
  & 2 & $2.6\times 10^6$ & 4.33 (4)   & 5.5 (3)  & 4.5 (2)  & 7 (2)  \\
 \midrule
\multirow{2}{*}{1.3} & 1 & $3.2\times 10^5$ & 3.33 (5)  & 4 (3)  & 10 (2)  & 28.5 (2)  \\
  & 2 & $2.6\times 10^6$ & 6 (5)  & 4.5 (3)  & 4.5 (2)  & 4 (2)  \\
 \midrule
\multirow{2}{*}{2} & 1 & $3.2\times 10^5$ & 3 (5)  & 4 (3)  & 11.5 (2)  & 41.5 (2)  \\
  & 2 & $2.6\times 10^6$ & 6 (5)  & 4.5 (3)  & 3.5 (2)  & 3.5 (2)  \\
  \bottomrule
\end{tabular}\\
\caption{\label{tb:Grashof3D_P3}
Average number of Krylov iterations per Newton step as $\Gr$ increases for the 3D problem (P3) with $\Pr = 1$ and $k=3$, obtained using 2 multigrid cycles with 4 relaxation sweeps.}
\end{table}

\begin{figure}
\centering
\subfloat[C][{\centering Problem (P1).}]{{%
	\includegraphics[width=0.5\textwidth]{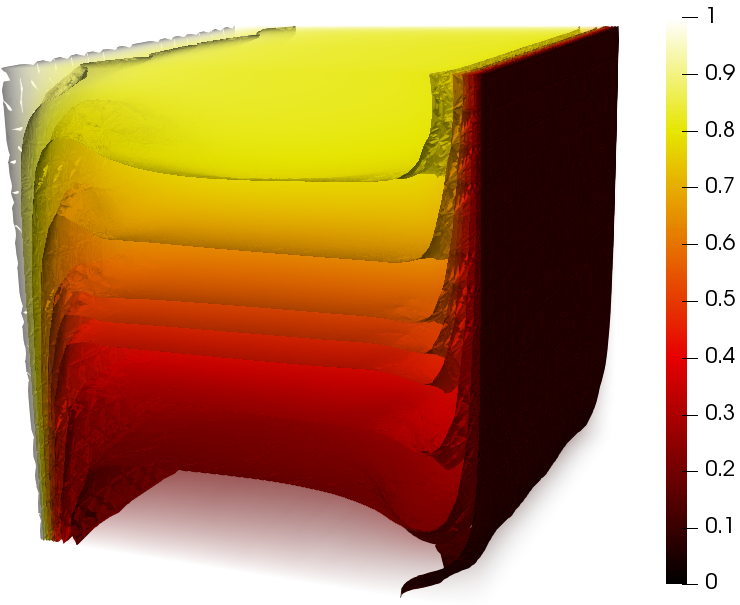}%
	}}%
	\subfloat[D][{\centering Problem (P3).}]{{%
	\includegraphics[width=0.5\textwidth]{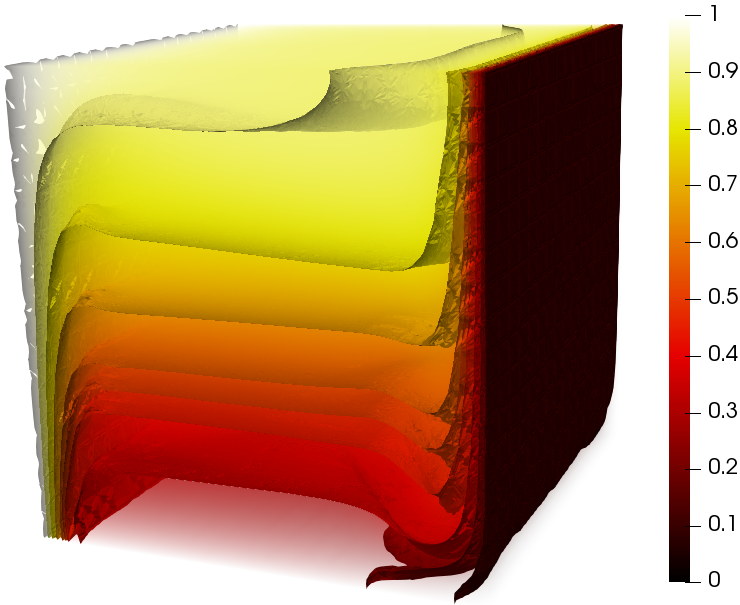}%
	}}%
	\caption{Temperature contours for the 3D heated cavity with $\Gr = 1.26\times 10^{6}$.}%
	\label{fig:Rayleigh3d}
\end{figure}

\begin{table}
\centering
\captionsetup{justification=centering}
\begin{tabular}{c c c c  c c c c}
\toprule
 \multirow{2}{*}{$\Di$} & \multirow{2}{*}{$k$} &  \multirow{2}{*}{\# refs}& \multirow{2}{*}{\# dofs} & \multicolumn{4}{c}{$\Ra$ }\\ [0.1ex]
			& &  & & $1$ & $1000$ & $20000$ & $30000$  \\
\midrule
 \multirow{4}{*}{$0$} & \multirow{2}{*}{2} & 1 & $5.5\times 10^4$  & 4.5 (4) & 7.2 (10) & 23.25 (4) & 28 (4) \\
											&  & 2 & $2.2\times 10^5$  & 5 (4) & 6.75 (4) & 6.67 (3) & 38 (4) \\
											& \multirow{2}{*}{3} & 1 & $1.2\times 10^5$ & 2.75 (4) & 4.44 (9) & 12.75 (4) & 15.25 (4) \\
											&  & 2 & $4.7\times 10^5$ & 2.75 (4) & 4.7 (10) & 12 (4) & 14.5 (4) \\
\midrule
 \multirow{4}{*}{$2.0$}& \multirow{2}{*}{2} & 1 & $5.5\times 10^4$ & 3 (4) & 7.22 (9) & 18 (4) & 21.25 (4) \\
											 &  & 2 & $2.2\times 10^5$ & 2 (4) & 7.55 (11)  & 20.25 (4) & 25 (4) \\
											 & \multirow{2}{*}{3} & 1 & $1.2\times 10^5$ & 1 (4) & 4.1 (10) & 10.5 (4) & 12.25 (4) \\
											 &  & 2 & $4.7\times 10^5$ & 1 (4) & 4.33 (12) & 10 (4) & 14.25 (4) \\
  \bottomrule
\end{tabular}\\
\caption{\label{tb:power_law}
Average number of Krylov iterations per Newton step as $\Ra$ increases for the constitutive relation \eqref{eq:example_power_law}  with $r=1.6$, obtained using \txtb{one} multigrid cycle with \txtb{5} relaxation sweeps.}
\end{table}

\begin{figure}
\centering
\subfloat[C][{\centering Streamlines for $r = 2$.}]{{%
	\includegraphics[width=0.5\textwidth]{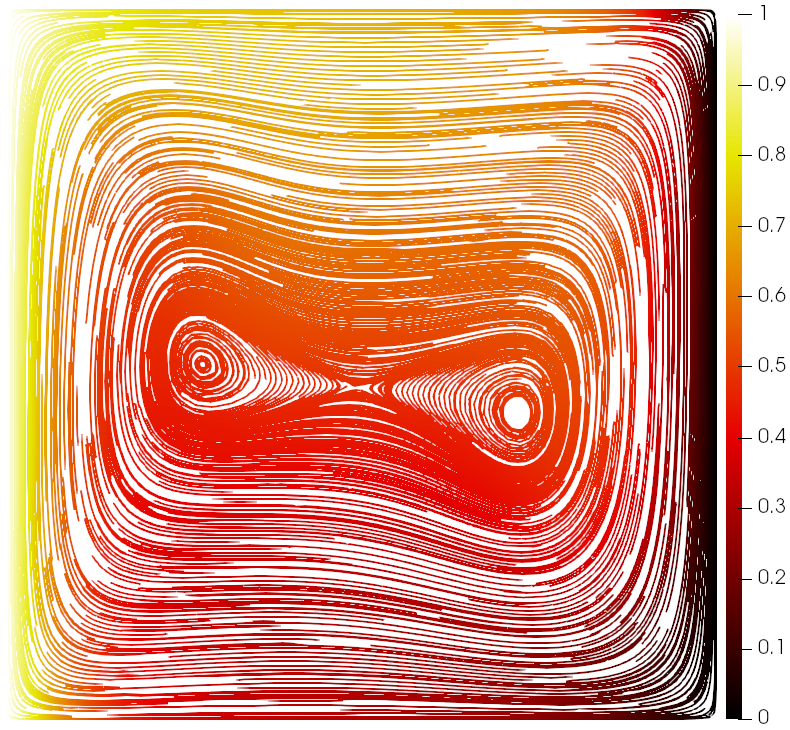}%
	}}%
	\subfloat[D][{\centering Streamlines for $r = 1.6$.}]{{%
	\includegraphics[width=0.5\textwidth]{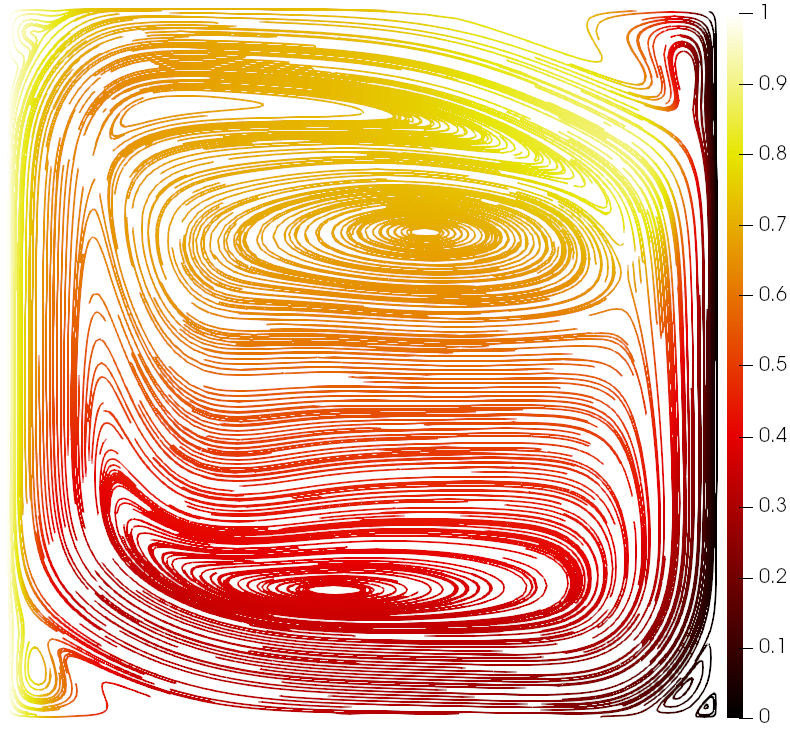}%
	}}\\
\subfloat[A][{\centering Temperature contours for $r = 2$.}]{{%
	\includegraphics[width=0.5\textwidth]{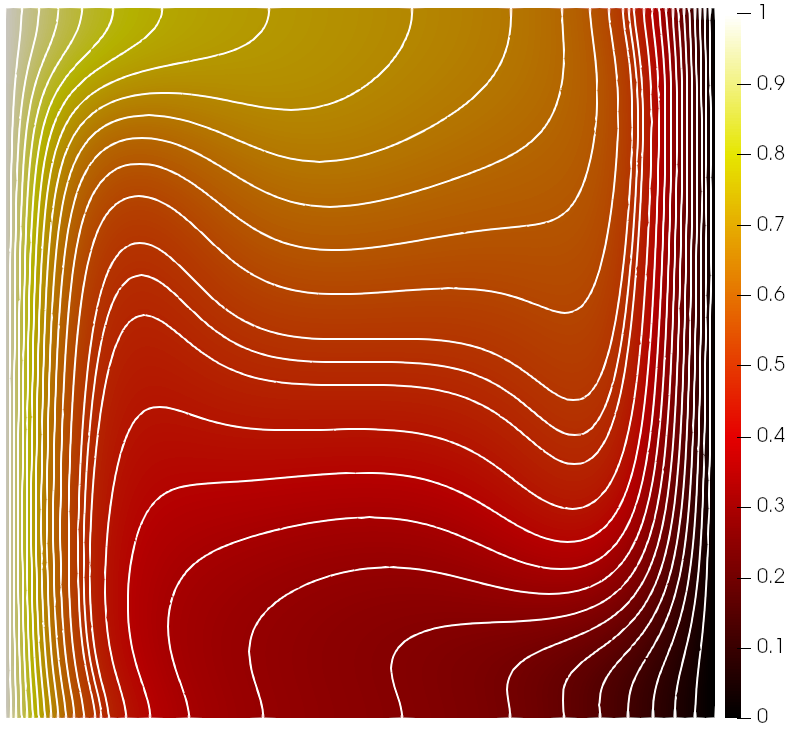}%
	}}%
	\subfloat[B][{\centering Temperature contours for $r = 1.6$.}]{{%
	\includegraphics[width=0.5\textwidth]{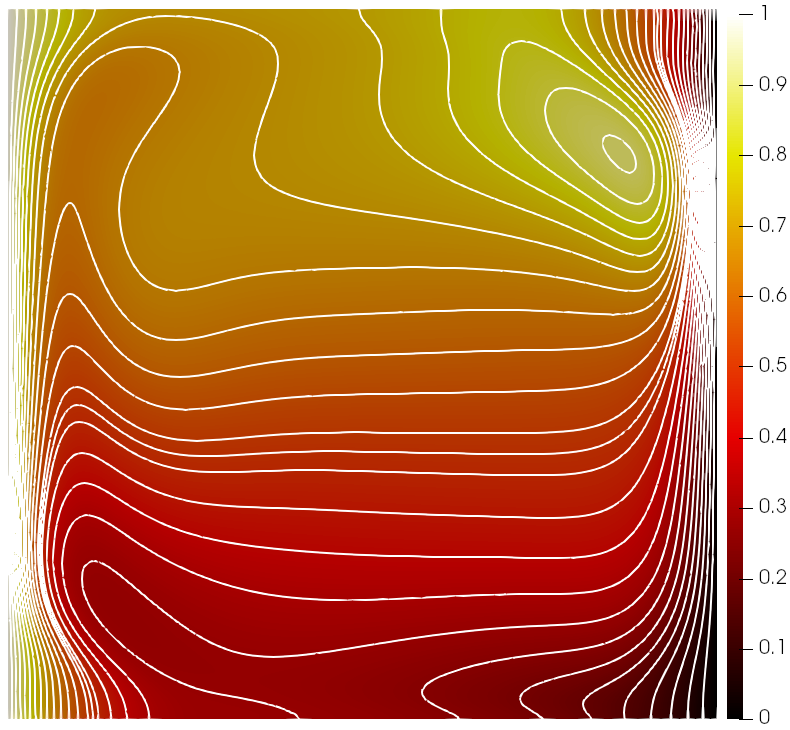}%
	}}\\
	\caption{Streamlines and temperature contours for the heated cavity with the power-law constitutive relation \eqref{eq:example_power_law}, $\Ra = 3\times 10^4$ and $\Di = 2.0$.}%
	\label{fig:Rayleigh2d}
\end{figure}

{\color{black}
	\subsection{Bingham--activated Euler channel}\label{sec:Euler_channel}

	The constitutive relation \eqref{eq:implicit_CR} is very general. As previously mentioned, it encompasses both the Bingham constitutive relation for viscoplastic fluids ($\hat{\sigma}\equiv 0$), and its activated Euler counterpart ($\hat{\tau}\equiv 0$) that was proposed only very recently \cite{Blechta2019}. Since the rheological parameters are allowed to depend on the temperature, it is interesting to consider a scenario for which the constitutive relation transitions between these two constrasting types of response as a function of temperature. We consider such an example in this section, to stress-test the generality of \eqref{eq:implicit_CR}, and its ability to capture this transition. We believe this to be the first time that the transition between Bingham and activated Euler relations has been simulated.

	The implicit constitutive relation \eqref{eq:implicit_CR} is not described by a Fr\'echet differentiable function and therefore Newton's method cannot be directly applied; for this reason we will employ in this example the following regularised version of the constitutive relation:
\begin{equation}\label{eq:EulerBingham_reg}
	\BG(\BS,\BD(\bu),\theta) := 2\hat{\mu}(\theta)\frac{\max_\varepsilon \{0,|\BD(\bu)|-\hat{\sigma}(\theta)\}}{|\BD(\bu)|}\BD(\bu)
- \frac{\max_\varepsilon \{0, |\BS| - \hat{\tau}(\theta)\}}{|\BS|}\BS,
\end{equation}
where the regularised maximum function is defined for $\varepsilon >0$ as:
\begin{equation}\label{eq:regularised_max}
	\mathrm{max}_\varepsilon \{x,y\} := \frac{1}{2}\left(x + y + \sqrt{(x-y)^2 + \varepsilon^2}\right)\qquad x,y\in \mathbb{R}.
\end{equation}
The rheological parameters will be taken to be of the form
\begin{gather*}
	\hat{\mu}(\theta) = \mu_*, \qquad \mu_*>0,\\
	\hat{\sigma}(\theta) = \mathrm{max}_\varepsilon\left\{0, \mathrm{min}_\varepsilon\left\{\sigma_*,  \frac{\sigma_*}{\theta_2 - \theta_1}(\theta- \theta_2) + \sigma_*  \right\}\right\} ,\qquad \sigma_*,\theta_2,\theta_1 >0,\, \theta_1\neq \theta_2,\\
	\hat{\tau}(\theta) = \mathrm{max}_\varepsilon\left\{0, \mathrm{min}_\varepsilon\left\{\tau_*,  \frac{\tau_*}{\theta_4 - \theta_3}(\theta- \theta_4) + \tau_*  \right\}\right\},  \qquad \tau_*,\theta_3,\theta_4>0,\, \theta_3\neq \theta_4,
\end{gather*}
where now the regularised minimum function is defined for $\varepsilon>0$ as:
\begin{equation}\label{eq:regularised_min}
	\mathrm{min}_\varepsilon \{x,y\} := \frac{1}{2}\left(x + y - \sqrt{(x-y)^2 + \varepsilon^2}\right)\qquad x,y\in \mathbb{R}.
\end{equation}
In particular, this means e.g.\ that, if $\theta_1>\theta_2$, then the activation parameter $\hat{\sigma}$ will vanish for $\theta>\theta_1$, and will equal $\sigma_*$ for $\theta<\theta_2$.

In this example we consider the system \eqref{eq:PDE_Forced} with $\rho_* = 1 = c_v$. The problem is posed on the straight channel $\Omega := (0,30)\times (-1,1)$ with the following boundary datum for the temperature:
\begin{equation*}
\theta = \left\{
	\begin{array}{cc}
		\theta_H, & \textrm{on }\Gamma_H,\\
		0, & \textrm{on }\Gamma_C,\\
		-\frac{\theta_H}{10}x_1 + 2\Theta_H, & \text{on }\partial\Omega \setminus(\Gamma_H\cup \Gamma_C),
	\end{array}
	\right.
\end{equation*}
where $\theta_H>0$, and $\Gamma_H:=\{(x_1,x_2)^\top \in \partial\Omega \, :\, x_1\leq 10\}$ and $\Gamma_C:=\{(x_1,x_2)^\top\in \partial\Omega \, :\, x_1\geq 20\}$.  As for the velocity, we impose the following boundary conditions:
\begin{equation}\label{eq:bcs_channel}
\begin{gathered}
	(\BS - p\bm{I})(1,0)^\top \cdot (1,0)^\top= 0,\, \bu\cdot (0,1)^\top =0
	\text{  on }\Gamma_{\textrm{out}},\quad
	\bu = \bu_{B} \text{  on }\Gamma_{\textrm{in}},\\
	\bu = \bm{0} \text{  on }\partial\Omega \setminus (\Gamma_{\textrm{in}}\cup\Gamma_{\textrm{out}}),
\end{gathered}
\end{equation}
where $\Gamma_{\textrm{in}}:= \{x_1=0\}$, $\Gamma_{\textrm{out}}:= \{x_1=30\}$, and $\bu_B$ is the fully developed Poiseuille flow for the isothermal problem with the Bingham constitutive relation corresponding to the yield-stress $\tau_*$, for which the exact solution is available (see e.g.\ \cite{Grinevich2009}).

The constitutive relation \eqref{eq:implicit_CR} describes such contrasting and low-regularity behaviour, especially when both the Bingham and Euler regimes are present, that higher order discretisations (such as those employing the Scott--Vogelius element) are not appropriate. In preliminary experiments with the discretisation proposed in Section \ref{sec:discretisation}, we observed oscillations near the transition regions that spoiled the convergence of the nonlinear solver. For this section we therefore switch to a discretisation based on a piecewise-constant approximation of the stress (and symmetric velocity gradient). The 4-field formulation \eqref{eq:FEFormulation2} was discretised with a $\mathbb{P}_1^d$--$\mathbb{P}_1$ element for the velocity and pressure, $\mathbb{P}_1$ elements for the temperature, and piecewise constant elements $\mathbb{P}^{d\times d}_0$ for the stress. In order to guarantee the stability of the formulation, the following term is added to the right-hand-side of the divergence constraint \eqref{eq:discrete_mass_B0} (c.f.\ \cite{Hughes1986}):
\begin{equation*}
 2 h_n^2 \int_\Omega (\diver (\bu^n\otimes \bu^n) + \nabla p^n)\cdot \nabla q
\end{equation*}
We solve the resulting linear systems with the sparse direct solver from MUMPS, without an augmented Lagrangian term ($\gamma = 0$).

Figure \ref{fig:euler_bingham} shows the solution obtained with 202372 degrees of freedom, and the values $\mu_* = \tfrac{1}{2}$, $\sigma_*=0.1$, $\tau_*=0.025$, $\varepsilon = 10^{-4}$, $\theta_1 = 3$, $\theta_2 = 1$, $\theta_3=7$, $\theta_4=9$, and $\theta_H =10$. The figure includes a plot of the effective viscosity, which for this problem we define as:
\begin{equation}
	\mu_{\mathrm{eff}}(\BS,\bu,\theta) := 2\hat{\mu}(\theta)  \frac{\max_\varepsilon \{0,|\BD(\bu)|-\hat{\sigma}(\theta)\}}{|\BD(\bu)|} \left(10^{-12} +  \frac{\max_\varepsilon \{0,|\BS|-\hat{\tau}(\theta)\}}{|\BS|} \right)^{-1}.
\end{equation}

The solution in Figure \ref{fig:euler_bingham} satisfies the Bingham constitutive relation in the hotter regions, and eventually transitions to an activated Euler model when the temperature decreases. Although it is unclear whether a model with such a transition between the viscoplastic and inviscid regimes could be of practical use, the example showcases the ability of implicit constitutive relations to capture enormously contrasting behaviour; note in particular that the effective viscosity transitions over 14 orders of magnitude.

\begin{figure}
\centering
	\subfloat[D][{\centering Temperature}]{{%
	\includegraphics[width=\textwidth]{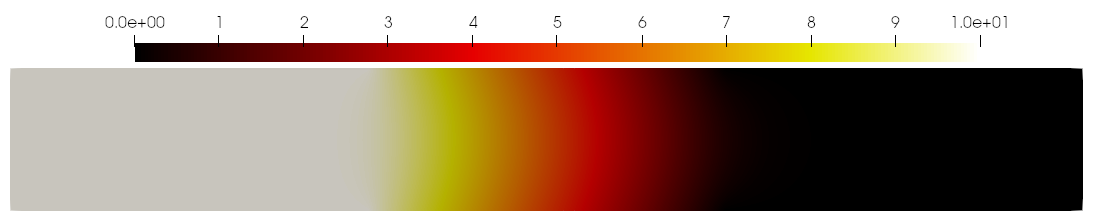}%
	}}\\
	\subfloat[E][{\centering Velocity Magnitude}]{{%
	\includegraphics[width=\textwidth]{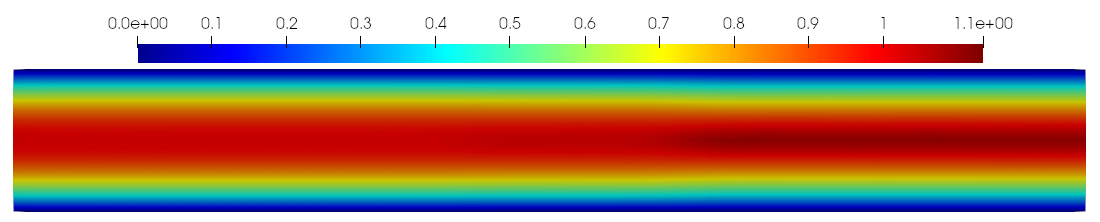}%
	}}\\
\subfloat[A][{\centering Effective Viscosity}]{{%
	\includegraphics[width=\textwidth]{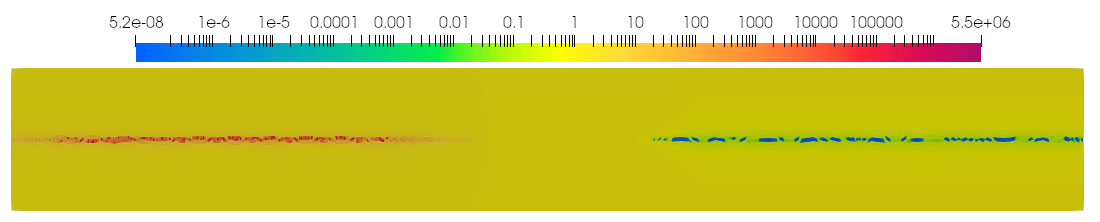}%
	}}%
	\caption{Solution for the Bingham--Euler channel problem.}%
	\label{fig:euler_bingham}
\end{figure}
}

\subsection{Bingham \txtb{fluid} in a cooling channel}
\txtb{In this example we study whether the Scott--Vogelius discretisation and multigrid preconditioner we have proposed can effectively simulate Bingham fluids with less extreme behaviour. We therefore return to the discretisation and solver described in Sections \ref{sec:discretisation} and \ref{sec:AL_preconditioner}.}
Let $\Omega := (0,40)\times (-1,1)$, and consider the following boundary conditions for the temperature:
\begin{equation*}
	\nabla\theta \cdot\bm{n} = 0 \quad\text{on }\partial\Omega\setminus(\Gamma_H\cup\Gamma_C),\quad
	\theta = \left\{\begin{array}{cc}
			\theta_H, & \text{on }\Gamma_H,\\
			0, & \text{on }\Gamma_C,
	\end{array}\right.
\end{equation*}
where $\theta_H>0$, and $\Gamma_H := \{(x_1,x_2)^\top \in \partial\Omega : x_1 \leq 10\}$ and $\Gamma_C := \{(x_1,x_2)^\top\in \partial\Omega : x_2 \in \{-1,1\},\, 10 < x_1\}$.  The Bingham constitutive relation for viscoplastic fluids is obtained by setting $\hat{\sigma} \equiv 0$ in \eqref{eq:implicit_CR}. \txtb{As mentioned in Section \ref{sec:Euler_channel}, this relation is not described by a Fr\'{e}chet differentiable function and some regularisation is needed.} The non-dimensional form then reads: 
\begin{subequations}\label{eq:PDE_Bingham}
	\begin{alignat}{2}
			- \diver \BS + \Re \diver (\bu&\otimes\bu)
			+ \nabla p = 0
			\quad & &\text{ in }\Omega,\\
			\diver\bu &= 0\quad & &\text{ in }\Omega,\\
			- \frac{1}{\Pe} \diver (\nabla\theta)
			+ \diver(\bu & \theta )
			= \frac{\Br}{\Pe}\BS\twodot \BD(\bu)\quad & &\text{ in }\Omega,\\
			\sqrt{\varepsilon^2 + |\BD(\bu)|^2}\BS = (\Bn\, \tau(\theta)& + 2\mu(\theta)|\BD(\bu)|)\BD(\bu) & & \text{ in }\Omega,
	\end{alignat}
\end{subequations}
where $\varepsilon>0$ is the regularisation parameter, and the Reynolds, P\'{e}clet, Bingham and Brinkman numbers are defined as
\begin{equation}
	\Re = \frac{\rho_0 U R}{\nu_0},\quad
	\Pe = \frac{\rho_0 c_p U R}{\kappa_0},\quad
	\Bn = \frac{\tau_0 R}{\nu_0 U},\quad
	\Br = \frac{\nu_0 U^2}{\kappa_0 \theta_H},\quad
\end{equation}
where $R$ is the radius of the pipe, $U$ is the average velocity at the inlet and $\tau_0$ is the value of the yield stress at the inlet. Two choices for the (non-dimensional) viscosity and yield stress are considered here:
\begin{alignat*}{2}
	\text{Problem (Q1): } \mu(\theta) &:= a_1 \theta + a_2,\quad  && \tau(\theta) := 1. \\
	\text{Problem (Q2): } \mu(\theta) &:= 1,  && \tau(\theta) := b_1\theta + b_2.
\end{alignat*}
The values of $a_1$ and $a_2$ are chosen so that the viscosity is unity at the inlet and increases by a factor of 20 at the outlet (which means that the effective Bingham number decreases by the same factor). The constants $b_1$ and $b_2$ are such that the Bingham number is $1.5$ at the inlet, and $9$ at the outlet when a temperature drop of $15$ is applied. \txtb{For the velocity we impose the boundary condition \eqref{eq:bcs_channel}, with a Bingham number of $\Bn=1.5$ at the inlet.} 
In order to obtain better initial guesses for Newton's method, secant continuation was employed: given two previously computed solutions $\bz_1,\bz_2$ corresponding to the parameters $\varepsilon_1,\varepsilon_2$, respectively, the initial guess for Newton's method at $\varepsilon$ is chosen as
\begin{equation*}
	\frac{\varepsilon-\varepsilon_2}{\varepsilon_2-\varepsilon_1}(\bz_2-\bz_1) + \bz_2.
\end{equation*}

For this (arguably more complex) problem, the multigrid algorithm for the top block ceases to be effective for values smaller than $\varepsilon=0.001$; while the relaxation performs reasonably well, the prolongation operator is not robust with respect to $\varepsilon$. Nevertheless, the augmented Lagrangian strategy is useful, as it allows us to apply a sparse direct solver to the top block instead of the entire matrix; Tables \ref{tb:channel_visc} and \ref{tb:channel_ystress} show the average number of Krylov iterations per Newton step obtained when using a sparse direct solver for the top block, with the same Schur complement approximation. Extending the multigrid algorithm to this more challenging problem would likely require substantial theoretical developments, i.e.~the extension of Sch\"{o}berl's framework to non-symmetric and non-coercive problems.

\begin{table}
\centering
\captionsetup{justification=centering}
\begin{tabular}{c c c  c c c c}
\toprule
\multirow{2}{*}{$\gamma$} &  \multirow{2}{*}{\# refs}& \multirow{2}{*}{\# dofs} & \multicolumn{4}{c}{$\varepsilon$ }\\ [0.1ex]
			  & & & $1\times 10^{-3}$ & $1\times 10^{-4}$  & $2\times 10^{-5}$ & $1\times 10^{-5}$  \\
\midrule
\multirow{4}{*}{$10^3$} & 1 & $2.7\times 10^4$ & 12.8 (5) & 22 (3) & 51 (1) & 48 (1) \\
												& 2 & $1.0\times 10^5$ & 14.8 (6) & 33.5 (4) & 55 (2) & 49 (1) \\
												& 3 & $4.3\times 10^5$ & 13.5 (6) & 17 (4) & 25 (2) & 17 (1) \\
												& 4 & $1.7\times 10^6$ & 11.71 (6) & 8.8 (4) & 13 (2) & 12 (1) \\
 \midrule
\multirow{4}{*}{$10^5$} & 1 & $2.7\times 10^4$ & 2.6 (6) & 2 (3) & 2 (1) & 1.33 (1) \\
												& 2 & $1.0\times 10^5$ & 2.6 (6) & 2.25 (4) & 1.4 (2) & 1.15 (1) \\
												& 3 & $4.3\times 10^5$ & 2 (6) & 1.33 (4) & 1.14 (2) & 1 (1) \\
												& 4 & $1.7\times 10^6$ & 1.75 (6) & 1.33 (4) & 1.15 (2) & 1.07 (1) \\
  \bottomrule
\end{tabular}\\
\caption{\label{tb:channel_visc}
Average number of Krylov iterations per Newton step as $\varepsilon$ decreases for Problem (Q1) with $k=2$, $\Pe=10$, $\theta_H=10$, $\Br=0.1$.}
\end{table}

\begin{table}
\centering
\captionsetup{justification=centering}
\begin{tabular}{c c c  c c c c}
\toprule
\multirow{2}{*}{$\gamma$} &  \multirow{2}{*}{\# refs}& \multirow{2}{*}{\# dofs} & \multicolumn{4}{c}{$\varepsilon$ }\\ [0.1ex]
			  & & & $1\times 10^{-3}$ & $1\times 10^{-4}$  & $5\times 10^{-5}$ & $2\times 10^{-5}$  \\
\midrule
\multirow{4}{*}{$10^3$} & 1 & $2.7\times 10^4$ & 12 (7) & 17.5 (6) & 26.6 (2) & * \\
												& 2 & $1.0\times 10^5$ & 11.3 (8) & 16.25 (6) & 17 (2) & 23 (1) \\
												& 3 & $4.3\times 10^5$ & 12.88 (8) & 13.67 (6) & 14.5 (2) & 13 (1) \\
												& 4 & $1.7\times 10^6$ & 6.48 (8) & * & * & * \\
 \midrule
\multirow{4}{*}{$10^5$} & 1 & $2.7\times 10^4$ & 1.78 (7) & 2.16 (7) & 1.75 (2) & 1.2 (1) \\
												& 2 & $1.0\times 10^5$ & 1.6 (7) & 1.3 (7) & 1.16 (1) & 1.07 (1) \\
												& 3 & $4.3\times 10^5$ & 1.78 (7) & 1.17 (7) & 1.05 (1) & 1 (1) \\
												& 4 & $1.7\times 10^6$ & 1.31 (7) & 1.13 (7) & 1.03 (1) & 1 (1) \\
  \bottomrule
\end{tabular}\\
\caption{\label{tb:channel_ystress}
Average number of Krylov iterations per Newton step as $\varepsilon$ decreases for Problem (Q2) with $k=2$, $\Pe=10$, $\theta_H=10$, $\Br=0$. The symbol * means that the maximum permitted number of nonlinear iterations was reached.}
\end{table}

Figures \ref{fig:channel_visc} and \ref{fig:channel_ystress} show the temperature field and the yielded/unyielded regions of the fluid. The results are qualitatively similar to those found in \cite{Vinay2005}, where an algorithm based on the augmented Lagrangian method was applied to a similar problem (neglecting the convective term and viscous dissipation). While it is known that a method based on regularisation, such as the one applied here, is not the most appropriate if one wishes to locate the exact position of the yield surfaces, it can still be useful to obtain the general features of the flow. For example, the solutions found here show no unyielded regions in the transition zone where the temperature field varies with the mean flow direction, which is the expected behaviour \cite{Vinay2005}.

\begin{figure}
\centering
	\subfloat[D][{\centering Temperature}]{{%
	\includegraphics[width=\textwidth]{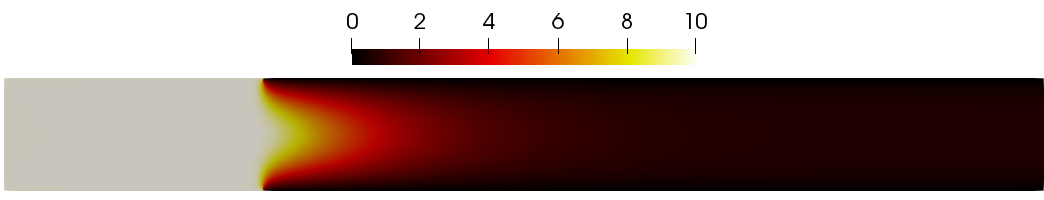}%
	}}\\
\subfloat[A][{\centering Magnitude of the symmetric velocity gradient}]{{%
	\includegraphics[width=\textwidth]{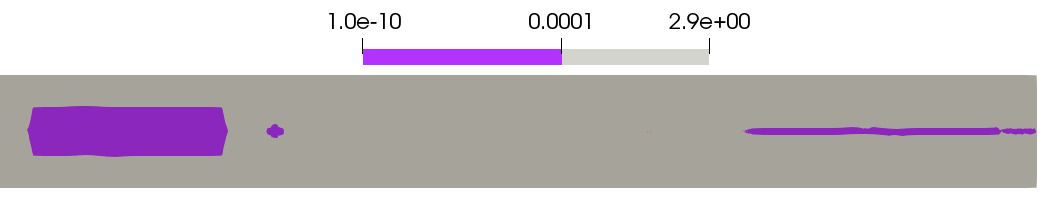}%
	}}%
	\caption{Temperature field and yielded regions for \txtb{a} Bingham \txtb{fluid} on a cooling channel (Problem (Q1)), with $\Pe=10$, $\theta_H=10$, $\Br=0.1$.}%
	\label{fig:channel_visc}
\end{figure}

\begin{figure}
\centering
	\subfloat[D][{\centering Temperature}]{{%
	\includegraphics[width=\textwidth]{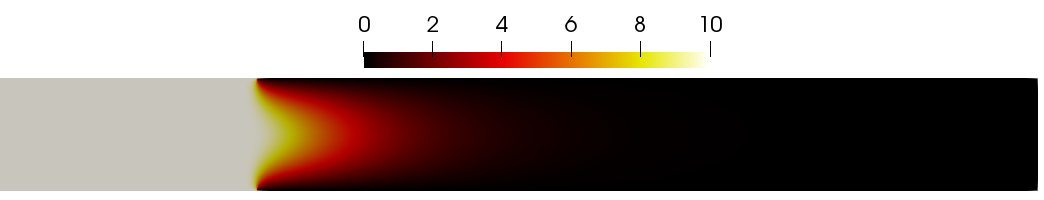}%
	}}\\
\subfloat[A][{\centering Magnitude of the symmetric velocity gradient.}]{{%
	\includegraphics[width=\textwidth]{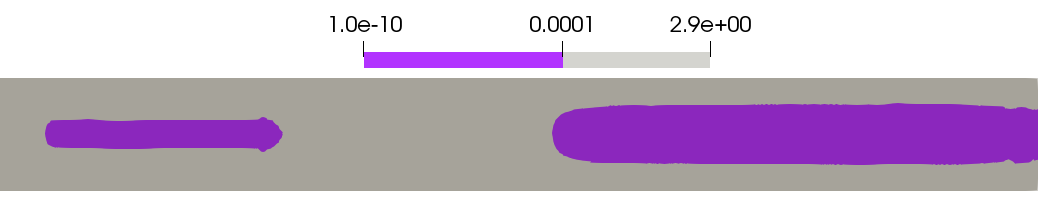}%
	}}%
	\caption{Temperature field and yielded regions for \txtb{a} Bingham \txtb{fluid} on a cooling channel (Problem (Q2)), with $\Pe = 10$, $\theta_H=10$, $\Br=0$.}%
	\label{fig:channel_ystress}
\end{figure}

\bibliographystyle{amsplain}

\begin{thebibliography}{10}

\bibitem{MUMPS:1}
P.~R. Amestoy, I.~S. Duff, J.~Coster and J.~-Y. L'Excellent.
\newblock A fully asynchronous multifrontal solver using distributed
  dynamic scheduling.
\newblock {\em SIAM J. Matrix Anal.}, 23(1):15--41, 2001.

\bibitem{Bacuta2006}
C.~Bacuta.
\newblock A unified approach for {U}zawa algorithms.
\newblock {\em SIAM J. Numer. Anal.}, 44(6):2633--2649, 2006.

\bibitem{PETSc}
S.~Balay, S.~Abhyankar, M.~F. Adams, J.~Brown, P.~Brune, K.~Buschelman,
  L.~Dalcin, V.~Eijhout, W.~D. Gropp, D.~Kaushik, M.~G. Knepley, L.~C. McInnes,
  K.~Rupp, S.~Smith, B.~F.~Zampini, H.~Zhang, and H.~Zhang.
\newblock {PETSc} users manual.
\newblock {\em Tech. Report ANL--95/11--Revision 3.8, Argonne National
  Laboratory}, 2017.
\newblock http://www.mcs.anl.gov/petsc.

\bibitem{Bayly1992}
B.~J. Bayly, C.~D. Levermore, and T.~Passot.
\newblock Density variations in weakly compressible flows.
\newblock {\em Phys. Fluids A}, 4(5):945--954, 1992.

\bibitem{Belenki:2012}
L.~Belenki, L.~Berselli, L.~Diening, and M.~R\r{u}{\v{z}}i{\v{c}}ka.
\newblock On the finite element approximation of $p$-{S}tokes systems.
\newblock {\em SIAM J. Numer. Anal.}, 50:373--397, 2012.

\bibitem{Benzi2006}
M.~Benzi and M.~A. Olshanskii.
\newblock An augmented {L}agrangian-‐based approach to the {O}seen problem.
\newblock {\em SIAM J. Sci. Comput.}, 28(6):2005--2113, 2006.

\bibitem{Blechta2019}
J.~Blechta, J.~M\'{a}lek, and K.~R. Rajagopal.
\newblock {O}n the classification of incompressible fluids and a mathematical
  analysis of the equations that govern their motion.
\newblock {\em SIAM J. Math. Anal.}, 52(2):1232--1289, 2020.

\bibitem{Boffi2013}
D.~Boffi, F.~Brezzi, and M.~Fortin.
\newblock {\em Mixed Finite Element Methods and Applications}.
\newblock Springer, 2013.

\bibitem{Boussinesq1903}
J.~Boussinesq.
\newblock {\em Th\'{e}orie Analytique de la Chaleur}.
\newblock Gauthier--Villars, Paris, 1903.

\bibitem{Bulicek2009}
M.~Bul\'{i}\v{c}ek, E.~Feireisl, and J.~M\'{a}lek.
\newblock A {N}avier-{S}tokes-{F}ourier system for incompressible fluids with
  temperature dependent material coefficients.
\newblock {\em Nonlinear Anal. Real World Appl.}, 10:992--1015, 2009.

\bibitem{Bulicek:2009}
M.~Bul\'{i}\v{c}ek, P.~Gwiazda, J.~M\'{a}lek, and
  A.~{\'{S}wierczewska-Gwiazda}.
\newblock {O}n steady flows of incompressible fluids with implicit
  power-law-like rheology.
\newblock {\em Adv. Calc. Var.}, 2:109--136, 2009.

\bibitem{Bulicek:2012}
M.~Bul\'{i}\v{c}ek, P.~Gwiazda, J.~M\'{a}lek, and
  A.~{\'{S}wierczewska-{G}wiazda}.
\newblock {O}n unsteady flows of implicitly constituted incompressible fluids.
\newblock {\em SIAM J. Math. Anal.}, 44(4):2756--2801, 2012.

\bibitem{Bulicek2009a}
M.~Bul\'{i}\v{c}ek, J.~M\'{a}lek, and K.~R. Rajagopal.
\newblock Mathematical analysis of unsteady flows of fluids with pressure,
  shear-rate, and temperature dependent material moduli that slip at solid
  boundaries.
\newblock {\em SIAM J. Math. Anal.}, 41(2):665--707, 2009.

\bibitem{Burman2008}
E.~Burman and A.~Linke.
\newblock Stabilized finite element schemes for incompressible flow using
  {S}cott--{V}ogelius elements.
\newblock {\em Appl. Numer. Math.}, 58(11):1704--1719, 2008.

\bibitem{Casado-Diaz}
J.~Casado-D\'{i}az, T.~Chac\'{o}n-Rebollo, V.~Girault, M.~G\'{o}mez-M\'{a}rmol,
  and F.~Murat.
\newblock {Finite elements approximation of second order linear elliptic
  equations in divergence form with right-hand side in {$L^1$}}.
\newblock {\em Numer. Math.}, 105:337--374, 2007.

\bibitem{Crouzeix1973}
M.~Crouzeix and P.~A. Raviart.
\newblock {Conforming and nonconforming finite element methods for solving the
  stationary Stokes equations I}.
\newblock {\em ESAIM: M2AN}, pages 33--75, 1973.

\bibitem{Diening:2013}
L.~Diening, C.~Kreuzer, and E.~S\"{u}li.
\newblock {F}inite element approximation of steady flows of incompressible
  fluids with implicit power-law-like rheology.
\newblock {\em SIAM J. Numer. Anal.}, 51(2):984--1015, 2013.

\bibitem{Douglas1976}
J.~Douglas and T.~Dupont.
\newblock Interior penalty procedures for elliptic and parabolic {G}alerkin
  methods.
\newblock {\em Computing Methods in Applied Sciences. Lecture Notes in
  Physics}, vol. 58, 1976.

\bibitem{Elman2006}
H.~Elman, V.~E. Howle, J.~Shadid, R.~Shuttleworth, and R.~Tuminaro.
\newblock Block preconditioners based on approximate commutators.
\newblock {\em SIAM J. Sci. Comput.}, 27(5):1651--1668, 2006.

\bibitem{Elman2014}
H.~C. Elman, D.~J. Silvester, and A.~J. Wathen.
\newblock {\em Finite Elements and Fast Iterative Solvers: With Applications in
  Incompressible Fluid Dynamics}.
\newblock Oxford University Press, second edition, 2014.

\bibitem{Emery1999}
A.~F. Emery and J.~W. Lee.
\newblock The effects of property variations on natural convection in a square
  enclosure.
\newblock {\em J. Heat Transfer}, 121:57--62, 1999.

\bibitem{FarrellGazca2019}
P.~E. Farrell and P.~A. Gazca-Orozco.
\newblock {A}ugmented {L}agrangian preconditioner for implicitly-constituted
  non-{N}ewtonian incompressible flow.
\newblock {\em SIAM J. Sci. Comput.}, 42(6):B1329--B1349, 2020.

\bibitem{Farrell2019}
P.~E. Farrell, P.~A. Gazca-Orozco, and E.~S\"{u}li.
\newblock Numerical analysis of unsteady implicitly constituted incompressible
  fluids: 3-field formulation.
\newblock {\em SIAM J. Numer. Anal.}, 58(1):757--787, 2020.

\bibitem{Farrell}
P.~E. Farrell, M.~G. Knepley, L.~E. Mitchell, and F.~Wechsung.
\newblock {PCPATCH}: software for the topological construction of multigrid
  relaxation methods.
\newblock {\em ArXiv Preprint: 1912.08516}, 2019.
\newblock In review.

\bibitem{Farrell2020}
P.~E. Farrell, L.~Mitchell, L.~R. Scott, and F.~Wechsung.
\newblock {A} {R}eynolds-robust preconditioner for the {R}eynolds-robust
  {S}cott--{V}ogelius discretization of the stationary incompressible
  {N}avier--{S}tokes equations.
\newblock {\em ArXiv Preprint: 2004.09398}, 2020.

\bibitem{Farrella}
P.~E. Farrell, L.~Mitchell, L.~R. Scott, and F.~Wechsung.
\newblock Robust multigrid methods for nearly incompressible elasticity using
  macro elements.
\newblock {\em ArXiv Preprint: 2002.02051}, 2020.

\bibitem{Farrell2019a}
P.~E. Farrell, L.~Mitchell, and F.~Wechsung.
\newblock An augmented {L}agrangian preconditioner for the 3{D} stationary
  incompressible {N}avier--{S}tokes equations at high {R}eynolds number.
\newblock {\em SIAM J. Sci. Comput.}, 41(5):A3073--A3096, 2019.

\bibitem{Ferro2002}
S.~Ferro and G.~Gnavi.
\newblock Effects of temperature-dependent viscosity in channels with porous
  walls.
\newblock {\em Phys. Fluids}, 14(2):839--849, 2002.

\bibitem{Girault2015}
V.~Girault, R.~H. Nochetto, and L.~R. Scott.
\newblock Max-norm estimates for {S}tokes and {N}avier--{S}tokes approximations
  in convex polyhedra.
\newblock {\em Numer. Math.}, 131(4):771--882, 2015.

\bibitem{Girault1986}
V.~Girault and P.~A. Raviart.
\newblock {\em Finite Element Methods for Navier-Stokes Equations: Theory and
  Algorithms}.
\newblock Springer Verlag, 1986.

\bibitem{Grinevich2009}
P.~P. Grinevich and M.~A. Olshanskii.
\newblock An iterative method for the {S}tokes-type problem with variable
  viscosity.
\newblock {\em SIAM J. Sci. Comput.}, 31(5):3959--3978, 2009.

\bibitem{Hewitt1975}
F.~M. Hewitt, D.~P. McKenzie, and N.~O. Weiss.
\newblock Dissipative heating in convective flows.
\newblock {\em J. Fluid Mech.}, 68(4):721--738, 1975.

\bibitem{Howle2012}
V.~E. Howle and R.~C. Kirby.
\newblock Block preconditioners for finite element discretization of
  incompressible flow with thermal convection.
\newblock {\em Numer. Linear Algebra Appl.}, 19(2):427--440, 2012.

\bibitem{Hughes1986}
T.~J.~R. Hughes, L.~P. Franca, and M.~Balestra.
\newblock A new finite element formulation for computational fluid dynamics.
V. Circumventing the {B}abu\v{s}ka--{B}rezzi condition: a stable
{P}etrov--{G}alerkin formulation of the {S}tokes problem accommodating
equal-order interpolations.
\newblock {\em Comput. Methods Appl. Mech. Eng.}, 59(1):85--99, 1986.

\bibitem{John2017}
V.~John, A.~Linke, C.~Merdon, M.~Neilan, and L.~G. Rebholz.
\newblock {On the Divergence Constraint in Mixed Finite Element Methods for
  Incompressible Flows}.
\newblock {\em SIAM Rev.}, 59(3):492--544, 2017.

\bibitem{Kagei2000}
Y.~Kagei, M.~R\r{u}\v{z}i\v{c}ka, and G.~Th\"{a}ter.
\newblock Natural convection with dissipative heating.
\newblock {\em Commun. Math. Phys.}, 214:287--313, 2000.

\bibitem{Kay:2006}
D.~Kay, D.~Loghin, and A.~Wathen.
\newblock {A} preconditioner for the steady-state {N}avier--{S}tokes equations.
\newblock {\em SIAM J. Sci. Comput.}, 24(1):237--256, 2002.

\bibitem{Ke2017}
G.~Ke, E.~Aulisa, G.~Bornia, and V.~Howle.
\newblock Block triangular preconditioners for linearization schemes of the
  {R}ayleigh--{B}\'{e}nard convection problem.
\newblock {\em Numer. Linear Algebra Appl.}, 24(5):e2096, 2017.

\bibitem{Ke2018}
G.~Ke, E.~Aulisa, G.~Dillon, and V.~Howle.
\newblock Augmented {L}agrangian-based preconditioners for steady buoyancy
  driven flow.
\newblock {\em Appl. Math. Lett.}, 82:1--7, 2018.

\bibitem{refId0}
C.~Kreuzer and E.~S\"{u}li.
\newblock Adaptive finite element approximation of steady flows of
  incompressible fluids with implicit power-law-like rheology.
\newblock {\em ESAIM: M2AN}, 50(5):1333--1369, 2016.

\bibitem{Lederer2017}
P.~L. Lederer, A.~Linke, and J.~Merdon, C.~Sch\"{o}berl.
\newblock Divergence-free reconstruction operators for pressure-robust {S}tokes
  discretizations with continuous pressure finite elements.
\newblock {\em SIAM J. Numer. Anal.}, 55(3):1291--1314, 2017.

\bibitem{LWXZ:2007}
Y.~J. Lee, J.~Wu, J.~Xu, and L.~Zikatanov.
\newblock {R}obust subspace correction methods for nearly singular systems.
\newblock {\em Math. Models Methods Appl. Sci.}, 17(11):1937--196, 2007.

\bibitem{Linke2014}
A.~Linke.
\newblock On the role of the {H}elmholtz decomposition in mixed methods for
  incompressible flows and a new variational crime.
\newblock {\em Comput. Methods Appl. Mech. Engrg.}, 268:782--800, 2014.

\bibitem{Linke2016}
A.~Linke, G.~Matthies, and L.~Tobiska.
\newblock Robust arbitrary order mixed finite element methods for the
  incompressible {S}tokes equations with pressure independent velocity errors.
\newblock {\em ESAIM: M2AN}, 50(1):289--309, 2016.

\bibitem{Linke2016a}
A.~Linke and C.~Merdon.
\newblock Pressure-robustness and discrete {H}elmholtz projectors in mixed
  finite element methods for the incompressible {N}avier--{S}tokes equations.
\newblock {\em Comput. Methods Appl. Mech. Engrg.}, 311:304--326, 2016.

\bibitem{Linke2017}
A. Linke, C.~Merdon, and W.~Wollner.
\newblock Optimal {$L^2$} velocity error estimate for a modified
  pressure-robust {C}rouzeix--{R}aviart {S}tokes element.
\newblock {\em IMA J. Numer. Anal.}, 37:354--374, 2017.

\bibitem{Mardal2011}
K.~{-A.} Mardal and R.~Winther.
\newblock Preconditioning discretizations of systems of partial differential
  equations.
\newblock {\em Numer. Linear Algebr.}, 18(1):1--40, 2011.

\bibitem{Maringova2018}
E.~Maringov\'{a} and J.~\v{Z}abensk\'{y}.
\newblock On a {N}avier--{S}tokes--{F}ourier-like system capturing transitions
  between viscous and inviscid fluid regimes and between no-slip and
  perfect-slip boundary conditions.
\newblock {\em Nonlinear Anal. Real World Appl.}, 41:152--178, 2018.

\bibitem{Necas2001}
J.~Ne\v{c}as and T.~Roubi\v{c}ek.
\newblock Buoyancy-driven viscous flow with {$L^1$}-data.
\newblock {\em Nonlinear Anal.}, 46:737--755, 2001.

\bibitem{Oberbeck1879}
A.~Oberbeck.
\newblock \"{U}ber die {W}\"{a}rmeleitung der {F}l\"{u}ssigkeiten bei der
  {B}er\"{u}cksichtigung der {S}tr\"{o}mungen infolge von
  {T}emperaturdifferenzen.
\newblock {\em Ann. Phys.}, 243(6):271--292, 1879.

\bibitem{Ostrach1958}
S.~Ostrach.
\newblock {\em Internal viscous flows with body forces}, pages 185--208.
\newblock Grenzschictsforschung. Springer Verlag, 1958.

\bibitem{Qin1994}
J.~Qin.
\newblock {\em On the convergence of some low order mixed finite elements for
  incompressible fluids}.
\newblock PhD thesis, Pennsylvania State University, 1994.

\bibitem{Rajagopal:2003}
K.~R. Rajagopal.
\newblock On implicit constitutive theories.
\newblock {\em Appl. Math.}, 48(4):279--319, 2003.

\bibitem{Rajagopal2006}
K.~R. Rajagopal.
\newblock {On implicit constitutive theories for fluids}.
\newblock {\em J. Fluid Mech.}, 550:243--249, March 2006.

\bibitem{Rajagopal2008}
K.~R. Rajagopal and A.~R. Srinivasa.
\newblock {On the thermodynamics of fluids defined by implicit constitutive
  relations}.
\newblock {\em Z. Angew. Math. Phys}, 59:715--729, July 2008.

\bibitem{Rathgeber2016}
F.~Rathgeber, D.~A. Ham, L.~Mitchell, M.~Lange, F.~Luporini, A.~T.~T. Mcrae,
  G-T. Bercea, G.~R. Markall, and P.~H.~J. Kelly.
\newblock Firedrake: automating the finite element method by composing
  abstractions.
\newblock {\em ACM Trans. Math. Softw.}, 43(3), 2016.

\bibitem{Roubicek2001}
T.~Roubi\v{c}ek.
\newblock Steady-state buoyancy-driven viscous flow with measure data.
\newblock {\em Math. Bohem.}, 126(2):493--504, 2001.

\bibitem{Schoeberl:1999}
J.~Sch\"{o}berl.
\newblock {M}ultigrid methods for a parameter dependent problem in primal
  variables.
\newblock {\em Numer. Math.}, 84(1):97–119, 1999.

\bibitem{Schoeberl1999}
J.~Sch\"{o}berl.
\newblock {\em Robust Multigrid Methods for Parameter Dependent Problems}.
\newblock PhD thesis, Johannes Kepler Universit\"{a}t Linz, 1999.

\bibitem{Scott1985}
L.~R. Scott and M.~Vogelius.
\newblock {Norm estimates for a maximal right inverse of the divergence
  operator in spaces of piecewise polynomials}.
\newblock {\em Math. Modelling Numer. Anal.}, 19(1):111--143, 1985.

\bibitem{Silvester:1994}
D.~Silvester and A.~Wathen.
\newblock {F}ast iterative solution of stabilised {S}tokes systems. {P}art
  {II}: Using general block preconditioners.
\newblock {\em SIAM J. Numer. Anal.}, 31(5):1352--1367, 1994.

\bibitem{Sueli2018}
E.~S\"{u}li and T.~Tscherpel.
\newblock Fully discrete finite element approximation of unsteady flows of
  implicitly constituted incompressible fluids.
\newblock {\em IMA J. Numer. Anal.}, dry097, 2019.

\bibitem{Tscherpel2018}
T.~Tscherpel.
\newblock {\em {FEM for the Unsteady Flow of Implicitly Constituted
  Incompressible Fluids}}.
\newblock PhD thesis, University of Oxford, 2018.

\bibitem{Turcotte1974}
D.~L. Turcotte, A.~T. Hsui, K.~E. Torrance, and G.~Schubert.
\newblock Influence of viscous dissipation on {B}\'{e}nard convection.
\newblock {\em J. Fluid Mech.}, 64(2):369--374, 1974.

\bibitem{Velarde1976}
M.~G. Velarde and R.~P\'{e}rez-Cordon.
\newblock On the (non-linear) foundations of {B}oussinesq approximation
  applicable to a thin layer of fluid {II}: {V}iscous dissipation and large
  cell gap effects.
\newblock {\em J. de Physique}, 37(3):178--182, 1976.

\bibitem{Vinay2005}
G.~Vinay, A.~Wachs, and J-F. Agassant.
\newblock Numerical simulation of non-isothermal viscoplastic waxy crude oil
  flows.
\newblock {\em J. Non-Newtonian Fluid Mech.}, 128:144--162, 2005.

\bibitem{Xu1992}
J.~Xu.
\newblock Iterative methods by space decomposition and subspace correction.
\newblock {\em {SIAM} Review}, 34(4):581--613, 1992.

\bibitem{Xu2001}
J.~Xu.
\newblock The method of subspace corrections.
\newblock {\em J. Comput. Appl. Math.}, 128(1):335--362, 2001.

\bibitem{Zhang2005}
S.~Zhang.
\newblock A new family of stable mixed finite elements for the 3{D} {S}tokes
  equations.
\newblock {\em Math. Comput.}, 74(250):543--554, 2005.

\end{thebibliography}

\end{document}